\documentclass{article}

\usepackage[english]{babel}

\usepackage[letterpaper,top=2cm,bottom=2cm,left=3cm,right=3cm,marginparwidth=1.75cm]{geometry}

\usepackage[applemac]{inputenc} 		
\usepackage[T1]{fontenc}    		
\usepackage[colorlinks]{hyperref}      
\usepackage{url}            			
\usepackage{amsthm}
\usepackage{booktabs}       		
\usepackage{amsfonts}       		
\usepackage{amsmath}
\usepackage{wrapfig}
\usepackage{nicefrac}       		
\usepackage{microtype}      		
\usepackage{lipsum}				
\usepackage[square,numbers]{natbib}
\usepackage{mathtools}
\usepackage{algorithm}
\usepackage{algorithmicx}
\usepackage{algpseudocode}
\usepackage{graphicx}
\usepackage[most]{tcolorbox}
\usepackage{multicol,multirow}
\usepackage{indentfirst,latexsym,bm}
\usepackage{amsmath}
\usepackage{subfigure}
\usepackage{amssymb}
\usepackage{xcolor}
\usepackage{comment}
\usepackage{enumitem}
\usepackage{bbm}
\usepackage{tikz}
\usepackage{mdframed}
\usepackage{nicematrix}
\usepackage{bbding}
\usepackage{pifont}

\newcommand{\xmark}{\color{red}\ding{55}}
\newcommand{\cmark}{\color{black}\ding{51}}

\usetikzlibrary{arrows.meta,positioning}
\usepackage{pgfplots}
\pgfplotsset{compat=newest}
\usepgfplotslibrary{fillbetween}
\usetikzlibrary{shapes,decorations}
\usetikzlibrary{fit}

\colorlet{color1}{blue}
\colorlet{color2}{red!50!black}

\definecolor{ivory}{RGB}{218,215,203}

\definecolor{cuhkp}{RGB}{98,56,105} 	
\definecolor{cuhkpl}{RGB}{152,24,147} 	
\definecolor{cuhkb}{RGB}{219,160,1} 	
\definecolor{cuhkbd}{RGB}{178,129,0} 	
\definecolor{cuhkr}{RGB}{88,35,155}  	
\definecolor{blackp}{RGB}{0,0,0} 
\definecolor{redp}{RGB}{255,0,0}
\definecolor{orangep}{RGB}{255,128,0}
\definecolor{brownp}{RGB}{128,77,0}
\definecolor{yellowp}{RGB}{255,230,0}
\definecolor{greenp}{RGB}{128,230,0}
\definecolor{bluep}{RGB}{0,128,255}
\definecolor{purplep}{RGB}{152,24,147}
\definecolor{pinkp}{RGB}{230,0,128}
\definecolor{lavender}{rgb}{0.9, 0.9, 0.98}

\usepackage{hyperref}[6.83]

\hypersetup{
    colorlinks=true, 
    linkcolor=blue!80!black,  
    citecolor=blue!80!black, 
    urlcolor=magenta 
}

\RequirePackage[capitalize,nameinlink]{cleveref}

\crefformat{equation}{\textup{#2(#1)#3}}
\crefrangeformat{equation}{\textup{#3(#1)#4--#5(#2)#6}}
\crefmultiformat{equation}{\textup{#2(#1)#3}}{ and \textup{#2(#1)#3}}
{, \textup{#2(#1)#3}}{, and \textup{#2(#1)#3}}
\crefrangemultiformat{equation}{\textup{#3(#1)#4--#5(#2)#6}}%
{ and \textup{#3(#1)#4--#5(#2)#6}}{, \textup{#3(#1)#4--#5(#2)#6}}{, and \textup{#3(#1)#4--#5(#2)#6}}

\Crefformat{equation}{#2Equation~\textup{(#1)}#3}
\Crefrangeformat{equation}{Equations~\textup{#3(#1)#4--#5(#2)#6}}
\Crefmultiformat{equation}{Equations~\textup{#2(#1)#3}}{ and \textup{#2(#1)#3}}
{, \textup{#2(#1)#3}}{, and \textup{#2(#1)#3}}
\Crefrangemultiformat{equation}{Equations~\textup{#3(#1)#4--#5(#2)#6}}%
{ and \textup{#3(#1)#4--#5(#2)#6}}{, \textup{#3(#1)#4--#5(#2)#6}}{, and \textup{#3(#1)#4--#5(#2)#6}}

\crefdefaultlabelformat{#2\textup{#1}#3}

\theoremstyle{plain}
\newtheorem{theorem}{Theorem}[section]
\newtheorem{lemma}[theorem]{Lemma}

\newtheorem{proposition}[theorem]{Proposition}
\newtheorem{definition}[theorem]{Definition}

\theoremstyle{definition}

\newtheorem{example}[theorem]{Example}
\newtheorem{assumption}[theorem]{Assumption}
\crefname{assumption}{assumption}{assumptions}
\Crefname{assumption}{Assumption}{Assumptions}
\crefname{condition}{condition}{conditions}
\Crefname{condition}{Condition}{Conditions}

\theoremstyle{remark}

\newtheorem*{fact*}{Fact}

\DeclareMathOperator*{\argmin}{argmin}

\newcommand{\cl}{\operatorname{cl}}

\newcommand{\clip}{\mathcal{T}^\delta_\eta}

\newcommand{\cC}{\mathcal C}
\newcommand{\R}{\mathbb{R}}
\newcommand{\N}{\mathbb{N}}
\newcommand{\Rd}{\mathbb{R}^d}

\newcommand{\sL}{L}
\newcommand{\sG}{G}

\newcommand{\sH}{H}

\newcommand{\cO}{\mathcal O}

\newcommand{\cE}{\mathcal E}

\newcommand{\cZ}{\mathcal{Z}}
\newcommand{\cD}{\mathcal{D}_{h}}

\newcommand{\cM}{\mathcal{M}}
\newcommand{\cR}{\mathcal{R}}

\newcommand{\one}{\mathbf{1}}
\newcommand{\bv}{\mathbf{v}}
\newcommand{\bu}{\mathbf{u}}
\newcommand{\bs}{\mathbf{s}}
\newcommand{\bh}{\mathbf{h}}
\newcommand{\bx}{\mathbf{x}}
\newcommand{\by}{\mathbf{y}}
\newcommand{\bz}{\mathbf{z}}
\newcommand{\bff}{\mathbf{f}}

\newcommand{\dom}[1]{\mathrm{dom}(#1)}
\newcommand{\idom}[1]{\mathrm{int\ dom}(#1)}

\newcommand{\iprod}[2]{\langle #1, #2 \rangle}

\title{A New Kernel Regularity Condition for Distributed Mirror Descent: Broader Coverage and Simpler Analysis}
\author{Junwen Qiu\thanks{Industrial Systems Engineering and Management, National University of Singapore
  \\ Email: \{\texttt{jwqiu@nus.edu.sg,ziyangzeng@u.nus.edu,leileimei@u.nus.edu,junyuz@nus.edu.sg}\}} \and 
        Ziyang Zeng${}^*$ \and Leilei Mei${}^*$ \and Junyu Zhang${}^*$ }
\begin{document}
\maketitle
\begin{abstract}
Existing convergence analyses of distributed optimization methods in non-Euclidean geometries typically rely on kernel assumptions: (i) global Lipschitz smoothness and (ii)  bi-convexity of the associated Bregman divergence function. Unfortunately, these conditions are violated by \emph{nearly all} kernels used in practice, leaving a huge theory-practice gap. This work closes this gap by developing a unified analytical tool that guarantees convergence under mild conditions. Specifically, we introduce Hessian relative uniform continuity (HRUC), a regularity satisfied by nearly all standard kernels. Importantly, HRUC is closed under concatenation, positive scaling, composition, and various kernel combinations. Leveraging the geometric structure induced by HRUC, we derive convergence guarantees for mirror-descent-based gradient tracking without imposing any restrictive assumptions. More broadly, our analysis techniques extend seamlessly to other decentralized optimization methods in genuinely non-Euclidean and non-Lipschitz settings.
\end{abstract}

\textbf{Keywords:} Distributed mirror descent, kernel regularity, relative uniform continuity, gradient tracking, dual mixing

\section{Introduction}
\label{sec:intro}
In this paper, we consider the mirror-descent-based algorithms for the nonconvex distributed optimization problem:
\begin{equation}\label{eq:problem}
    \mathop{\rm minimize}_{x_1,\cdots,x_m\in\,\cZ}\;\; \frac1m\sum_{i=1}^m f_i(x_i)\quad \mbox{s.t.}\quad x_1 = \cdots = x_m.
\end{equation}
Each local objective function $f_i$ is continuously differentiable and possibly nonconvex, and we denote by $f = \frac{1}{m}\sum_i f_i$ the global objective function.
The set $\cZ\subseteq\mathbb{R}
^d$ is closed, convex, and is associated with a \emph{Legendre-type} Bregman distance generating kernel $h$.   
 
For problem \eqref{eq:problem}, the consensus mixing-based first-order methods such as distributed gradient descent ({\sf DGD}) \cite{Tsitsiklis1986Distributed,Nedic2009Distributed,Nedic2015Directed,Yuan2016DGD}, gradient tracking ({\sf GT}) \cite{Qu2018Harnessing}, and their numerous variants \cite{Shi2015EXTRA,Nedic2017DIGing,Scutari2019SONATA,Yuan2019ExactDiffusionI} form a major class of algorithms. Due to the potential to adapt to a problem's local geometry and the ability to handle general relatively smooth (non-Lipschitz smooth) problems \cite{bauschke2017descent,bolte2018first,lu2018relatively}, the mirror descent technique \cite{nemirovsky1983problem} has also been naturally introduced to this area. 
Unfortunately, despite being broadly studied, existing trials to introduce mirror descent to consensus mixing type algorithms typically require two restrictive conditions on kernel $h$: 
\begin{enumerate}[label=\textup{\textrm{(C.\arabic*)}},topsep=1ex,itemsep=1ex,partopsep=0ex,leftmargin=8ex]
\item \label[condition]{C1} \emph{Lipschitz Smoothness:} the mirror map $\nabla h$ is  Lipschitz continuous.  
\item \label[condition]{C2} \emph{Bi-Convexity:} the Bregman divergence $\cD(u,v)$ is convex in $v$. 
\end{enumerate}
\begin{table}[t]
\centering
{\footnotesize
\setlength{\tabcolsep}{5pt}
\renewcommand{\arraystretch}{1.15}
\NiceMatrixOptions{cell-space-limits=2pt}
\begin{NiceTabular}{|p{6cm}| p{1.3cm}|p{.95cm}p{.95cm}|p{4.5cm}|}%
 [ 
   code-before = 
    \rectanglecolor{green!5}{4-3}{5-4}
    \rectanglecolor{red!5}{6-3}{27-3}
    \rectanglecolor{red!5}{6-4}{27-4}
    \rectanglecolor{green!5}{4-5}{27-5}
 ]
\toprule
\Block{3-1}{\textbf{Kernels}} & \Block[c]{3-1}{\textbf{Domain}} &  \Block[c]{1-2}{\textbf{Joint Conditions}} & &  \Block[c]{3-1}{\qquad\; Distortion Modulus \qquad \;$\zeta(\delta)$ of \textbf{HRUC}} \\ \Hline
& & \Block{2-1}{\labelcref{C1}} & \Block{2-1}{\labelcref{C2}}
 & \\
 & & & &    \\[1mm] \Hline
\Block{2-1}{\textbf{Euclidean kernel} \cite{banerjee2005clustering,lan2023policy} \\ $h(x)=\|x\|_A^2,\;A\succ 0$} & \Block{2-1}{$\Rd$} & \Block{2-1}{\cmark} & \Block{2-1}{\cmark} & \Block{2-1}{$0$}    \\
& & & &   \\ \Hline
\Block{2-1}{ \textbf{Boltzmann-Shannon entropy} \cite{banerjee2005clustering,lan2023policy} \\[1mm]$h(x)=\sum_{i=1}^d x_{(i)}\log (x_{(i)}) - x_{(i)}$} & \Block{2-1}{$\Rd_+$} & \Block{2-1}{\xmark} & \Block{2-1}{\cmark} & \Block{2-1}{$\exp(\delta)-1$}   \\
& & & &   \\ \Hline
\Block{2-1}{\textbf{General $h\in{\cal C}^2$} \cite{rodomanov2021greedy}\\[1mm]$\mu$-strongly convex and $\rho$-Lipschitz Hessian 
} & \Block{2-1}{$\Rd$} & \Block{2-1}{\xmark} & \Block{2-1}{\xmark} & \Block{2-1}{$\rho\delta/\mu^2$}     \\
& & & &   \\ \Hline
\Block{2-1}{\textbf{Power kernel} \cite{bolte2018first,zhang2024stochastic} \\[1mm] $h(x)=\frac{\mu}{2}\|x\|^2 + \frac{\|x\|^{r+2}}{r+2},\; r>0$} & \Block{2-1}{$\Rd$} & \Block{2-1}{\xmark} & \Block{2-1}{\xmark} & \Block{2-1}{$\max\{10, r^2 2^{r+1}\}\Big(\frac{\delta}{\mu^{1+1/r}}+\frac{\delta^r}{\mu^{1+r}}\Big)$}     \\
& & & &   \\ \Hline  
\Block{2-1}{\textbf{Regularized Tsallis  entropy} \cite{zimmert2021tsallis,zhan2023policy,lee2018sparse} \\[1mm] $h(x)=\frac{\mu}{2}\|x\|^2 + \frac{1-\sum_{i=1}^d x_{(i)}^q}{1-q},\;q\in(0,1)$} & \Block{2-1}{$\Delta_d$} & \Block{2-1}{\xmark} & \Block{2-1}{\xmark} & \Block{2-1}{$\exp\Big(q^{\frac{q-3}{2-q}}\mu^{\frac{q-1}{2-q}}\cdot \delta\Big)-1$}     \\
& & & &   \\ \Hline
\Block{2-1}{\textbf{Regularized Burg's  entropy} \cite{bauschke2017descent} \\[1mm] $h(x)=\frac{\mu}{2}\|x\|^2 - \sum_{i=1}^d \log(x_{(i)})$} & \Block{2-1}{$\Rd_{++}$} & \Block{2-1}{\xmark} & \Block{2-1}{\xmark} & \Block{2-1}{$\exp(\delta/\sqrt{\mu})-1$}     \\
& & & &   \\ \Hline
\Block{2-1}{\textbf{Exponential  entropy} \cite{raskutti2015information} \\[1mm] $h(x)=\frac{\mu}{2}\|x\|^2 + \sum_{i=1}^d \exp(x_{(i)})$} & \Block{2-1}{$\Rd$} & \Block{2-1}{\xmark} & \Block{2-1}{\xmark} & \Block{2-1}{$\exp(\delta/\mu)-1$}     \\
& & & &   \\ \Hline
\Block{2-1}{\textbf{Norm exponential  kernel} \cite{Nesterov2018} \\[1mm] $h(x)=\exp\Big(\frac{\|x\|^2}{2}\Big)$} & \Block{2-1}{$\Rd$} & \Block{2-1}{\xmark} & \Block{2-1}{\xmark} & \Block{2-1}{$\exp(2\delta)-1$}     \\
& & & &   \\ \Hline
\Block{2-1}{\textbf{Generalized harmonic sum} \cite{denhertog1994inverse} \\[1mm] $h(x)=\frac{\mu}{2}\|x\|^2 + \sum_{i=1}^d x_{(i)}^{-p},\;p>0$} & \Block{2-1}{$\Rd_{++}$} & \Block{2-1}{\xmark} & \Block{2-1}{\xmark} & \Block{2-1}{$\exp\Big(\big[(p+1)^p/(p\mu^{p+1})\big]^{\frac{1}{p+2}}\delta\Big)-1$}     \\
& & & &   \\ \Hline
\Block{2-1}{\textbf{Hellinger function} \cite{tam2023bregman} \\[1mm] $h(x)=-\sum_{i=1}^d \sqrt{1-x_{(i)}^2}$} & \Block{2-1}{$[-1,+1]^d$} & \Block{2-1}{\xmark} & \Block{2-1}{\xmark} & \Block{2-1}{$\exp(3\delta/2)-1$}     \\
& & & &   \\ \Hline 
\Block{2-1}{\textbf{Self-concordant function} \cite{Nesterov2018} \\[1mm] $h$ is $\mu$-strongly convex and $M$-self-concordant} & \Block{2-1}{$\Rd$} & \Block{2-1}{\xmark} & \Block{2-1}{\xmark} & \Block{2-1}{$\exp(2M\delta/\sqrt{\mu})-1$}     \\
& & & &   \\ \Hline 
\bottomrule
\end{NiceTabular}
}
\vspace{1ex}
\caption{
Summary of common kernels, their domains, and which condition they satisfy. Here, we denote $x=[x_{(1)},\ldots,x_{(d)}]^\top \in \Rd$ to avoid confusion with the agent index subscripts. 
Although most kernels violate \ref{C1} and \ref{C2}, they all satisfy the HRUC regularity condition (\Cref{def:kernel cond}) with the associated function $\zeta(\cdot)$. Detailed proofs are provided in \cref{sec:HRUC of kernels}.}

\vspace{-3mm}
\label{table:kernels}
\end{table}

Here, the Bregman divergence is given by $\cD(u,v):=h(u)-h(v)-\langle \nabla h(v),u-v\rangle$. In particular, we call \ref{C2} bi-convexity because $\cD(u,v)$ is already convex in $u$ as the kernel $h$ is always convex in the context of mirror methods. The two conditions \ref{C1} and \ref{C2} are often imposed jointly, see \cite{fang2024gossip,farina2020randomized,li2016distributed,lu2021online,shahrampour2017distributed,suo2025distributed,wu2024stabilized,xiong2023event,yuan2018optimal,yuan2023differentially,zhou2025privacy} and references therein. However, as summarized in \Cref{table:kernels}, most of the kernels violate at least one of these two conditions, except for the classic (Euclidean) quadratic kernel $h(x)=\tfrac12\|x\|^2_A, A\succ 0$, which essentially limits the corresponding analysis to standard distributed algorithms with preconditioning.  This raises a central question: can we provide a theoretically sound approach to introduce the mirror descent techniques to the consensus mixing based distributed first-order algorithms?

Observe that most existing works commonly adopt a primal consensus mixing step in the algorithm design, see \cite[etc.]{li2016distributed,fang2024gossip,farina2020randomized,lu2021online,shahrampour2017distributed,suo2025distributed,wu2024stabilized,xiong2023event,yuan2018optimal,yuan2023differentially,zhou2025privacy}. In these works, given a mixing matrix $W$, an update direction $y_i^t$, and a step size $\eta_t$, the update for each local agent $i$ takes the form:
\begin{equation}   
\label{eq:primal-mixing}
    \nabla h(x_i^{t+1}) = \nabla h\Big({\sum}_{j=1}^m W_{(i,j)}\,x_j^{t}\Big) - \eta_t y_i^t,  
\end{equation}
which follows a mix-and-mirror (primal mixing) scheme. The condition \ref{C2} is essentially introduced to address the non-commutative issue between the nonlinear mirror map $\nabla h(\cdot)$ and the mixing operator $\sum_jW_{(i,j)}(\cdot)$ and, together with \ref{C1}, serves as a technical artifact for decreasing objective value and consensus error. To resolve these issues, we propose the following approaches, which also constitute the major contributions of this paper. \vspace{0.2cm}

\noindent\textbf{New regularity condition. } First, we introduce a new \emph{Hessian relative uniform continuity} (HRUC, \Cref{def:kernel cond}) regularity condition, which, instead of assuming the upper boundedness of $\nabla^2 h$, requires the kernel Hessian  not to explode too rapidly. Informally, HRUC assumes that the kernel Hessians are close in a relative sense:
\begin{equation} 
    \cR_h(x,y):=\Big\|\left[\nabla^2h(y)\right]^{-\frac12}\nabla^2h(x)\left[\nabla^2h(y)\right]^{-\frac12}-I\,\,\Big\|\leq \zeta(\delta) 
\end{equation}
for any $x,y$ that are close in the dual space $\|\nabla h(x)-\nabla h(y)\|\leq\delta$, for any $\delta>0$. The distortion modulus $\zeta$ controls the variation of $\nabla^2h$ and satisfies $\zeta(\delta)\to0$ as $\delta\to0$. Consequently, HRUC implies that the map $x\mapsto\log\!\big(\nabla^2h(x)\big)$\footnote{Here, $\log$ denotes the principal matrix logarithm: for an eigendecomposition $X=Q\operatorname{diag}(\lambda_1,\ldots,\lambda_d)Q^\top$ with $Q^\top Q=I$ and $\lambda_i>0$, $\log(X):=Q\operatorname{diag}(\log\lambda_1,\ldots,\log\lambda_d)Q^\top$ \cite[Chapter~11]{Higham2008}.} is \emph{uniformly continuous} in the dual space, i.e.,
\begin{equation*}
    \left\|\log\!\big(\nabla^2h(x)\big)-\log\!\big(\nabla^2h(y)\big)\right\|
    \leq \log\!\big(1+\zeta(\delta)\big)
    \qquad \text{when}\qquad \|\nabla h(x)-\nabla h(y)\|\leq\delta.
\end{equation*}
In sharp contrast to \ref{C1} and \ref{C2}, HRUC holds for nearly all kernels, as demonstrated in \Cref{table:kernels}. Therefore, HRUC is more of an intrinsic structural property of the mirror methods, rather than a pure technical artifact. Moreover, we show that HRUC is closed under concatenation, composition, and various combination, allowing us to synthesize problem-specific kernels by manipulating standard basic building blocks. 

In addition, we would like to emphasize that the significance of HRUC regularity is beyond the application to distributed optimization in this paper. One can observe that the vanilla (centralized) mirror descent is approximately a preconditioned gradient descent with a time-varying preconditioner: 
\[x^{t+1} = \nabla h^*(\nabla h(x^t)-\eta_t g^t) = x^t - \eta_t[\nabla^2h(x^t)]^{-1}g^t + o(\eta_t\|g^t\|).\]
A guaranteed smooth variation of kernel Hessian (the preconditioner) potentially enables the transformation of many well-developed analysis techniques of gradient methods to mirror descent by viewing the inverse Hessian as variable metrics, which we plan to explore in future works. \vspace{0.2cm}

\noindent\textbf{Dual consensus mixing. } Second, we develop a dual consensus mixing scheme: 
\begin{equation}
    \label{eq:dual-mixing}
    \nabla h(x_i^{t+1})  = {\sum}_{j=1}^m W_{(i,j)}\big( \nabla h(x_j^{t}) - \eta_t y_j^t \big),  
\end{equation}
which, in contrast to the mix-and-mirror update in \eqref{eq:primal-mixing}, follows a reverse mirror-and-mix update order. Interestingly, consider the average iterates in the dual space $\bar{z}^t = \frac{1}{m}\sum_i\nabla h(x_i^t)$ and $\bar{x}^t=\nabla h^*(\bar{z}^t)$, \eqref{eq:dual-mixing} can be viewed as approximately executing the mirror descent on the dual average point: 
$\nabla h(\bar{x}^{t+1}) = \nabla h(\bar{x}^t) - \eta_t\nabla f(\bar{x}^t) + \mathcal{E}_t$, while the error term $\mathcal{E}_t$ diminishes upon consensus. By directly exchanging the order of mirror and mixing operators in \eqref{eq:primal-mixing}, we avoid \ref{C2}. As an illustration, we adopt the {\sf GT} scheme for $y_i^t$ to track the gradient and design a Dual Mixing Gradient Tracking ({\sf DMGT}) method. \vspace{0.2cm}

\noindent\textbf{Broader convergence guarantees. } To broaden the coverage of our theory, we adopt a general relative smoothness assumption \cite{bauschke2017descent,lu2018relatively,bolte2018first} on the objective function, which relaxes the global Lipschitz smoothness and enables analyses of mirror descent for non-Lipschitz smooth objectives~\cite{bauschke2017descent,lu2018relatively,bolte2018first,latafat2022bregman,fatkhullin2024taming,zhang2024stochastic}. Then
leveraging the geometric structure induced by HRUC and dual mixing, we derive an $\cO(1/T)$ convergence for {\sf DMGT} in terms of the Bregman residual and consensus error. By avoiding \ref{C1} and \ref{C2}, our results generally hold for all kernels in \Cref{table:kernels}.
 
\subsection{Extended literature review}
In this subsection, we briefly discuss the related work on decentralized dual averaging ({\sf DDA}) methods~\cite{duchi2011dual} that have not been discussed in the previous section. {\sf DDA} exploits the geometry induced by $h$ via Nesterov's primal-dual (also known as dual averaging) method~\cite{nesterov2009primal}, and updates the local iterates by
\[
\nabla h(x_i^{t+1}) = {\sum}_{j=1}^m \frac{\eta_t W_{(i,j)}}{\eta_{t-1}}\,\nabla h(x_j^{t}) - \eta_t\nabla f_i(x_i^t),
\]
where we have made an equivalent reformulation of \cite{duchi2011dual} to keep a unified formalism with \eqref{eq:primal-mixing} and \eqref{eq:dual-mixing}. It can be observed that, under a constant step size, the {\sf DDA} and dual mixing scheme \eqref{eq:dual-mixing} 
have closely related dual
recursions but differ in whether mixing the local gradients, though neither {\sf DDA} nor dual mixing \eqref{eq:dual-mixing} necessarily takes constant step sizes. In addition, the convergence of {\sf DDA} and its variants is mostly studied under convex and Lipschitz smooth objective functions, e.g., \cite{duchi2011dual,hosseini2013online,tsianos2012consensus}, which also significantly deviate from our focus. To the best of our knowledge, \cite{liu2023rate} is the only analysis of the nonconvex {\sf DDA} method. Yet it is still limited to the Lipschitz smooth setting, and it requires additional strong restrictions on the kernel and initial point, which may not be satisfied by certain kernels such as Burg's entropy, and may contradict the conventional initialization of certain area such as Poisson inverse problem; see details in \cref{subsec:poisson inverse}. 

\subsection{Notations}  By convention, $\|\cdot\|$ represents the $\ell_2$-norm for a vector and the spectral norm for a matrix. $\|\cdot\|_F$ denotes the Frobenius norm for a matrix. Given a positive semidefinite matrix $H\succeq0$ and a vector $u$ with compatible sizes, we denote $\|u\|_H=\sqrt{u^\top H u}$. For a function $f$, we denote its domain by $\dom{f}$. We denote $\cl \dom{f}$, ${\rm bdry}\,\dom{f}$, and $\idom{f}$ the closure, boundary, and interior of $\dom{f}$, respectively. Let $\N:=\{0,1,2,\ldots\}$ denote the natural numbers. For any positive integer $m$, define $[m]:=\{1,\ldots,m\}$. For a vector $x\in\mathbb{R}^d$, we write $x_{(i)}$ for its $i$-th coordinate, $i\in[d]$. For a matrix $X\in\mathbb{R}^{m\times n}$, we write $X_{(i,j)}$ for its $(i,j)$ entry, $i\in[m]$, $j\in[n]$.
Given the collection of vectors $\{x_i\}_{i\in[m]}$, we write $\bx=[x_1^\top,\ldots,x_m^\top]^\top$ for the matrix that stacks $x_i$ vectors.

\section{Preliminaries and basic assumptions}
We briefly introduce a few preliminary concepts, and the basic assumptions used throughout this paper. 

\subsection{Mirror descent and relative smoothness} First, let us formalize the assumption on $\cZ$ and the associated kernel $h$ in the introduction.  
\begin{assumption}
    \label{assump:Legrendre}
    The set $\cZ$ is closed and convex. The associated kernel $h$ is twice continuously differentiable on $\idom{h}$ and \emph{Legendre}, and it satisfies $\mathcal{Z} = \cl\dom{h}$. 
\end{assumption} 
By \emph{Legendre}, we follow the standard definition in \cite{rockafellar1970convex}, which requires the kernel $h$ to be both \emph{essentially smooth} and \emph{essentially strictly convex}. That is, we require $\|\nabla h(x_n)\|\to+\infty$ if $x_n\to x\in{\rm bdry}\, \dom{h}$ and $h$ is strictly convex in $\dom{\nabla h}$. In addition, we make the following smoothness assumptions on the objective function. 

\begin{assumption}
    \label{assump:LowerBound}
    Each local objective function $f_i$, $i\in[m]$, is twice continuously differentiable on $\idom{h}$. The global objective function $f$ is lower bounded: $\inf_{x\in\cZ}f(x)\geq\underline{f}$.
\end{assumption}

\begin{assumption}
    \label{assump:Relative smoothness}
    Each local objective function $f_i$, $i\in[m]$, is smooth relative to $h$ with the modulus ${\sL}>0$ in the sense that both $\sL  h \pm f_i$ are convex on $\idom{h}$.  
\end{assumption}

The relative smoothness condition was originally introduced by \cite{bauschke2017descent,lu2018relatively} to handle objective functions that are not necessarily having globally Lipschitz continuous gradient. Under twice differentiability, it is equivalent to $-\sL \nabla^2 h(x)\preceq \nabla^2 f_i(x)\preceq \sL \nabla^2h(x)$ for any $x\in\idom{h}$. As a direct consequence of \Cref{assump:Relative smoothness}, we know the global objective function $f$ is also smooth relative to $h$ with modulus $\sL$. Based on this, the extended descent lemma \cite{bauschke2017descent,bolte2018first,lu2018relatively} holds for any $(x,y)\in \dom{h}\times \idom{h}$ that 
\begin{equation}\label{eq:extended descent}
\big|f(x) - f(y) - \iprod{\nabla f(y)}{x-y}\big| \leq \sL \cD(x,y), 
\end{equation}
leading to the (centralized) mirror descent for $f$ as a majorization minimization update\vspace{-0.2cm} 
\[
    x^+=\argmin_{z\in\cZ} \, f(x)+\nabla f(x)^\top (z-x) + \frac{1}{\alpha}\, \cD(z,x), \quad \text{with} \quad 0< \alpha \leq \tfrac1\sL. \]
Given \Cref{assump:Legrendre}, the essential smoothness and essential strict convexity imply that $x^+\in\idom{h}$. As a result, $z\in\cZ$ will be inactive in the above subproblem and the update can be written as $\nabla h(x^+) = \nabla h(x) - \alpha\nabla f(x)$.

\begin{example}
    Below, we list several common examples that satisfy \Crefrange{assump:Legrendre}{assump:Relative smoothness}.

\noindent $\bullet$ {(Poisson linear inverse problems \cite{bauschke2017descent})} For each agent $i\in[m]$, let $A_i\in\mathbb R_+^{n\times d}$ have rows $\alpha_{i,j}^\top$, $j\in[n]$, and let $b_i=(b_{i,1},\ldots,b_{i,n})^\top\in\mathbb R_+^n$ be the observation vector. The distributed Poisson linear inverse problem takes the form
    \[
        \min_{x\in\mathcal Z=\mathbb R_+^d}\ f(x):=\frac1m{\sum}_{i=1}^m f_i(x)
        \qquad \text{with} \qquad 
        f_i(x):={\sum}_{j=1}^n\big[\alpha_{i,j}^\top x-b_{i,j}\log (\alpha_{i,j}^\top x)\big].
    \]
    Each $f_i$ is twice continuously differentiable and convex on $\mathbb R_{++}^d$. Moreover, since $t\mapsto t-b\log t$ is bounded below on $t>0$ for every $b\geq0$, each $f_i$ is lower bounded. For any $\mu>0$, consider the regularized Burg's entropy
    $h(x):=\frac{\mu}{2}\|x\|^2-\sum_{k=1}^d\log x_{(k)}$, which is a Legendre kernel with $\cl\dom{h}=\mathcal Z$. By \cite[Lemma 7]{bauschke2017descent}, both $\sL h \pm f_i$ are convex when $\sL\geq\max_{i\in[m]}\|b_i\|_1$ for all $i\in[m]$. Therefore, \Crefrange{assump:Legrendre}{assump:Relative smoothness} hold.

\noindent $\bullet$ {(Quadratic inverse problems \cite{bolte2018first})} For each agent $i\in[m]$, let $A_{i,j}\in\mathbb R^{d\times d}$ be symmetric and $b_{i,j}\in\mathbb R$, $j\in[n]$. The quadratic inverse problem takes the form
    \[
        \min_{x\in\mathcal Z=\Rd}\ f(x):=\frac1m{\sum}_{i=1}^m f_i(x)
        \qquad \text{with} \qquad
        f_i(x):=\frac14{\sum}_{j=1}^n\big(x^\top A_{i,j}x-b_{i,j}\big)^2.
    \]
    Each $f_i$ is twice continuously differentiable and lower bounded. Consider the power kernel $h(x):=\frac14\|x\|^4+\frac12\|x\|^2$, which is a Legendre kernel satisfying $\cl\dom{h}=\mathcal Z$. The Hessian estimate in the proof of \cite[Lemma~5.1]{bolte2018first} shows that 
    \[
        -\sL\nabla^2h(x)\preceq\nabla^2f_i(x)\preceq\sL\nabla^2h(x)
        \quad\text{when}\quad
        \sL\geq\max_{i\in[m]}{\sum}_{j=1}^n
        \big(3\|A_{i,j}\|^2+\|A_{i,j}\||b_{i,j}|\big).
    \]
    Therefore, \Crefrange{assump:Legrendre}{assump:Relative smoothness} hold. Phase retrieval is a special case of this problem if $A_{i,j}=a_{i,j}a_{i,j}^\top$.

\noindent $\bullet$ {(Poisson rate estimation \cite{raskutti2015information})} For each agent $i\in[m]$, let $y_i=(y_{i,1},\ldots,y_{i,d})^\top\in\mathbb N^d$ be a vector of independent Poisson counts. The distributed negative log-likelihood is
    \[
        \min_{x\in \mathcal Z = \Rd}\ f(x):=\frac1m{\sum}_{i=1}^m f_i(x)
        \qquad \text{with} \qquad
        f_i(x):={\sum}_{k=1}^d\big(\exp(x_{(k)})-y_{i,k}x_{(k)}\big).
    \] 
    Each $f_i$ is twice continuously differentiable and convex. Since $t\mapsto \exp(t)-yt$ is bounded below on $\mathbb R$ for every $y\in\R_+$, each $f_i$ is also lower bounded. For any $\mu>0$, consider the regularized exponential kernel $h(x):=\frac{\mu}{2}\|x\|^2+\sum_{k=1}^d \exp(x_{(k)})$, which is a Legendre kernel with $\cl\dom{h}=\mathcal Z$. Moreover, it holds that $ \nabla^2(h-f_i)(x)=\mu I\succeq0$ and
    $$
        \nabla^2(h+f_i)(x)
        =\mu I+2\operatorname{diag}\big(\exp(x_{(1)}),\ldots,\exp(x_{(d)})\big)\succeq0.
    $$
    Therefore, \Crefrange{assump:Legrendre}{assump:Relative smoothness} hold with $\sL=1$.
\end{example}

\subsection{Consensus mixing} Let $W\in\R^{m\times m}_+$ be the mixing matrix, where $W_{(i,j)}>0$ if agent $i$ and $j$ are adjacent nodes in the communication graph. Let $\one \in \R^m$ be the vector of all ones, we make the following assumption on the matrix $W$. 

\begin{assumption}
	\label[assumption]{as:matrix}
    $W$ is symmetric, doubly stochastic, and $\rho:=\|W-\frac1m\one\one^\top\|<1$.  
\end{assumption}
As a consequence, we have $W\one=\one$, $\one^\top W = \one^\top$, and the following  contraction result. 
\begin{lemma}
    \label{lem:contraction}
     Given  \Cref{as:matrix}, and $\{u_i\}_{i\in[m]}$, $\{v_i\}_{i\in[m]}$, $\{v_i^+\}_{i\in[m]} \subset \Rd$.  Let $\bu,\bv,\bv^+\in\R^{m\times d}$ and $\bar u, \bar v, \bar v^+\in\Rd$ be their concatenated matrix and averaged vectors. Let $H,H^+\succ0$ be symmetric positive definite matrices. Assume that 
\[
\bv^+=W\bv+\bu,\qquad \text{and}\qquad \big\|[H^+]^{\frac12} H^{-\frac12}  \big\|^2\leq 1+\alpha\quad \text{with} \quad  \alpha\in\Big[0,\frac{1-\rho}{2}\Big].
\]
Then, 
    \[
    {\sum}_{i=1}^m\|v_i^+ - \bar v^+\|^2_{H^+} \leq \rho \cdot {\sum}_{i=1}^m\|v_i - \bar v\|^2_{H} + \frac{3}{1-\rho}\cdot {\sum}_{i=1}^m \|u_i\|^2_{H}.
    \]
\end{lemma}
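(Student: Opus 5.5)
The plan is to reduce everything to the Euclidean contraction of $W$ on consensus errors, then pay for the metric change from $H$ to $H^+$ with the factor $1+\alpha$. The weighted norm $\|\cdot\|_{H}$ is applied row-wise, so I will work with the matrices $\bv H^{1/2}$, $\bu H^{1/2}$, and so on. Let $J=\frac1m\one\one^\top$. Since $W$ is doubly stochastic, $(I-J)W=W-J=(W-J)(I-J)$. Hence
\[
(I-J)\bv^+=(W-J)(I-J)\bv+(I-J)\bu,
\]
and $\sum_i\|v_i^+-\bar v^+\|_{H^+}^2=\|(I-J)\bv^+[H^+]^{1/2}\|_F^2$. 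Right multiplication by a $d\times d$ matrix commutes with left multiplication by $m\times m$ matrices, which is what makes this matrix form convenient.

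\textbf{Step 1 (change of metric).} Write $(I-J)\bv^+[H^+]^{1/2}=(I-J)\bv^+H^{1/2}\cdot H^{-1/2}[H^+]^{1/2}$. The matrix $H^{-1/2}[H^+]^{1/2}$ is the transpose of $[H^+]^{1/2}H^{-1/2}$, so its spectral norm squared is at most $1+\alpha$. Using $\|AB\|_F\le\|A\|_F\|B\|$, this gives
\[
\sum_i\|v_i^+-\bar v^+\|_{H^+}^2\le(1+\alpha)\,\big\|(W-J)(I-J)\bv H^{1/2}+(I-J)\bu H^{1/2}\big\|_F^2 .
\]

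\textbf{Step 2 (Young plus contraction).} Apply $\|a+b\|^2\le(1+\beta)\|a\|^2+(1+1/\beta)\|b\|^2$. Then use $\|(W-J)X\|_F\le\rho\|X\|_F$ and $\|(I-J)\bu H^{1/2}\|_F^2\le\|\bu H^{1/2}\|_F^2=\sum_i\|u_i\|_H^2$, the latter because $I-J$ is a projection. This yields the coefficient $(1+\alpha)(1+\beta)\rho^2$ on $\sum_i\|v_i-\bar v\|_H^2$ and $(1+\alpha)(1+1/\beta)$ on $\sum_i\|u_i\|_H^2$.

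\textbf{Step 3 (choosing constants).} Take $\beta=\frac{1-\rho}{2\rho}$, or $\beta$ arbitrary if $\rho=0$, in which case the first term vanishes. Then $(1+\beta)\rho^2=\rho\frac{1+\rho}{2}$. Using $\alpha\le\frac{1-\rho}{2}$, the first coefficient is at most $\rho\frac{(3-\rho)(1+\rho)}{4}$. Since $(3-\rho)(1+\rho)=4-(1-\rho)^2\le4$, this is at most $\rho$. For the second coefficient, $1+1/\beta=\frac{1+\rho}{1-\rho}$, so $(1+\alpha)(1+1/\beta)\le\frac{(3-\rho)(1+\rho)}{2(1-\rho)}\le\frac{2}{1-\rho}\le\frac{3}{1-\rho}$.

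The only delicate point is the arithmetic in Step 3: we need $\beta$ to absorb the metric distortion $1+\alpha$ while keeping the contraction factor at $\rho$, not $\rho^2(1+\cdot)$ with a factor larger than $\rho$. The choice $\beta=(1-\rho)/(2\rho)$ together with the identity $(3-\rho)(1+\rho)=4-(1-\rho)^2$ handles this.
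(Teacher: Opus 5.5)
Your proposal is correct and follows the paper's proof essentially step for step: the identity $(I-J)W=(W-J)(I-J)$, factoring out $\|H^{-1/2}[H^+]^{1/2}\|^2\le 1+\alpha$, then a weighted Young inequality together with $\|W-J\|=\rho$ and $\|I-J\|=1$. The only difference is the Young weight. You take $\beta=(1-\rho)/(2\rho)$ where the paper takes $\varepsilon=(1-\rho)/(1+\rho)$, and your choice gives the slightly sharper constant $2/(1-\rho)$ instead of $3/(1-\rho)$ on the $\bu$ term.
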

For succinctness, we move the proof of the lemma to \cref{appendix: Lemma-contraction}.

\section{The Hessian relative uniform continuity regularity condition}  We invoke a non-Euclidean metric $\rho_h(\cdot\,,\cdot)$ to measure the proximity of points in $\idom{h}$, by incorporating their covectors in the dual space:
\begin{equation*} \rho_h(x,y):=\|\nabla h(x)-\nabla h(y)\|. \end{equation*}
The strict convexity of $h$ indicates that $\rho_h(x,y)=0\Leftrightarrow x=y$ and straightforward computation verifies the nonnegativity, symmetry, and triangle inequality. Thus, $\rho_h(\cdot\,,\cdot)$ indeed induces a distance. Under this distance, we provide the formal definition of the HRUC regularity condition. 

\begin{definition}[\textbf{HRUC Regularity}]
\label{def:kernel cond}
We say a kernel $h$ with positive definite Hessian on $\idom{h}$ is $\zeta$-HRUC regular with respect to a non-decreasing continuous \textbf{distortion modulus} $\zeta\!:\mathbb{R}_+\!\to\!\mathbb{R}_+$ with $\zeta(0)=0$, if 
\begin{equation}
    \label{eqn:HRUC}
  \cR_h(x,y):=  \left\|\big[\nabla^2 h(y)\big]^{-\frac12}\nabla^2 h(x)\big[\nabla^2 h(y)\big]^{-\frac12}-I\right\| \leq \zeta(\delta)
\end{equation}
for all $\delta\geq0$ and $x,y\in\idom{h}$ that satisfy $\rho_h(x,y)\leq \delta$.  
\end{definition}
Since the matrix in \eqref{eqn:HRUC} is symmetric, the Rayleigh theorem \cite[Theorem~4.2.2]{HornJohnson2012} implies
\begin{equation*}
    \cR_h(x,y)
    =\max_{u\neq0}\left|\frac{u^\top\nabla^2h(x)u}{u^\top\nabla^2h(y)u}-1\right|.
\end{equation*}
Thus, HRUC controls relative changes in the kernel Hessian through the distortion modulus $\zeta$. Since $\rho_h$ is symmetric, applying \eqref{eqn:HRUC} to both $(x,y)$ and $(y,x)$ yields the two-sided Hessian comparison:
\begin{equation}
    \label{eqn:HRUC-Hessian-comparison}
    \frac{1}{1+\zeta(\delta)}\nabla^2h(y)\preceq \nabla^2h(x)\preceq \big(1+\zeta(\delta)\big)\nabla^2h(y)\qquad \text{when}\qquad \rho_h(x,y)\leq\delta.
\end{equation} 
Let $A\preceq B$ denote the L\"owner order, i.e., $B-A\succeq0$. The operator monotonicity of the matrix logarithm \cite[Theorem~V.4.7]{Bhatia1997} implies that it preserves the L\"owner order: if $0\prec A\preceq B$, then $\log(A)\preceq\log(B)$. Applying this property to \eqref{eqn:HRUC-Hessian-comparison} and using $\log(cX)=\log(c)I+\log(X)$ for $c>0$ and $X\succ0$ yields
\begin{equation*}
    \left\|\log\!\big(\nabla^2h(x)\big)-\log\!\big(\nabla^2h(y)\big)\right\|
    \leq \log\!\big(1+\zeta(\delta)\big)
    \qquad \text{when}\qquad \rho_h(x,y)\leq\delta.
\end{equation*}
Since $\zeta(\delta)\searrow 0$ as $\delta\searrow 0$, this inequality implies the matrix logarithm $\log \left(\nabla^2 h(\cdot)\right)$ is \emph{uniformly continuous} under the $\rho_h$-distance, which motivates the use of ``uniform continuity'' in the naming of HRUC.  
Geometrically, the corresponding local Hessian metrics differ by at most a factor of $1+\zeta(\delta)$. Note that logarithm can significantly suppress the growth, this reveals the intuition behind the wide applicability of HRUC regularity: if the kernel Hessian does not explode so rapidly that it fails to be uniformly continuous in logarithmic scale, it will possibly satisfy this condition. 

Next, we provide a few important properties for the HRUC-regular kernels.

\begin{proposition}[\textbf{Closedness under Concatenation}]
    \label{proposition:Concat}
    Let the kernel $h$ be \emph{block-separable} in the sense that $h(x) = \sum_{i=1}^mh_i(x_i)$ with $x = (x_{1},\cdots,x_m)$ partitioned into $m$ blocks  $x_i\in\mathbb{R}^{d_i}$. If each $h_i$ is $\zeta_i$-HRUC regular, then the kernel $h$ is $\zeta$-HRUC regular with $\zeta(\cdot):=\max_i\zeta_i(\cdot)$.
\end{proposition}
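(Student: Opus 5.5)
The plan is to exploit the block-diagonal structure of the Hessian of a block-separable kernel. First, $\dom{h}=\prod_i\dom{h_i}$, so $\idom{h}=\prod_i\idom{h_i}$. For $x=(x_1,\ldots,x_m)\in\idom{h}$ we have
\[
\nabla h(x)=\big(\nabla h_1(x_1),\ldots,\nabla h_m(x_m)\big),\qquad \nabla^2h(x)=\operatorname{blkdiag}\big(\nabla^2h_1(x_1),\ldots,\nabla^2h_m(x_m)\big).
\]
Each diagonal block is positive definite, so $\nabla^2 h(x)$ is positive definite as well. Its inverse square root is again block diagonal, with blocks $[\nabla^2 h_i(y_i)]^{-1/2}$. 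Consequently
\[
\big[\nabla^2 h(y)\big]^{-\frac12}\nabla^2 h(x)\big[\nabla^2 h(y)\big]^{-\frac12}-I=\operatorname{blkdiag}\Big(\big[\nabla^2 h_i(y_i)\big]^{-\frac12}\nabla^2 h_i(x_i)\big[\nabla^2 h_i(y_i)\big]^{-\frac12}-I_{d_i}\Big)_{i\in[m]}.
\]
The spectral norm of a block-diagonal symmetric matrix is the maximum of the spectral norms of its blocks. This gives $\cR_h(x,y)=\max_i \cR_{h_i}(x_i,y_i)$.

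Next, fix $\delta\ge0$ and take $x,y\in\idom{h}$ with $\rho_h(x,y)\le\delta$. Since
\[
\rho_h(x,y)^2=\sum_{i}\|\nabla h_i(x_i)-\nabla h_i(y_i)\|^2,
\]
each block satisfies $\rho_{h_i}(x_i,y_i)\le\rho_h(x,y)\le\delta$. Applying the $\zeta_i$-HRUC regularity of $h_i$ gives $\cR_{h_i}(x_i,y_i)\le\zeta_i(\delta)$. Taking the maximum over $i$ yields $\cR_h(x,y)\le\max_i\zeta_i(\delta)=\zeta(\delta)$.

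Finally, we must check that $\zeta=\max_i\zeta_i$ is an admissible distortion modulus. It maps $\R_+$ to $\R_+$ and satisfies $\zeta(0)=\max_i\zeta_i(0)=0$. It is non-decreasing because it is a pointwise maximum of non-decreasing functions. It is continuous because it is the maximum of finitely many continuous functions.

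There is no serious obstacle here. The only points needing care are two identities: the domain/interior product identity, and the spectral-norm identity for block-diagonal matrices. The latter follows from the Rayleigh characterization stated after \Cref{def:kernel cond}: a maximizing vector can be decomposed blockwise, and the ratio is bounded by the worst block.
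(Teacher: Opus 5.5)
Your proof is correct and is the straightforward argument the paper has in mind; the paper omits the proof. The block-diagonal Hessian gives $\cR_h(x,y)=\max_i\cR_{h_i}(x_i,y_i)$, the same reduction the paper uses for separable kernels, and $\rho_{h_i}(x_i,y_i)\le\rho_h(x,y)$ lets you invoke each $\zeta_i$-HRUC bound; your check that $\max_i\zeta_i$ is an admissible distortion modulus fills in a detail the paper leaves implicit.
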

In practice, many kernels are actually \emph{coordinate-separable} in the sense that each block $x_i\in\mathbb{R}$ is 1-dimensional. The proof of this proposition is straightforward and is hence omitted. Next we present the closedness of HRUC regularity under composition. 
\begin{proposition}[\textbf{Closedness under Composition}]
    \label{proposition:Affine} 
    Let the kernel $h$ be $\zeta$-HRUC regular, and let $A\in\mathbb{R}^{d\times d}$ be a non-singular matrix, $b\in\mathbb{R}^d$ be an arbitrary vector, $c>0$ be a positive scalar, then the kernel $\varphi(\cdot):=ch(A\cdot+b)$ is $\zeta\big(\,\cdot/[c\sigma_{\min}(A)]\big)$-HRUC on its domain, where $\sigma_{\min}(A)>0$ denotes the smallest singular value of $A$.
\end{proposition}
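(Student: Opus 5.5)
The plan is to compute the gradient and Hessian of $\varphi$ by the chain rule, then reduce both the dual distance $\rho_\varphi$ and the relative Hessian distortion $\cR_\varphi$ to the corresponding quantities for $h$ evaluated at the transformed points. Write $T(x):=Ax+b$. Since $A$ is non-singular, $T$ is an affine bijection, so $\idom{\varphi}=T^{-1}(\idom{h})$. Moreover,
\[
\nabla\varphi(x)=cA^\top\nabla h(T(x)),\qquad \nabla^2\varphi(x)=cA^\top\nabla^2h(T(x))A ,
\]
and the latter is positive definite on $\idom{\varphi}$. Fix $x,y\in\idom{\varphi}$ and set $X:=T(x)$, $Y:=T(y)$, both of which lie in $\idom{h}$.

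\textbf{Step 1: the distances.} We have $\rho_\varphi(x,y)=c\|A^\top(\nabla h(X)-\nabla h(Y))\|$. Because $\sigma_{\min}(A^\top)=\sigma_{\min}(A)$, this gives
\[
\rho_\varphi(x,y)\geq c\,\sigma_{\min}(A)\,\rho_h(X,Y).
\]
Hence $\rho_\varphi(x,y)\leq\delta$ implies $\rho_h(X,Y)\leq \delta/[c\sigma_{\min}(A)]$.

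\textbf{Step 2: invariance of $\cR$.} Use the Rayleigh-quotient form of $\cR$ stated just after \Cref{def:kernel cond}:
\[
\cR_\varphi(x,y)=\max_{u\neq0}\left|\frac{u^\top cA^\top\nabla^2h(X)Au}{u^\top cA^\top\nabla^2h(Y)Au}-1\right|.
\]
The factor $c$ cancels. Substituting $v=Au$, which ranges over all nonzero vectors because $A$ is non-singular, shows that $\cR_\varphi(x,y)=\cR_h(X,Y)$. This is the step that calls for the most care. Working with the matrix form directly would require handling $[A^\top H A]^{-1/2}$, which is awkward. Expressing $\cR$ through Rayleigh quotients avoids this, so I will rely on that formulation.

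\textbf{Conclusion.} By $\zeta$-HRUC regularity of $h$ and Step 1, $\cR_h(X,Y)\leq\zeta\big(\delta/[c\sigma_{\min}(A)]\big)$. Combined with Step 2, this is the claimed bound for $\cR_\varphi(x,y)$. It remains to check that the new modulus $\delta\mapsto\zeta(\delta/[c\sigma_{\min}(A)])$ is admissible. It is non-decreasing and continuous, and it vanishes at $0$, because precomposing with a positive linear scaling preserves all three properties.
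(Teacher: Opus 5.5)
Your proposal is correct and follows the paper's proof: the same chain-rule formulas, the same bound $\rho_\varphi(x,y)\geq c\,\sigma_{\min}(A)\,\rho_h(Ax+b,Ay+b)$, and the same identity $\cR_\varphi(x,y)=\cR_h(Ax+b,Ay+b)$. The only difference is cosmetic. You prove that identity through the Rayleigh-quotient form with the substitution $v=Au$, whereas the paper conjugates by the orthogonal matrix $Q=H_y^{1/2}A(A^\top H_yA)^{-1/2}$, and both arguments are valid.
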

\begin{proof}
    First,  
    we have $\nabla \varphi(x) = cA^\top\nabla h(Ax+b)$ and $\nabla^2\varphi(x) = cA^\top\nabla^2h(Ax+b)A$. For brevity, let $H_x:=\nabla^2h(Ax+b)$ and $H_y:=\nabla^2h(Ay+b)$. Then it is straightforward to check that 
    \begin{equation*}
    \begin{aligned}
        \cR_\varphi(x,y)
        &=\Big\|\big[A^\top H_yA\big]^{-\frac12}A^\top(H_x-H_y)A\big[A^\top H_yA\big]^{-\frac12}\Big\|\\
        &=\Big\|H_y^{-\frac12}(H_x-H_y)H_y^{-\frac12}\Big\|
        =\cR_h(Ax+b,Ay+b),
    \end{aligned}
    \end{equation*}
    where the second equality follows by setting $Q:=H_y^{\frac12}A(A^\top H_yA)^{-\frac12}$, for which $Q^\top Q=I$.
     For any $\delta\geq0$, $\rho_\varphi(x,y)\leq \delta$ indicates that
     \[
     c\sigma_{\min}(A)\rho_h(Ax+b,Ay+b)
     \leq \rho_\varphi(x,y)\leq\delta
     \quad\Longrightarrow\quad
     \rho_h(Ax+b,Ay+b)\leq\frac{\delta}{c\sigma_{\min}(A)}.
     \]
     Together with the $\zeta$-HRUC regularity of $h$, we prove
     $\cR_\varphi(x,y) \leq \zeta\big(\delta/[c\sigma_{\min}(A)]\big)$.
\end{proof}

Next, we discuss the closedness of HRUC regularity under additive combination. 

\begin{proposition}[\textbf{Closedness under Combination}]
    \label{proposition:Affine 1} 
    Let the kernels $h$ and $g$ be $\zeta_h$- and $\zeta_g$-HRUC regular, respectively. Let $\varphi:= h+ g$, then it holds that  \vspace{0.1cm}\\
    \noindent\emph{(i).} $\,\,\,\varphi$ is $\zeta_h$-HRUC if $g(x) = \|x\|^2/2$.\vspace{0.1cm}\\
    \noindent\emph{(ii).} $\,\varphi$ is $\max\{\zeta_h,\zeta_g\}$-HRUC if both $h$ and $g$ are coordinate-separable.\vspace{0.1cm}\\
    \noindent\emph{(iii).} $\!\varphi$ is $\max\{\zeta_h,\zeta_g\}$-HRUC if the kernel pair $(h,g)$ is cross-monotone in the sense that 
    $\langle \nabla h(x)-\nabla h(y),  \nabla g(x)-\nabla g(y)\rangle \geq 0$,  for $x,y\in\dom{\nabla\varphi}$.
\end{proposition}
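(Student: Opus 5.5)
The plan is to derive a single pointwise inequality that handles all three cases, and then reduce (i) and (ii) to (iii). Throughout, fix $x,y\in\dom{\nabla\varphi}=\idom{h}\cap\idom{g}$ and $\delta\ge0$ with $\rho_\varphi(x,y)\le\delta$. Write $H_x:=\nabla^2h(x)$, $G_x:=\nabla^2 g(x)$, and similarly at $y$. Then $\nabla^2\varphi=H+G\succ0$ on this set.

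\textbf{Step 1 (Hessian part: mediant inequality).} I will use the Rayleigh characterization stated after \Cref{def:kernel cond}. Fix $u\neq0$ and set $a=u^\top H_xu$, $c=u^\top H_yu>0$, $b=u^\top G_xu$, $e=u^\top G_yu>0$. By that characterization, $|a-c|\le R_h\,c$ and $|b-e|\le R_g\,e$, where $R_h:=\cR_h(x,y)$ and $R_g:=\cR_g(x,y)$. Hence
\[
\Big|\frac{a+b}{c+e}-1\Big|\le\frac{|a-c|+|b-e|}{c+e}\le\frac{R_h c+R_g e}{c+e}\le\max\{R_h,R_g\}.
\]
Taking the maximum over $u$ gives $\cR_\varphi(x,y)\le\max\{\cR_h(x,y),\cR_g(x,y)\}$. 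It therefore suffices to show $\rho_h(x,y)\le\rho_\varphi(x,y)$ and $\rho_g(x,y)\le\rho_\varphi(x,y)$. Since $\zeta_h$ and $\zeta_g$ are non-decreasing, these give $\cR_h\le\zeta_h(\delta)$ and $\cR_g\le\zeta_g(\delta)$, which is the claim.

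\textbf{Step 2 (dual-distance part, case (iii)).} Let $\Delta_h:=\nabla h(x)-\nabla h(y)$ and $\Delta_g:=\nabla g(x)-\nabla g(y)$. Then
\[
\rho_\varphi(x,y)^2=\|\Delta_h+\Delta_g\|^2=\|\Delta_h\|^2+2\langle\Delta_h,\Delta_g\rangle+\|\Delta_g\|^2\ge\|\Delta_h\|^2+\|\Delta_g\|^2,
\]
using cross-monotonicity. Both required comparisons follow, which proves (iii).

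\textbf{Step 3 (reducing (i) and (ii)).} For (i), $g=\frac12\|\cdot\|^2$ is $0$-HRUC, since its Hessian is $I$ so $\cR_g\equiv0$, and $\max\{\zeta_h,0\}=\zeta_h$. Cross-monotonicity reads $\langle\nabla h(x)-\nabla h(y),x-y\rangle\ge0$, which is the monotonicity of the gradient of the convex function $h$. For (ii), write $h=\sum_k h_k(x_{(k)})$ and $g=\sum_k g_k(x_{(k)})$ with convex scalar $h_k,g_k$. Then
\[
\langle\Delta_h,\Delta_g\rangle=\sum_k\big(h_k'(x_{(k)})-h_k'(y_{(k)})\big)\big(g_k'(x_{(k)})-g_k'(y_{(k)})\big).
\]
Each term is a product of two numbers sharing the sign of $x_{(k)}-y_{(k)}$, because $h_k'$ and $g_k'$ are non-decreasing. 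So each term is nonnegative, cross-monotonicity holds, and (iii) applies.

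\textbf{Main obstacle.} The only nontrivial point is Step 1. $\cR_h$ is a spectral-norm quantity, and one might try to bound $\cR_\varphi$ through matrix square roots of $H+G$, which do not split additively. Passing to the Rayleigh quotient form avoids this, because there the mediant-type bound is elementary. The remaining steps are routine; the one care point is to evaluate everything on $\dom{\nabla\varphi}$, where $\nabla^2 h$ and $\nabla^2 g$ are both positive definite so all quotients are well defined.
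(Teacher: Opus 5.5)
Your proposal is correct and follows the paper's proof. It uses the same expansion $\rho_\varphi^2=\rho_h^2+\rho_g^2+2\langle\nabla h(x)-\nabla h(y),\nabla g(x)-\nabla g(y)\rangle$ with cross-monotonicity to get $\max\{\rho_h,\rho_g\}\le\rho_\varphi$, and it reduces (i) and (ii) to cross-monotone pairs, spelling out the verification the paper calls straightforward. The only variation is in the Hessian step: your Rayleigh-quotient mediant bound is the paper's diagonal case-(ii) computation extended to arbitrary $u$, and it gives the pointwise bound $\cR_\varphi(x,y)\le\max\{\cR_h(x,y),\cR_g(x,y)\}$ using HRUC only at the pair $(x,y)$. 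The paper's case (iii) instead sums the two-sided comparison \eqref{eqn:HRUC-Hessian-comparison}, which also invokes HRUC at $(y,x)$; both routes yield the same modulus.
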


In this proposition, both (i) and (ii) satisfy the cross-monotone assumption of (iii). However, we state (i) independently because most mirror descent literature requires the kernel $h$ to be strongly convex, see e.g.  \cite{bolte2018first,fatkhullin2024taming,zhang2024stochastic}. If needed, one can always add quadratic terms to the kernel without destroying HRUC. We also state (ii) independently because most of the kernels are indeed coordinate separable. 

\begin{proof}[Proof of \Cref{proposition:Affine 1}] First of all, through direct computation, we obtain 
\begin{equation*} 
\rho_\varphi^2(x,y) = \rho_h^2(x,y) + \rho_g^2(x,y) + 2\langle \nabla h(x)-\nabla h(y),  \nabla g(x)-\nabla g(y)\rangle. \end{equation*}
A direct consequence of the cross-monotonicity is that $\max\left\{\rho_h(x,y),\rho_g(x,y)\right\}\leq \delta$ when $\rho_\varphi(x,y)\leq\delta$. For example, in case (i) where $g$ is quadratic, and in case (ii) where both $h$ and $g$ are coordinate-separable, it is straightforward to verify the cross-monotonicity. Next, let us prove the three cases given $\max\left\{\rho_h(x,y),\rho_g(x,y)\right\}\leq \delta$.\vspace{0.1cm} 

\noindent\textbf{Case (ii).} $\,\,$ In the coordinate-separable case, denote $\nabla^2f(z) = \mathrm{Diag}(f_z^i)\succ0$ for $i\in[d]$, $z\in\{x,y\}$, and $f\in\{h,g\}$.  By cross-monotonicity,   
$\max_i\left\{\left|h_x^i/h_y^i-1\right|\right\}\leq \zeta_h(\delta)$ and  $\max_i\left\{\left|g_x^i/g_y^i-1\right|\right\}\leq \zeta_g(\delta)$. Then direct computation gives
\begin{equation*}
\begin{aligned}
     \cR_\varphi(x,y) &= \max_{i\in[d]}\left\{\left|\frac{h_x^i+g_x^i}{h_y^i+g_y^i}-1\right|\right\}  \\&= \max_{i\in[d]}\left\{\left|\frac{h_y^i(h_x^i/h_y^i-1)}{h_y^i+g_y^i}+\frac{g_y^i(g_x^i/g_y^i-1)}{h_y^i+g_y^i}\right|\right\} \leq \max\{\zeta_h(\delta),\zeta_g(\delta)\}.
\end{aligned}   
\end{equation*}
\noindent\textbf{Case (iii).} $\,\,$ Denote $H_z = \nabla^2h(z)\succ0$ and $G_z = \nabla^2g(z)\succ0$ for $z\in\{x,y\}$. Let $\zeta(\delta):=\max\{\zeta_h(\delta),\zeta_g(\delta)\}$. Applying the Hessian comparison \eqref{eqn:HRUC-Hessian-comparison}, we obtain
\begin{equation*}
\frac{1}{1+\zeta(\delta)}(H_y+G_y)
\preceq H_x+G_x
\preceq\big(1+\zeta(\delta)\big)(H_y+G_y).
\end{equation*}
Therefore, eigenvalues of $(H_y+G_y)^{-\frac12}(H_x+G_x)(H_y+G_y)^{-\frac12}$ lie in $[(1+\zeta(\delta))^{-1},1+\zeta(\delta)]$. Hence,
\begin{equation*}
\begin{aligned}
     \cR_\varphi(x,y)
     &=\left\|(H_y+G_y)^{-\frac12}(H_x+G_x)(H_y+G_y)^{-\frac12}-I\right\|\\
     &\leq \max\left\{\zeta(\delta),1-\frac{1}{1+\zeta(\delta)}\right\}
     =\zeta(\delta).
\end{aligned}
\end{equation*}
\noindent\textbf{Case (i).} $\,\,$ Finally, using the above bound with $\zeta_g(\delta)=0$ (as the quadratic kernel is HRUC with modulus $\zeta_g(\delta)=0$), we obtain $\cR_\varphi(x,y)\leq \zeta_h(\delta)$.
\end{proof}
Combining these closedness propositions, we can easily establish the HRUC regularity of new kernels based on known kernels. 
\begin{example}[Fermi-Dirac entropy]
    Consider the Boltzmann-Shannon entropy
    \begin{equation*} h_{\rm BS}(x) = {\sum}_{i=1}^d\, x_{(i)}\log(x_{(i)})-x_{(i)}. \end{equation*}
    By \cref{table:kernels}, it is $\zeta$-HRUC with $\zeta(\delta) = \exp(\delta)-1$, which will be verified in \Cref{theorem:HURC-List}.  Then for the Fermi-Dirac entropy \cite{bauschke2017descent} defined by:
\begin{equation*} h_{\rm FD}(x):={\sum}_{i=1}^d\, x_{(i)}\log(x_{(i)}) + (1-x_{(i)})\log(1-x_{(i)}), \end{equation*}
we can easily assert that it is also $\zeta$-HRUC with the same $\zeta$ function, which follows from the observation that 
\begin{equation*} h_{\rm FD}(x) = h_{\rm BS}(x) + h_{\rm BS}(-x+\one)+d, \end{equation*} where the constant $d$ can be ignored. By \cref{proposition:Affine}, we assert that $h_{\rm BS}(-x+\one)$ is also $\zeta$-HRUC. Applying \cref{proposition:Affine 1}(ii) then completes the argument.
\end{example}
Next, we present a handy lemma that connects the relative smoothness and the Lipschitz continuity in terms of a local norm and the $\rho_h$-distance in the dual space. 
\begin{lemma}[\textbf{Lipschitz in the distortion geometry}]\label{lem:dual Lipcshitz}
    Let a twice continuously differentiable $f$ be $\sL$-smooth relative to some $\zeta$-HRUC regular kernel $h$. For any covector $z\in\mathrm{Im}(\nabla h)$ and $H_z^*=\nabla^2 h^*(z)$, we have 
    \begin{equation}
        \label{eqn:dual-Lip}
        \| \nabla f(y) - \nabla f(x) \|_{H_z^*} \leq \sL \big(1+\zeta(\delta)\big) \cdot \left\| \nabla h(y) - \nabla h(x) \right\|_{H_z^*} 
    \end{equation}
    as long as the primal vectors $x,y$ satisfy  $\|\nabla h(x)-z\|\leq \delta$ and $\|\nabla h(y)-z\| \leq \delta$.
\end{lemma}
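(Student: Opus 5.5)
The plan is to write the gradient difference as an integral along the dual segment. Put $u:=\nabla h(x)$ and $v:=\nabla h(y)$, and parametrize the dual segment by $w(s):=u+s(v-u)$ for $s\in[0,1]$. Because $h$ is Legendre, $\nabla h^*$ is a bijection from $\idom{h^*}$ onto $\idom{h}$; I also expect $\mathrm{Im}(\nabla h)=\idom{h^*}$ to be convex. This gives the primal curve $\gamma(s):=\nabla h^*(w(s))\in\idom{h}$ with $\gamma(0)=x$ and $\gamma(1)=y$. By the chain rule and the inverse-function identity $\nabla^2h^*(w)=[\nabla^2h(\nabla h^*(w))]^{-1}$, we get $\gamma'(s)=[\nabla^2h(\gamma(s))]^{-1}(v-u)$. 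Therefore
\[
\nabla f(y)-\nabla f(x)=\int_0^1 \nabla^2 f(\gamma(s))\,[\nabla^2 h(\gamma(s))]^{-1}(v-u)\,ds .
\]
Every point of the curve stays close to $z$, since by convexity of the norm $\|\nabla h(\gamma(s))-z\|=\|w(s)-z\|\le (1-s)\|u-z\|+s\|v-z\|\le\delta$.

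Next comes the pointwise operator bound. Write $H:=\nabla^2h(\gamma(s))$, $G:=\nabla^2 f(\gamma(s))$ and $H_z:=\nabla^2h(\nabla h^*(z))=[H_z^*]^{-1}$. I need to bound $\|GH^{-1}\xi\|_{H_z^*}$ by $\sL(1+\zeta(\delta))\|\xi\|_{H_z^*}$. Change variables to $\eta:=H_z^{-1/2}\xi$. The quantity to control becomes the operator norm of $M:=H_z^{-1/2}GH^{-1}H_z^{1/2}$, and this factors as
\[
M=\big(H_z^{-1/2}H^{1/2}\big)\big(H^{-1/2}GH^{-1/2}\big)\big(H^{-1/2}H_z^{1/2}\big).
\]
Relative smoothness gives $-\sL H\preceq G\preceq \sL H$, so the middle factor has norm at most $\sL$. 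The two outer factors are transposes of each other, and each has squared norm $\|H_z^{-1/2}HH_z^{-1/2}\|$. Since $\rho_h(\gamma(s),\nabla h^*(z))\le\delta$, the Hessian comparison \eqref{eqn:HRUC-Hessian-comparison} gives $\|H_z^{-1/2}HH_z^{-1/2}\|\le 1+\zeta(\delta)$. Multiplying the three bounds gives $\|M\|\le\sL(1+\zeta(\delta))$. Integrating over $s$ and applying the triangle inequality under the integral sign then yields \eqref{eqn:dual-Lip}.

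The step I expect to be the main obstacle is justifying that the dual segment stays inside $\mathrm{Im}(\nabla h)$, so that $\gamma$ is well defined and $C^1$. My first approach is the standard Legendre duality from \cite{rockafellar1970convex}: $h^*$ is Legendre, $\nabla h^*$ is the inverse of $\nabla h$, and $\mathrm{Im}(\nabla h)=\idom{h^*}$ is convex. The $C^1$ regularity of $\nabla h^*$ should then follow from the inverse function theorem, because $\nabla^2h\succ0$ on $\idom{h}$. Everything else is linear algebra. The only remaining care is to use the two-sided comparison so that the full factor $1+\zeta(\delta)$ appears, which is exactly the constant claimed.
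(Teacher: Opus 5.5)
Your proof is correct and is essentially the paper's own argument. It uses the same dual-segment curve $\gamma(s)=\nabla h^*\big(\nabla h(x)+s(\nabla h(y)-\nabla h(x))\big)$, the same integral representation of $\nabla f(y)-\nabla f(x)$, and the same three-factor split of $H_z^{-1/2}GH^{-1}H_z^{1/2}$ with the middle factor bounded by $\sL$. Your justification that $\mathrm{Im}(\nabla h)=\idom{h^*}$ is convex, so that $\gamma$ is well defined and $C^1$, is correct; the paper uses this without comment.

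One slip to fix: the outer factors $H_z^{-1/2}H^{1/2}$ and $H^{-1/2}H_z^{1/2}$ are inverses of each other, not transposes. Their squared norms are therefore $\lambda_{\max}$ and $1/\lambda_{\min}$ of $H_z^{-1/2}HH_z^{-1/2}$, respectively, not both $\|H_z^{-1/2}HH_z^{-1/2}\|$. Each is still at most $1+\zeta(\delta)$, but each bound comes from a different side of \eqref{eqn:HRUC-Hessian-comparison}. This is exactly how the paper handles it: it applies HRUC to both orderings of the pair $(\gamma(q),\nabla h^*(z))$ together with \Cref{lem:matrix norm bound}. Your closing remark about needing the two-sided comparison is thus the real reason the constant is right, and the ``transposes'' sentence should be replaced by this.
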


\begin{proof}
    Denote $u =\nabla h(y) - \nabla h(x)$, then we can define the parameterized curve $\gamma:[0,1] \to \Rd$ as $\gamma(q) = \nabla h^*\big(\nabla h(x) + q \cdot u\big)$. Then by Jensen's inequality, we have 
    \begin{equation}\label{eq:distance to curve}
    \left\|\nabla h(\gamma(q)) - z\right\| = \big\|q\big(\nabla h(y) - z\big) + (1-q)\big(\nabla h(x) - z\big) \big\| \leq \delta
    \end{equation}
    for any $q\in[0,1]$. Meanwhile, it follows from the Leibniz integral rule that
    \begin{equation}
        \label{eq:integral expression}
        \nabla f(y) - \nabla f(x) = \!\int_{0}^1 \!\big[\nabla f(\gamma(q)) \big]^\prime   \mathrm{d}q \overset{\text{(i)}}{=} \!\int_0^1 \!\nabla^2 f(\gamma(q))  \big[\nabla^2 h\big(\gamma(q)\big)\big]^{-1} \!  u \; \mathrm{d}q,
    \end{equation}  
    where (i) is due to the chain rule and the fact that $\nabla^2 h^*(\nabla h(w))=[\nabla^2 h(w)]^{-1}$ for any $w\in\idom{h}$. For notational simplicity, let us denote $H^*_z:=\nabla^2 h^*(z)$, $H_{\gamma(q)}:=\nabla^2 h\big(\gamma(q)\big)$ and $F_{\gamma(q)}:=\nabla^2 f\big(\gamma(q)\big)$. 
    Then because $H^*_z$ is symmetric, we have $\|\nabla f(y) - \nabla f(x)\|_{H_z^*} =  \|(H_z^*)^{\frac12}  (\nabla f(y) - \nabla f(x))\|$. Combining \eqref{eq:integral expression}, we have
    \begin{equation}\label{eq:Lipschitz bound 1}
    \begin{aligned}
          \big\|\nabla f(y) \!-\! \nabla f(x)\big\|_{H^*_z} & = \left\|\int_0^1 \!\!(H_z^*)^{\frac12}  H_{\gamma(q)}^{\frac12} \!\cdot H_{\gamma(q)}^{-\frac12} F_{\gamma(q)} \, H_{\gamma(q)}^{-\frac12} \!\cdot H_{\gamma(q)}^{-\frac12}(H_z^*)^{-\frac12}\!\cdot (H_z^*)^{\frac12} u\;\mathrm{d}q \right\|\\
         & \leq \int_0^1 \!\big\| (H_z^*)^{\frac12}  H_{\gamma(q)}^{\frac12}\big\| \cdot \big\| H_{\gamma(q)}^{-\frac12} F_{\gamma(q)} \, H_{\gamma(q)}^{-\frac12}\big\| \cdot \big\|H_{\gamma(q)}^{-\frac12}(H_z^*)^{-\frac12}\big\| \cdot \|u\|_{H_z^*}\; \mathrm{d}q.
    \end{aligned}
      \end{equation}
    Next, we bound each term in \eqref{eq:Lipschitz bound 1}. Because $f$ is $\sL$-smooth relative to $h$, we have 
\[
-\sL I \preceq [\nabla^2 h(w)]^{-\frac12}\, \nabla^2 f(w) \, [\nabla^2 h(w)]^{-\frac12} \preceq \sL I,\quad \forall \, w\in\idom{h}.
\]
Therefore, $\| H_{\gamma(q)}^{-1/2} F_{\gamma(q)} H_{\gamma(q)}^{-1/2} \| \leq \sL$ for all $q \in [0,1]$.  Moreover, by \eqref{eq:distance to curve}, we can apply HRUC regularity and a supporting linear algebra result \Cref{lem:matrix norm bound} to obtain
\[
\big\| (H_z^*)^{1/2} H_{\gamma(q)}^{1/2} \big\| = \big\| [\nabla^2 h(\nabla h^*(z))]^{-1/2} \cdot [\nabla^2 h(\gamma(q))]^{1/2} \big\| \leq \sqrt{1 + \zeta(\delta)} \quad \forall\, q \in [0,1].
\]
Similarly, we can deduce $\| H_{\gamma(q)}^{-1/2} (H_z^*)^{-1/2} \| \leq \sqrt{1 + \zeta(\delta)}$ for all $q \in [0,1]$. Substituting $u = \nabla h(y) - \nabla h(x)$ and the established bounds to \eqref{eq:Lipschitz bound 1} completes the proof. 
\end{proof}

\section{Common kernels are HRUC regular}\label{sec:HRUC of kernels}
In this section, we show the flexibility of HRUC by verifying that representative kernels are HRUC regular.
\begin{theorem}
    \label{theorem:HURC-List} All the kernels listed in \cref{table:kernels} satisfy the HRUC regularity condition, with explicitly computed distortion moduli. 
\end{theorem}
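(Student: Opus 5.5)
The plan is to verify the table row by row, reducing wherever possible to one-dimensional computations via \Cref{proposition:Concat}, \Cref{proposition:Affine} and \Cref{proposition:Affine 1}.

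\textbf{Coordinate-separable kernels.} Consider Boltzmann--Shannon, Burg, exponential, Tsallis, harmonic-sum and Hellinger. By \Cref{proposition:Concat} it suffices to treat a scalar kernel $\phi$ with $\phi''>0$. For such $\phi$, $\cR_\phi(x,y)=|\phi''(x)/\phi''(y)-1|$. The key device is to reparametrize by the dual variable $s=\phi'(x)$ and set $\psi(s):=\log\phi''((\phi')^{-1}(s))$. Then
\[
\frac{d\psi}{ds}=\frac{\phi'''(x)}{\phi''(x)^2},
\]
so if $\sup_x|\phi'''(x)|/\phi''(x)^2\le K$ we get $\phi''(x)/\phi''(y)\le e^{K|s_x-s_y|}$ and hence $\zeta(\delta)=e^{K\delta}-1$. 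The constants come out as follows.
\begin{itemize}
\item Boltzmann--Shannon: $\phi''=1/x$, $\phi'''=-1/x^2$, so $K=1$.
\item Hellinger: $\phi''=(1-x^2)^{-3/2}$, and a direct computation gives $|\phi'''|/\phi''^2=3|x|(1-x^2)^{1/2}\le 3/2$.
\item Regularized kernels $\frac{\mu}{2}x^2+g(x)$: here $\phi''=\mu+g''$, and I bound $|g'''|/(\mu+g'')^2$ by maximizing over $x$, using AM--GM in the form $(\mu+g'')^2\ge 4\mu g''$ or a weighted variant. For Burg, $g''=x^{-2}$ and $|g'''|=2x^{-3}$, and the maximum gives $1/\sqrt{\mu}$ up to a constant. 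For the exponential kernel, $e^x/(\mu+e^x)^2\le 1/(4\mu)$. For the harmonic sum and Tsallis the same one-variable maximization yields the exponents in the table.
\end{itemize}
For the Tsallis entropy on $\Delta_d$ I would treat the kernel on $\R_+^d$ (its Legendre extension), since the row is stated coordinatewise.

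\textbf{Non-separable kernels.} The same idea is applied along a path in the dual. Given $x,y$ with $\rho_h(x,y)\le\delta$, consider $\gamma(q)=\nabla h^*(\nabla h(x)+q u)$ with $u=\nabla h(y)-\nabla h(x)$. Let $H(q)=\nabla^2h(\gamma(q))$. Then
\[
\frac{d}{dq}H=D^3h(\gamma)[H^{-1}u],
\]
so it suffices to prove a bound $\|H^{-1/2}(D^3h[H^{-1}u])H^{-1/2}\|\le K\|u\|$. Given this, Grönwall applied to $\log$ of the extreme eigenvalues of $H(0)^{-1/2}H(q)H(0)^{-1/2}$ yields $\zeta(\delta)=e^{K\delta}-1$.
\begin{itemize}
\item Self-concordant $h$: $|D^3h[v]|\preceq 2M\|v\|_H H$. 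With $v=H^{-1}u$ we have $\|v\|_H=\|u\|_{H^{-1}}\le\|u\|/\sqrt{\mu}$, giving $K=2M/\sqrt\mu$.
\item $\mu$-strongly convex with $\rho$-Lipschitz Hessian: here the direct estimate
\[
\|\nabla^2h(y)^{-1/2}(\nabla^2h(x)-\nabla^2h(y))\nabla^2h(y)^{-1/2}\|\le \frac{\rho\|x-y\|}{\mu}\le\frac{\rho\,\rho_h(x,y)}{\mu^2}
\]
suffices, since $\|x-y\|\le\rho_h(x,y)/\mu$.
\item Norm exponential: $\nabla^2h=e^{\|x\|^2/2}(I+xx^\top)$. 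I would compute $D^3h$ explicitly and bound the ratio by $2$.
\end{itemize}

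\textbf{Power kernel.} This is the main obstacle, because the modulus is not of exponential form. There $\nabla^2 h=(\mu+\|x\|^r)I+r\|x\|^{r-2}xx^\top$, whose eigenvalues lie in $[\mu+\|x\|^r,\,\mu+(r+1)\|x\|^r]$, and $\nabla h(x)=(\mu+\|x\|^r)x$. My first attempt is a direct comparison split into two regimes.
\begin{itemize}
\item Small norms, $\|x\|^r\lesssim\mu$: here $\|x-y\|\le\delta/\mu$, and Lipschitz-type bounds on $\|x\|^r$ produce the $\delta^r/\mu^{1+r}$ and $\delta/\mu^{1+1/r}$ terms.
\item Large norms: the relative change of $\|x\|$ is controlled by $\delta/(\|x\|^{r+1})$.
\end{itemize}
The constant $\max\{10,r^22^{r+1}\}$ suggests crude bounds of the form $|a^r-b^r|\le r2^{r}\max(a,b)^{r-1}|a-b|$ together with the separate case $r<1$, which uses Hölder continuity $|a^r-b^r|\le|a-b|^r$. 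I expect the bookkeeping to reach exactly the stated constants to be delicate, and I would accept any constant of the same form if needed.
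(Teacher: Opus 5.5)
Your separable-kernel argument is the paper's sufficient-condition proposition specialized to the footnoted choice $\phi_i=h_i'$, so $H=1$ and $G=\sup|\phi'''|/(\phi'')^2$. It is correct and gives constants no worse than the table's. For example, you get $3\sqrt3/(8\sqrt\mu)$ for Burg and $1/(4\mu)$ for the exponential kernel, where the paper uses $\phi_i(t)=1/t$ and $\phi_i(t)=e^t$ and obtains $1/\sqrt\mu$ and $1/\mu$. For Tsallis and the harmonic sum you still have to check the full maximized constant, not just the power of $\mu$; it does fall below the tabulated one.

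For self-concordant kernels your route differs from the paper's, and both work. The paper differentiates $\tfrac12\log(u^\top\nabla^2h(x(s))u)$ along the \emph{primal} segment $x(s)=y+s(x-y)$. There the local-norm speed is not directly controlled and must be bounded via $\int_0^1\|x-y\|^2_{\nabla^2h(x(s))}ds=\langle x-y,\nabla h(x)-\nabla h(y)\rangle\le\delta^2/\mu$ plus Jensen. On your dual segment the speed is $\|u\|_{H^{-1}}\le\|u\|/\sqrt\mu$ immediately. Both routes give $\exp(2M\delta/\sqrt\mu)-1$.

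For the norm-exponential kernel, the quickest way to your ``ratio $\le2$'' is the paper's. It uses $\nabla^2h=h(I+xx^\top)\succeq I$ and $|\nabla^3h(x)[u,u,u]|\le 2h(\langle x,u\rangle^2+\|u\|^2)^{3/2}\le2\|u\|^3_{\nabla^2h(x)}$ (since $h\ge1$). Then your self-concordant bullet applies with $M=\mu=1$.

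The genuine gap is the power-kernel row. What you give is a heuristic, and you say it may not reach the tabulated modulus, so the theorem is not established for this kernel. Two concrete problems:
\begin{itemize}
\item Your sketch tracks only the change of $\|x\|$, through bounds on $|a^r-b^r|$. But $\nabla^2h(x)=(\mu+\|x\|^r)I+r\|x\|^r\hat x\hat x^\top$ with $\hat x=x/\|x\|$ also depends on the direction, which is discontinuous at the origin. So for $r<1$ the scalar H\"older bound does not control the rank-one term.
\item Splitting by the size of $\|x\|$ leaves open the mixed configurations, with one point near $0$ and the other far away. These must be covered, because HRUC is required for every $\delta\ge0$.
\end{itemize}
Your self-concordance machinery does not rescue this row. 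For $r>1$ it yields only $\exp(r(r+3)\delta/\mu^{1+1/r})-1$, which exceeds the polynomial table entry for large $\delta$. For $r\le1$ the kernel is not $\mathcal{C}^3$ at the origin.

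The paper needs no split in $\|x\|$. From $\nabla^2h(y)\succeq(\mu+\|y\|^r)I$ it gets $\cR_h(x,y)\le\|\nabla^2h(x)-\nabla^2h(y)\|/(\mu+\|y\|^r)$. It writes the difference as $\int_0^1\nabla^3h(x(s))[x-y]\,ds$, using absolute continuity to handle the origin when $r<1$. It then applies $\|\nabla^3h(x)[u]\|\le(3r+|r(r-2)|)\|x\|^{r-1}\|u\|$, which accounts for the direction change automatically. The case split is on $r$:
\begin{itemize}
\item For $r>1$: use $\|x-y\|\le\delta/\mu$, $\|x(s)\|\le\|y\|+\delta/\mu$, and $(a+b)^{r-1}\le2^{r-1}(a^{r-1}+b^{r-1})$. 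Young's inequality $t+\mu t^{1-r}\ge\mu^{1/r}$ bounds $\|y\|^{r-1}/(\mu+\|y\|^r)$ by $\mu^{-1/r}$ uniformly in $\|y\|$. This gives $\frac{2^{r+1}r^2}{\mu}(\delta\mu^{-1/r}+\delta^r\mu^{-r})$.
\item For $0<r\le1$: with $v=(x-y)/\|x-y\|$ one has $\|x(s)\|\ge|\langle y,v\rangle+s\|x-y\||$. Integrability of $|t|^{r-1}$ then gives $\int_0^1\|x(s)\|^{r-1}ds\le\frac2r\|x-y\|^{r-1}$, hence $\cR_h\le10\|x-y\|^r/\mu\le10\delta^r/\mu^{1+r}$.
\end{itemize}
Together these give exactly the tabulated modulus $\max\{10,r^2\,2^{r+1}\}\,\mu^{-1}(\delta\mu^{-1/r}+\delta^r\mu^{-r})$.
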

\begin{proof}[Proof sketch]
In \cref{subsec:separable kernel}, we propose sufficient conditions to establish the HRUC regularity of separable kernels, which cover standard instances such as Boltzmann-Shannon, Burg's, and Tsallis entropies, as well as Hellinger functions. Then, in \cref{subsec:Self-concordant}, we demonstrate that self-concordant functions are also HRUC regular. The self-concordant functions class is very broad, see, e.g., \cite[Chapter 5.1]{Nesterov2018}. Finally, in \cref{subsec:power kernel}, we derive sharper HRUC moduli for functions with Lipschitz continuous Hessians and power kernels through direct analysis.
\end{proof}

\subsection{Separable kernels}\label{subsec:separable kernel}
A kernel $h:\dom{h}\to\R$ is called \emph{separable} if it admits the decomposition $h(x)=\sum_{i=1}^d h_i(x_{(i)})$, where $x_{(i)}$ is the $i$-th coordinate of $x\in\Rd$ and $h_i:\dom{h_i}\to\R$. In \Cref{lem:verify kernels}, we establish sufficient conditions for such functions to be HRUC regular. Notably, all separable kernels listed in \cref{table:kernels} satisfy these criteria. Therefore, establishing HRUC regularity and deriving the modulus $\zeta(\delta)$ for a specific separable kernel reduces to a direct verification of the conditions in \Cref{lem:verify kernels}. The proof of \Cref{lem:verify kernels} is in \cref{appendix:separable kernel}. 
\begin{proposition}[\textbf{Sufficient conditions for HRUC regularity}]\label{lem:verify kernels}
    Let $h:\dom{h}\to\R$ be a separable kernel and let $\sG, \sH > 0$ be constants. For each coordinate $i\in[d]$, assume there exists a bijection\footnote{A typical choice is $\phi_i:=h_i^\prime$.} $\phi_i:\idom{h_i}\to\mathrm{Im}(\phi_i)$ satisfying:
       \begin{itemize}
        \item the condition $\rho_h(x,y)\leq \delta$ implies $|\phi_i(x_{(i)})-\phi_i(y_{(i)})|\leq \sH\cdot  \delta$; 
        \item the mapping $g_i(s):=\log\big(h_i^{\prime\prime}(\phi_i^{-1}(s))\big)$ is $\sG$-Lipschitz continuous on $\mathrm{Im}(\phi_i)$.
    \end{itemize} 
    Then, $h$ is HRUC-regular with the modulus $\zeta(\delta):=\exp(\sG\sH  \delta)-1$. 
\end{proposition}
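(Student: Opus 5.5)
For a separable kernel the Hessian is diagonal, $\nabla^2 h(x)=\mathrm{Diag}\big(h_1''(x_{(1)}),\ldots,h_d''(x_{(d)})\big)$, with positive entries on $\idom{h}$. The plan is to reduce the matrix quantity $\cR_h(x,y)$ to a maximum over coordinates of scalar ratios, and then bound each ratio by passing to the logarithmic scale via $g_i$.

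\textbf{Step 1 (diagonal reduction).} Since $[\nabla^2h(y)]^{-1/2}\nabla^2h(x)[\nabla^2h(y)]^{-1/2}$ is diagonal with entries $h_i''(x_{(i)})/h_i''(y_{(i)})$, we get
\[
\cR_h(x,y)=\max_{i\in[d]}\left|\frac{h_i''(x_{(i)})}{h_i''(y_{(i)})}-1\right|.
\]
So it suffices to bound every such ratio by $\exp(\sG\sH\delta)-1$ in absolute value.

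\textbf{Step 2 (log-Lipschitz transfer).} Fix $i$ and set $s=\phi_i(x_{(i)})$ and $t=\phi_i(y_{(i)})$, both in $\mathrm{Im}(\phi_i)$. Because $\phi_i$ is a bijection, $x_{(i)}=\phi_i^{-1}(s)$, which gives
\[
\log\frac{h_i''(x_{(i)})}{h_i''(y_{(i)})}=g_i(s)-g_i(t).
\]
The Lipschitz property of $g_i$ gives $|g_i(s)-g_i(t)|\le \sG|s-t|$. The first hypothesis, applied under $\rho_h(x,y)\le\delta$, gives $|s-t|\le \sH\delta$. Hence the ratio $r_i:=h_i''(x_{(i)})/h_i''(y_{(i)})$ satisfies $e^{-\sG\sH\delta}\le r_i\le e^{\sG\sH\delta}$.

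\textbf{Step 3 (from log bound to $\zeta$).} We have $r_i-1\le e^{\sG\sH\delta}-1$. In the other direction, $1-r_i\le 1-e^{-\sG\sH\delta}\le e^{\sG\sH\delta}-1$, using $1-e^{-a}\le e^a-1$ for $a\ge0$. Thus $|r_i-1|\le \exp(\sG\sH\delta)-1=\zeta(\delta)$ for every $i$, and Step 1 finishes the proof.

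It remains to check that $\zeta$ is admissible in the sense of \Cref{def:kernel cond}: it is continuous, non-decreasing, and satisfies $\zeta(0)=0$. One subtlety is that the Lipschitz bound on $g_i$ is applied to two points of $\mathrm{Im}(\phi_i)$ without needing that image to be an interval, since Lipschitz continuity is stated on that set. There is no real obstacle here; the only delicate point is the asymmetric lower deviation in Step 3, which the elementary inequality above handles.
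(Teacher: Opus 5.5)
Your proposal is correct and follows the paper's proof essentially step by step. Like the paper, you reduce $\cR_h(x,y)$ to the coordinatewise maximum of $\bigl|h_i''(x_{(i)})/h_i''(y_{(i)})-1\bigr|$ and bound each ratio through $g_i\circ\phi_i$ using the Lipschitz and dual-proximity hypotheses. Your Step 3 simply spells out the elementary inequality $|r-1|\le \exp(|\log r|)-1$, which the paper applies in a single line.
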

Below, we verify the HRUC regularity of the separable kernels in \cref{table:kernels} via \Cref{lem:verify kernels}.
\smallskip

\noindent $\bullet$ {(Boltzmann-Shannon entropy)} 
Direct computation leads to $h_i^\prime(t)=\log(t)$ and $h_i^{\prime\prime}(t)=\frac{1}{t}$ for each $i\in[d]$.  Define mappings: \[\phi_i(t):=h_i^\prime(t)=\log(t),\;t>0 \quad \text{and} \quad g_i(s):=-s,\; s\in\R. \]
It is easy to verify that $|\phi_i(x_i)-\phi_i(y_i)|\leq\delta$ provided that $\rho_h(x,y)\leq \delta$ and $|g_i^\prime(s)| \leq 1$. Applying \Cref{lem:verify kernels}, we show that HRUC regularity holds with $\zeta(\delta) = \exp(\delta) -1$.\par\smallskip

\noindent $\bullet$ {(Regularized Burg's entropy)} For each $i\in[d]$, direct computation shows  $h_i^\prime(t)=\mu t-1/t$ and $h_i^{\prime\prime}(t)=\mu + 1/t^2$. Define mappings:
\[
\phi_i(t):=1/t,\; t>0 \quad \text{and} \quad g_i(s):=\log(\mu+s^2),\;s>0.
\]
Then, $|g_i^\prime(s)|=|\frac{2s}{\mu+s^2}| \leq 1/\sqrt{\mu}$ and $\rho_h(x,y)\leq \delta$ infers $|\phi_i(x_i)-\phi_i(y_i)|\leq\delta$. By \Cref{lem:verify kernels}, HRUC regularity holds with $\zeta(\delta)=\exp(\delta/\sqrt{\mu})-1$.\par\smallskip

\noindent $\bullet$ {(Regularized Tsallis entropy)} For each $i\in[d]$, we have $h_i^\prime(t)=\mu t- \frac{q}{1-q}\cdot t^{q-1}$ and $h_i^{\prime\prime}(t)=\mu + q \cdot  t^{q-2}$, $q\in(0,1)$ and $t>0$. Introduce mappings
\[
\phi_i(t):=t^{q-1},\;t>0 \quad \text{and} \quad g_i(s):=\log\big(\mu+q\cdot s^{\frac{q-2}{q-1}}\big),\;s>0.
\]
Note that $\rho_h(x,y)\leq \delta$ implies $|\phi_i(x_i)-\phi_i(y_i)| \leq \frac{\delta(1-q)}{q}$ and 
\[g^\prime(s)=\frac{q(2-q)}{1-q}\cdot \frac{s^{\frac{1}{1-q}}}{\mu + q\cdot s^{\frac{2-q}{1-q}}} \leq \frac{2}{2-2q}\cdot \frac{1}{q^{\frac{1}{2-q}}\mu^{\frac{1-q}{2-q}}} \qquad \forall s>0.\] 
By \Cref{lem:verify kernels}, we conclude that HRUC holds with $\zeta(\delta)= \exp\big(q^{\frac{q-3}{2-q}}\mu^{\frac{q-1}{2-q}}\cdot \delta \big)-1$.\par\smallskip
 
\noindent $\bullet$ {(Exponential kernel)} 
Defining $\phi_i(t):=\exp(t)$ and  $g_i(s):=\log(\mu+s)$, $s>0$ and applying \Cref{lem:verify kernels}
, we yield the HRUC function $\zeta(\delta):=\exp(\delta/\mu)-1$.\par\smallskip

\noindent $\bullet$ {(Generalized harmonic sum)} 
Since the proof is almost identical to that of \emph{regularized Tsallis entropy}, we thus omit the proof and conclude that HRUC holds with $\zeta(\delta)=\exp\big(\big[(p+1)^p/(p\mu^{p+1})\big]^{\frac{1}{p+2}}\delta\big)-1$.\par\smallskip

\noindent $\bullet$ {(Hellinger function)} For each $i\in[d]$, we have $h_i^\prime(t)=\frac{t}{\sqrt{1-t^2}}$ and $h_i^{\prime\prime}(t)=(1-t^2)^{-3/2}$, $t\in(-1,1)$. Set
\[
\phi_i(t):=\frac{t}{\sqrt{1-t^2}},\;t\in(-1,1)\quad \text{and} \quad g_i(s):=\frac32\log(1+s^2),\; s\in\R.
\]
Obviously, $\rho_h(x,y)\leq \delta$ implies $|\phi_i(x_i)-\phi_i(y_i)|\leq \delta$. 
Direct calculation gives 
$g^\prime(s)=\frac{3s}{1+s^2}$. Thus, ${\sup}_{s\in\mathbb{R}}\;|g'(s)|=\frac32$.
Applying \Cref{lem:verify kernels}, the HRUC regularity with $\zeta(\delta)=\exp(3\delta/2)-1$.

\subsection{\texorpdfstring{$M$-self-concordant and $\mu$-strongly convex function}
{M-self-concordant and mu-strongly convex function}}\label{subsec:Self-concordant}
Self-concordant functions encompass a broad spectrum of kernels, including strongly convex functions with Lipschitz continuous Hessians,  norm exponential and power kernels. 
The definition of self-concordant function \cite[Definition 5.1.1]{Nesterov2018} is given below.
\begin{definition}[\textbf{Self-concordant function}]
Let $f:\dom f\subseteq\Rd \to\R$ be a closed convex function 
and $f\in \cC^3(\dom f)$. We say that $f$ is \emph{$M$-self-concordant} if 
\[
\big|\nabla^3 f(x)[u,u,u]\big|
\;\leq\;
2M\|u\|^3_{\nabla^2 f(x)},\quad \forall\,x\in\dom f,\; \forall\, u\in\R^{d}.
\]
\end{definition}
Next, we show that self-concordant, strongly convex  functions are HRUC regular. Such a result serves as a useful analytical tool that facilitates the verification and derivation of the modulus $\zeta(\delta)$ for functions in this class.

\begin{proposition}[\textbf{HRUC regularity of self-concordant functions}]\label{prop:HRUC self-concordant}
    Let $M,\mu>0$ be given. Then, $M$-self-concordant and $\mu$-strongly convex functions are HRUC regular with modulus $\zeta(\delta)=\exp(\frac{2M}{\sqrt{\mu}}\delta)-1$.
\end{proposition}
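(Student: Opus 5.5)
Fix $x,y\in\idom{h}$ with $\rho_h(x,y)\le\delta$. The plan is to connect $x$ and $y$ by the dual segment, as in \Cref{lem:dual Lipcshitz}, and to control the Hessian along it with a Gr\"onwall-type argument driven by self-concordance. Put $u:=\nabla h(y)-\nabla h(x)$, $\gamma(q):=\nabla h^*(\nabla h(x)+qu)$ for $q\in[0,1]$, and $H(q):=\nabla^2h(\gamma(q))$. By the chain rule, as in \eqref{eq:integral expression}, $\gamma'(q)=H(q)^{-1}u$.

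Fix a direction $v\neq0$ and set $\psi(q):=\log\big(v^\top H(q)v\big)$. Then $\psi'(q)=\nabla^3h(\gamma(q))[\gamma'(q),v,v]/\|v\|^2_{H(q)}$. Self-concordance gives the standard trilinear bound (e.g.\ \cite[Lemma 5.1.2]{Nesterov2018}, obtained by polarization of the cubic form):
\[
\big|\nabla^3h(w)[a,v,v]\big|\le 2M\|a\|_{\nabla^2h(w)}\|v\|^2_{\nabla^2h(w)}.
\]
Applying it with $a=\gamma'(q)$ yields $|\psi'(q)|\le 2M\|\gamma'(q)\|_{H(q)}=2M\|u\|_{H(q)^{-1}}$. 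By $\mu$-strong convexity, $H(q)\succeq\mu I$, so $\|u\|_{H(q)^{-1}}\le\|u\|/\sqrt{\mu}\le\delta/\sqrt{\mu}$. Integrating over $[0,1]$ gives
\[
\left|\log\frac{v^\top\nabla^2h(y)v}{v^\top\nabla^2h(x)v}\right|\le \frac{2M\delta}{\sqrt{\mu}} \quad\text{for every } v\neq0.
\]
This is the two-sided Löwner comparison $e^{-c}\nabla^2h(x)\preceq\nabla^2h(y)\preceq e^{c}\nabla^2h(x)$ with $c=2M\delta/\sqrt{\mu}$.

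It remains to translate this into \eqref{eqn:HRUC}. By the Rayleigh characterization of $\cR_h$ stated after \Cref{def:kernel cond}, $\cR_h(x,y)=\max_v\big|\tfrac{v^\top\nabla^2h(x)v}{v^\top\nabla^2h(y)v}-1\big|$. The ratio lies in $[e^{-c},e^{c}]$, so $\cR_h(x,y)\le\max\{e^c-1,1-e^{-c}\}=e^c-1$. The function $\zeta(\delta)=\exp(2M\delta/\sqrt{\mu})-1$ is continuous, nondecreasing, and satisfies $\zeta(0)=0$, so it is an admissible distortion modulus.

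The main obstacle is the trilinear inequality. The definition only bounds the diagonal cubic form $\nabla^3h[u,u,u]$, while we need the mixed term $\nabla^3h[a,v,v]$. I would cite Nesterov's lemma, which gives this bound with the same constant $2M$. Otherwise I would prove it by noting that $t\mapsto\nabla^3h[a,\cdot,\cdot]$ is a symmetric bilinear form and applying the known polarization result for symmetric trilinear forms dominated on the diagonal by a norm cubed. A secondary check is that the whole segment $\gamma([0,1])$ stays in $\idom{h}$. This holds because $\mathrm{Im}(\nabla h)$ is convex for Legendre $h$ (it equals $\idom{h^*}$), so $\gamma$ is well defined and $C^1$. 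Note that here the dual segment is used directly; no ball around an auxiliary covector $z$ is needed, unlike in \Cref{lem:dual Lipcshitz}.
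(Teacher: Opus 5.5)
Your proof is correct, but it takes a different route from the paper's: you integrate along a different path. The paper uses the primal segment $x(s)=y+s(x-y)$ and applies the same trilinear bound from Nesterov's Lemma 5.1.2, which gives $\big|\log\frac{w^\top\nabla^2h(x)w}{w^\top\nabla^2h(y)w}\big|\le 2M\int_0^1\|x-y\|_{\nabla^2h(x(s))}\,\mathrm{d}s$. There the velocity $x-y$ is constant, so its local norm is not controlled pointwise by $\delta$, and the paper needs an extra chain of estimates:
\begin{itemize}
\item the identity $\int_0^1\|x-y\|^2_{\nabla^2h(x(s))}\,\mathrm{d}s=\langle x-y,\nabla h(x)-\nabla h(y)\rangle\le\delta\|x-y\|$;
\item the strong-monotonicity bound $\|x-y\|\le\delta/\mu$;
\item Jensen's inequality, to pass from the integrated square to the integral of the norm.
\end{itemize}
You instead integrate along the dual segment, the same curve used in \Cref{lem:dual Lipcshitz}. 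There the velocity $\gamma'(q)=H(q)^{-1}u$ has local norm exactly $\|u\|_{H(q)^{-1}}$, and $H(q)\succeq\mu I$ bounds this pointwise by $\delta/\sqrt{\mu}$. This is shorter, and it shows directly that the modulus is governed by the dual distance $\rho_h$ measured in the inverse-Hessian metric.

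The price of your route is that you need $\mathrm{Im}(\nabla h)$ to be convex and $\nabla h^*$ to be $C^1$. You correctly obtain both from the Legendre property via $\mathrm{Im}(\nabla h)=\idom{h^*}$ and the inverse function theorem. They are also automatic when $\dom{h}=\Rd$ and $h$ is strongly convex, which is the case listed in the paper's table. The paper's primal segment needs only convexity of $\dom{h}$. Both routes yield the same constant $2M/\sqrt{\mu}$ and the same final step from the two-sided Rayleigh-quotient bound to $\cR_h(x,y)\le e^{c}-1$.
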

The proof is provided in \cref{appendix:HRUC self-concordant}. Below, we verify that the following kernel classes are self-concordant and hence HRUC regular.
\smallskip

\noindent $\bullet$ {(Norm exponential kernel)}
Consider the kernel $h(x):=\exp(\|x\|^2/2)$, $x\in\Rd$.
Direct calculation yields  $\nabla^2h(x)=h(x)(I+xx^\top)\succeq I$ and
\[
\big|\nabla^3h(x)[u,u,u]\big|
\leq2h(x)\big(\langle x,u\rangle^2+\|u\|^2\big)^{3/2}
\leq2\|u\|_{\nabla^2h(x)}^3\qquad \forall\,u\in\Rd.
\]
Thus, $h$ is $1$-self-concordant and $1$-strongly convex. Invoking \Cref{prop:HRUC self-concordant}, we show that such a kernel is HRUC regular with modulus $\zeta(\delta)=\exp(2\delta)-1$.\par\smallskip

\noindent $\bullet$ {(General $\mu$-strongly convex and $\rho$-Lipschitz Hessian functions)}
Observe that any $\cC^3$ kernel in this class is $M$-self-concordant with $M=\frac{\rho}{2\mu^{3/2}}$.
By \Cref{prop:HRUC self-concordant}, $M$-self-concordant and $\mu$-strongly convex functions are HRUC regular with $\zeta(\delta) = \exp(\rho\delta/{\mu^2})-1$.\par\smallskip

\noindent $\bullet$ {(Power kernels)}
When $r>1$, the kernel $h(x):=\mu\|x\|^2/2+\|x\|^{r+2}/(r+2)$ is $M_r$-self-concordant and $\mu$-strongly convex with $M_r=r(r+3)/(2\mu^{1/2+1/r})$. By \Cref{prop:HRUC self-concordant}, this kernel is HRUC regular with $\zeta(\delta)=\exp\big(r(r+3)\delta/\mu^{1+1/r}\big)-1$.\par\smallskip

\subsection{Sharper HRUC Moduli}\label{subsec:power kernel}
\Cref{prop:HRUC self-concordant} establishes HRUC regularity for all three classes in a simple and unified way. However, this verification imposes stronger regularity conditions and generally does not yield \emph{sharp} moduli. For functions with Lipschitz continuous Hessians, it requires $\cC^3$ smoothness, whereas the direct analysis only requires $\cC^2$ smoothness. For power kernels, it applies only when $r>1$, whereas the direct analysis applies to every $r>0$. In both cases, the self-concordance argument yields exponential moduli $\zeta(\delta)$ that are looser than the direct bounds. This subsection derives sharper moduli under weaker conditions.
\smallskip

\noindent $\bullet$ {(General $\mu$-strongly convex and $\rho$-Lipschitz Hessian functions)} Strong convexity and the $\rho$-Lipschitz continuity of the Hessian imply
\[
\cR_h(x,y)
\leq \frac{1}{\mu}\big\|\nabla^2h(x)-\nabla^2h(y)\big\|
\leq \frac{\rho}{\mu}\|x-y\|
\leq \frac{\rho}{\mu^2}\delta:=\zeta(\delta).
\]
\noindent $\bullet$ {(Power kernels)} In this subsection, we analyze the power kernels
\[h(x)=\frac{\mu}{2}\|x\|^2 + \frac{1}{r+2}\|x\|^{r+2} \quad \text{with $r>0$.}\] We provide the detailed analysis in \cref{appendix:power kernel}, which yields
\[
\hspace{3cm}\cR_h(x,y) \leq \zeta(\delta):=\frac{\max\{10, r^2 2^{r+1}\}}{\mu}\Big(\frac{\delta}{\mu^{1/r}}+\frac{\delta^r}{\mu^r}\Big). \hspace{3cm}
\]

\section{Algorithm and convergence analysis under HRUC}
Having finished the introduction of the new regularity condition, in this section, we illustrate how to design and analyze a distributed optimization algorithm under the HRUC regularity. 
\subsection{Algorithmic design}
\begin{assumption}
    \label[assumption]{assumption:HRUC}
    The kernel $h$ is HRUC-regular  with respect to the function $\zeta$. 
\end{assumption}
For a more compact presentation, we introduce the following notation:
\begin{equation}
	\bx:=\begin{bmatrix}
	    x_1^\top\\[1mm] x_2^\top\\[1mm] \vdots \\[1mm] x_m^\top
	\end{bmatrix}, \qquad
    \bz:=\begin{bmatrix}
	    z_1^\top\\[1mm] z_2^\top\\[1mm] \vdots \\[1mm] z_m^\top
	\end{bmatrix}, \qquad
    \nabla \bh(\bx):=\begin{bmatrix}
	    \nabla h(x_1)^\top\\[1mm] \nabla h(x_2)^\top\\[1mm] \vdots \\[1mm] \nabla h(x_m)^\top
	\end{bmatrix}
	\qquad \text{and} \qquad 
	\nabla \bff(\bx):=\begin{bmatrix}
	\nabla f_1(x_1)^\top\\[1mm]  \nabla f_2(x_2)^\top\\[1mm] \vdots\\[1mm]  \nabla f_m(x_m)^\top
	\end{bmatrix}.
\end{equation}
Denote the dual variable on each agent $i$ and their averaged iterates as 
\[
z_i := \nabla h(x_i)\;\; \forall \,i\in[m]\quad \text{and} \quad \bar z:=\frac{1}{m}{\sum}_{i=1}^m z_i.
\]
In particular, we introduce a clipping operator that operates on a vector $v\in\Rd$:
\begin{equation}\label{eq:clipping operator}
   \clip(v):=\begin{cases}
       \eta v & \; \text{if $\eta\|v\|\leq \delta $},\\[2mm]
        \frac{\delta v}{\|v\|}&\;\text{otherwise}.
   \end{cases} \quad \text{or equivalently } \quad \clip(v):= v \cdot \min\left\{\eta\,,\,\frac{\delta}{\|v\|}\right\}.
\end{equation}
If the input of $\clip$ is a matrix $\bv=[v_1^\top,v_2^\top,\cdots,v_m^\top]^\top \in\R^{m\times d}$, then we default it to the row-wise clipping, that is, $\clip(\bv)^\top = \big[\clip(v_1), \clip(v_2),\cdots, \clip(v_m)\big]$. 
Given these notations, we provide a realization of the dual mixing scheme \eqref{eq:dual-mixing} as \Cref{alg:DMGT}. 

\begin{algorithm}[H]
   \caption{Dual Mixing Gradient Tracking ({\sf DMGT})}
   \label{alg:DMGT}
\begin{algorithmic}
   \State {\bfseries Initialize:} $x_1^0=\cdots=x_m^0$, set $\bz^0=\nabla \bh(\bx^0)$, $\by^0=\nabla \bff(\bx^0)$, kernel $h$, symmetric doubly stochastic mixing matrix $W$, step size $\eta>0$, and clipping threshold $\delta>0$.
   \Repeat\vspace{-4mm}
   \State\begin{minipage}{.95\linewidth}
   \begin{align}
     \label{eq:update z}  \bz^{t+1} &= W \big(\bz^t - \clip(\by^t)\big)\\[1mm]
     \label{eq:update x}\bx^{t+1}&=\nabla \bh^*(\bz^{t+1})\\[1mm]
     \label{eq:update y}  \by^{t+1} &= W \by^t + \nabla \bff(\bx^{t+1}) - \nabla \bff(\bx^{t})
   \end{align}
   \end{minipage}
   \vspace{1mm}
   \Until{meets user specified termination condition.}
\end{algorithmic}
\end{algorithm}
As this algorithm adopts a gradient tracking schedule for the update direction, we call it Dual Mixing Gradient Tracking ({\sf DMGT}) algorithm. In each iteration $t$ of {\sf DMGT}, the agent $i$ maintains and communicates the local variable in its dual form $z_i^t$. Let $\mathcal{N}_i$ be the set of neighboring agents of $i$. Then it performs a clipped gradient step and mixes the neighboring dual local variables  $z_{i}^{t+1} = \sum_{j\in\mathcal{N}_i}W_{(i,j)}(z_{j}^t-\clip(y_j^t))$, obtains the primal local variables by mapping the dual variable  back to primal space $x_{i}^{t+1} = \nabla h^*(z_i^{t+1})$, and it tracks the global gradient information in the style of \cite{Qu2018Harnessing} through $y_{i}^{t+1} = \sum_{j\in\mathcal{N}_i} W_{(i,j)}y_{j}^t + \nabla f_i(x_i^{t+1})-\nabla f_i(x_i^{t})$. 

Compared to the conceptual dual mixing scheme \eqref{eq:dual-mixing} in the introduction, a minimal modification in \eqref{eq:update z} is the adoption of a clipping operator $\clip(\cdot)$. This step is incorporated to secure a relatively smooth change of the kernel Hessians by invoking HRUC regularity. In detail, denote $\bs^t:=\clip(\by^t)$ with  rows $s_i^t:= \clip(y^t_i)$, then $\|s_i^t\|\leq \delta$, equation \eqref{eq:update z} gives $\bz^{t+1} = W (\bz^t - \bs^t)$, and moreover,
\begin{equation}\label{eq:update bar z}
\bar z^{t+1} = \bar z^t - \bar s^t
\quad \text{where} \quad \bar s^t:=\frac1m{\sum}_{i=1}^m \,s_i^t\quad \text{and} \quad \bar z^t:=\frac1m{\sum}_{i=1}^m \,z_i^t.    
\end{equation}

\paragraph{Removing the bi-convexity requirement via dual mixing}
The dual mixing update \eqref{eq:update z} is linear in the dual space. Together with the column stochasticity of $W$, this yields the recursion \eqref{eq:update bar z} for averaged iterates. By contrast, the primal mixing update \eqref{eq:primal-mixing} first forms the weighted average $\tilde x_i^t:=\sum_{j=1}^mW_{(i,j)}x_j^t$ in the primal space and then maps it to the dual space through the \emph{nonlinear} mirror map $\nabla h$. Averaging this update over the agents yields
\[
\bar z^{t+1}
=\frac1m{\sum}_{i=1}^m\nabla h\Big({\sum}_{j=1}^mW_{(i,j)}x_j^t\Big)
-\frac{\eta_t}{m}{\sum}_{i=1}^m y_i^t,
\]
whose nonlinear first term generally cannot be reduced to $\bar z^t=\frac1m\sum_j\nabla h(x_j^t)$ because $\nabla h$ does not commute with averaging. Thus, rather than working with a closed recursion for averaged iterates such as \eqref{eq:update bar z}, existing primal mixing analyses \cite{li2016distributed,yuan2018optimal,fang2024gossip} control the effect of mixing through the aggregate Bregman divergence. Specifically, for any $u\in\dom{h}$, they use
\[
{\sum}_{i=1}^m\cD(u,\tilde x_i^t)
\overset{\text{(i)}}{\leq} {\sum}_{i=1}^m{\sum}_{j=1}^mW_{(i,j)}\cD(u,x_j^t)
= {\sum}_{j=1}^m\cD(u,x_j^t).
\]
This bound ensures that primal mixing does not increase the aggregate Bregman divergence. However, Jensen's inequality (i) requires convexity of $v\mapsto\cD(u,v)$, namely, \ref{C2}. Dual mixing avoids this Jensen step because it works directly with the closed averaged recursion \eqref{eq:update bar z}.
 
\paragraph{Removing the global Lipschitz smoothness via clipping and HRUC}
The condition \ref{C1} imposes a uniform upper bound on $\nabla^2h$. By contrast, clipping \eqref{eq:clipping operator} keeps the iterates within controlled dual-space neighborhoods, where HRUC provides relative Hessian control. Thus, the analysis only requires relative control along the iterates, rather than a global upper bound on $\nabla^2h$. The following lemma quantifies these clipping-induced bounds.

\begin{lemma}
\label{lem:iterates bounds}
Suppose \Cref{as:matrix} holds, and let $\{\bx^t\}$, $\{\by^t\}$, $\{\bz^t\}$ be generated by \Cref{alg:DMGT}. Then, it holds for all $t\in\N$ and $i\in[m]$ that
 \[
    \|\bar z^t - \bar z^{t+1}\| \leq \delta,\quad \|\bz^{t} - \one (\bar z^{t})^\top\|_F \leq \frac{\sqrt{3m}}{1-\rho}\cdot \delta, \quad \|\bz^t - \bz^{t+1}\|_F\leq \frac{\sqrt{20m}}{1-\rho}\cdot \delta. 
\]
\end{lemma}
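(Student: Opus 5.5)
The plan is to exploit only two facts: the clipping bound $\|s_i^t\|\leq\delta$ for every $i$ and $t$, which holds directly by the definition \eqref{eq:clipping operator}, and the consensus contraction of \Cref{lem:contraction} applied with $H=H^+=I$ (so $\alpha=0$). The first claim is immediate. By \eqref{eq:update bar z}, $\bar z^t-\bar z^{t+1}=\bar s^t=\frac1m\sum_i s_i^t$, and the triangle inequality gives $\|\bar s^t\|\leq\delta$.

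For the consensus error, write $E_t:=\|\bz^t-\one(\bar z^t)^\top\|_F^2=\sum_i\|z_i^t-\bar z^t\|^2$. The update \eqref{eq:update z} reads $\bz^{t+1}=W\bz^t+\bu^t$ with $\bu^t:=-W\bs^t$. Invoking \Cref{lem:contraction} with $H=H^+=I$ yields
\[
E_{t+1}\leq \rho E_t+\frac{3}{1-\rho}\|W\bs^t\|_F^2\leq \rho E_t+\frac{3m\delta^2}{1-\rho},
\]
where $\|W\bs^t\|_F\leq\|W\|\,\|\bs^t\|_F\leq\sqrt m\,\delta$ because $W$ is doubly stochastic and symmetric, so $\|W\|=1$. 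The initialization $x_1^0=\cdots=x_m^0$ gives $E_0=0$. Unrolling the recursion gives $E_t\leq\frac{3m\delta^2}{(1-\rho)^2}$, which is exactly the second claim. One point needs care: the constant in \Cref{lem:contraction} must be used as stated. I should also double-check that the $\bu$ there may be any matrix, in particular the $W$-multiplied one. Alternatively, I could apply the lemma with $\bv=\bz^t-\bs^t$ and $\bu=0$, which gives $E_{t+1}\leq\rho\sum_i\|z_i^t-s_i^t-\overline{(z-s)}\|^2$. The first route matches the constant $3$, so I will use it.

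For the third claim, decompose
\[
\bz^t-\bz^{t+1}=(\bz^t-\one(\bar z^t)^\top)-(\bz^{t+1}-\one(\bar z^{t+1})^\top)+\one(\bar z^t-\bar z^{t+1})^\top .
\]
The triangle inequality then gives
\[
\|\bz^t-\bz^{t+1}\|_F\leq \frac{2\sqrt{3m}}{1-\rho}\delta+\sqrt m\,\delta\leq\frac{(2\sqrt3+1)\sqrt m}{1-\rho}\delta .
\]
Since $2\sqrt3+1\approx 4.46\leq\sqrt{20}\approx4.47$, this gives the stated bound. If a tighter route is needed, I would instead write $\bz^t-\bz^{t+1}=(I-W)(\bz^t-\one(\bar z^t)^\top)+W\bs^t$, use $\|I-W\|\leq 2$, and apply $(a+b)^2\leq 2a^2+2b^2$ type bounds.

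The only delicate step is the constant bookkeeping in the third bound. The numbers $2\sqrt3+1$ and $\sqrt{20}$ are nearly equal, so the crude triangle inequality barely suffices. I will verify the inequality $(2\sqrt3+1)^2=13+4\sqrt3\leq 20$ explicitly, which holds since $4\sqrt3<7$.
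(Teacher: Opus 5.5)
Your proof is correct. The paper omits its own proof and only cites \cite[Lemma 5.4]{hong2022divergence}, so the comparison below is with the argument its constants point to.

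For the first two bounds you give exactly that argument. You use $\|s_i^t\|\leq\delta$ from the clipping rule, then \Cref{lem:contraction} with $H=H^+=I$ (so $\alpha=0$) and $\bu=-W\bs^t$. The lemma does allow an arbitrary $\bu$, and the paper makes exactly this choice in the proof of \Cref{lem:consensus errors}. You finish with $\|W\bs^t\|_F\leq\sqrt m\,\delta$, $E_0=0$, and unrolling; the constant $3$ confirms this is the intended route.

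For the third bound your path is slightly different. You pass through the averages and reuse the first two claims, which gives the constant $2\sqrt3+1<\sqrt{20}$. The constant $\sqrt{20}$ in the statement is instead what one gets from writing $\bz^t-\bz^{t+1}=(I-W)(\bz^t-\one(\bar z^t)^\top)+W\bs^t$, using $\|I-W\|\leq 2$, and applying Young's inequality with $\varepsilon=1/3$. This yields $\frac{16}{3}\cdot\frac{3m\delta^2}{(1-\rho)^2}+4m\delta^2\leq\frac{20m\delta^2}{(1-\rho)^2}$, the same computation the paper carries out in local norms in the proof of \Cref{lem:consensus errors}.

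Your route is a little more economical and gives a marginally sharper constant. The paper's route has the advantage of being the Euclidean special case of an estimate it needs later anyway.
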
 
The proof of \Cref{lem:iterates bounds} resembles that of \cite[Lemma 5.4]{hong2022divergence}, and is omitted. The bounds in \Cref{lem:iterates bounds} keep consecutive local iterates close in the dual space, ensuring that the HRUC bound \eqref{eqn:HRUC} and the dual-space Lipschitz estimate \eqref{eqn:dual-Lip} apply along the iterates, with constants explicitly determined by $\zeta$. The clipping operator returns the ordinary step $\clip(y_i^t)=\eta y_i^t$ whenever $\eta\|y_i^t\|\leq\delta$. Later, \Cref{prop:finite many clip} shows that clipping occurs only finitely many times.  
 
\subsection{Lyapunov analysis and convergence rate}
Having clarified the algorithmic design, we illustrate how the analysis of standard distributed algorithms extends seamlessly to the mirror setting through the lens of HRUC regularity. Following the discussion of \eqref{eq:dual-mixing} in the introduction, we define the ``average'' primal variable by $\bar x^t:=\nabla h^*(\bar z^t)$. For brevity, we introduce a quantity
\begin{equation}
\label{defn:lambda}
\lambda:=1+\zeta\Big(\tfrac{\sqrt{20m}}{1-\rho}\cdot \delta\Big),
\end{equation}
which will be used throughout the latter discussion. The convergence analysis proceeds as follows. First, \Cref{lem:first descent,lem:consensus errors} establish an objective-descent estimate and a contraction recursion for consensus errors, respectively. Combining these estimates, \Cref{prop:descent} yields a Lyapunov descent inequality, from which \Cref{thm:complexity} derives the $\cO(1/T)$ rate for the optimality measure.  

\begin{lemma}
\label{lem:first descent}
Given \Crefrange{assump:Legrendre}{as:matrix} and \Cref{assumption:HRUC}, for any iteration  $t\in\N$ of \Cref{alg:DMGT}, there exists $\theta_t\in[0,1]$ such that 
for all $\alpha_1,\alpha_2>0$, the following holds: 
    \begin{equation*} \begin{aligned}
    f(\bar x^{t+1})  - f(\bar x^t)\leq & - \sL \cdot \cD(\bar x^t,\bar x^{t+1}) -\Big[\frac{1}{\eta}-\frac{2\sL+\alpha_1+\alpha_2}{2}\Big]\cdot \frac{1}{m}{\sum}_{i=1}^m\|s^t_i\|_{H_{\theta_t}^*}^2 \\& + \frac{1}{2m \alpha_1} {\sum}_{i=1}^m\|y_i^t - \bar y^t\|^2_{H_{\theta_t}^*} + \frac{\sL^2\lambda^2}{2m\alpha_2} {\sum}_{i=1}^m \|\bar z^t - z_i^t\|_{H_{\theta_t}^*}^2,
    \end{aligned} \end{equation*}
    where $H_{\theta_t}^*:=\nabla^2 h^*(\bar{z}_{\theta_t})\succ0$, with $\bar{z}_{\theta_t}=(1-\theta_t) \bar z^t + \theta_t \,\bar z^{t+1}$.
\end{lemma}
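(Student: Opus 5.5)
The plan is to run the usual one-step descent argument for the dual average $\bar z^t$, and to use a mean-value argument so that a single metric $H^*_{\theta_t}$ appears throughout. Set $\Delta:=\bar x^{t+1}-\bar x^t$.

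\textbf{Step 1 (descent lemma plus three-point identity).} Apply the extended descent lemma \eqref{eq:extended descent} at $y=\bar x^t$:
\[
f(\bar x^{t+1})\le f(\bar x^t)+\langle\nabla f(\bar x^t),\Delta\rangle+\sL\,\cD(\bar x^{t+1},\bar x^t).
\]
Next use the symmetrized Bregman identity
\[
\cD(\bar x^{t+1},\bar x^t)+\cD(\bar x^t,\bar x^{t+1})=\langle \bar z^{t+1}-\bar z^t,\Delta\rangle=-\langle\bar s^t,\Delta\rangle,
\]
where the last equality is \eqref{eq:update bar z}. Substituting gives
\[
f(\bar x^{t+1})-f(\bar x^t)\le -\sL\,\cD(\bar x^t,\bar x^{t+1})+\langle \nabla f(\bar x^t)-\sL\bar s^t,\Delta\rangle.
\]
This already produces the leading term $-\sL\cD(\bar x^t,\bar x^{t+1})$.

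\textbf{Step 2 (one metric via mean value).} Write $\Delta=\nabla h^*(\bar z^{t+1})-\nabla h^*(\bar z^t)=-\int_0^1 \nabla^2h^*(\bar z_\theta)\,\bar s^t\,\mathrm d\theta$. The integrand $\theta\mapsto\langle\nabla f(\bar x^t)-\sL\bar s^t,\nabla^2 h^*(\bar z_\theta)\bar s^t\rangle$ is scalar and continuous, so the mean value theorem for integrals gives some $\theta_t\in[0,1]$ with
\[
\langle \nabla f(\bar x^t)-\sL\bar s^t,\Delta\rangle=-\langle\nabla f(\bar x^t),\bar s^t\rangle_{H^*_{\theta_t}}+\sL\|\bar s^t\|^2_{H^*_{\theta_t}}.
\]
By Jensen's inequality, $\|\bar s^t\|^2_{H^*_{\theta_t}}\le\frac1m\sum_i\|s_i^t\|^2_{H^*_{\theta_t}}$. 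This $\theta_t$ depends only on the iterates, not on $\alpha_1,\alpha_2$, as the statement requires.

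\textbf{Step 3 (split the gradient).} Gradient tracking with $W$ doubly stochastic and the initialization preserves $\bar y^t=\frac1m\sum_i\nabla f_i(x_i^t)$. Put $e^t:=\frac1m\sum_i(\nabla f_i(\bar x^t)-\nabla f_i(x_i^t))$, so that $\nabla f(\bar x^t)=\bar y^t+e^t$. Expanding $\bar s^t$ over agents,
\[
-\langle \nabla f(\bar x^t),\bar s^t\rangle_{H^*}=-\frac1m\sum_i\langle y_i^t,s_i^t\rangle_{H^*}+\frac1m\sum_i\langle y_i^t-\bar y^t,s_i^t\rangle_{H^*}-\langle e^t,\bar s^t\rangle_{H^*}.
\]
We treat the three terms as follows.
\begin{itemize}
\item By \eqref{eq:clipping operator}, $s_i^t=c_iy_i^t$ with $c_i\in(0,\eta]$ (if $y_i^t=0$ then $s_i^t=0$). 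Hence $\langle y_i^t,s_i^t\rangle_{H^*}=c_i^{-1}\|s_i^t\|^2_{H^*}\ge\eta^{-1}\|s_i^t\|^2_{H^*}$, which gives the $-1/\eta$ term.
\item Young's inequality with parameter $\alpha_1$ on the second term yields $\frac{1}{2m\alpha_1}\sum_i\|y_i^t-\bar y^t\|^2_{H^*}+\frac{\alpha_1}{2m}\sum_i\|s_i^t\|^2_{H^*}$.
\item Young's inequality with parameter $\alpha_2$ on the third term yields $\frac{1}{2\alpha_2}\|e^t\|^2_{H^*}+\frac{\alpha_2}{2m}\sum_i\|s_i^t\|^2_{H^*}$.
\end{itemize}
Collecting the $\|s_i^t\|^2$ coefficients $\sL+\frac{\alpha_1+\alpha_2}{2}$ against $-\frac1\eta$ gives exactly the bracket in the statement.

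\textbf{Step 4 (bounding $e^t$; the main obstacle).} Apply Jensen to get $\|e^t\|^2_{H^*}\le\frac1m\sum_i\|\nabla f_i(\bar x^t)-\nabla f_i(x_i^t)\|^2_{H^*}$. Then invoke \Cref{lem:dual Lipcshitz} for each $f_i$ with center $z=\bar z_{\theta_t}$ and radius $\delta'=\frac{\sqrt{20m}}{1-\rho}\delta$. This needs both $\|\bar z^t-\bar z_{\theta_t}\|\le\delta'$ and $\|z_i^t-\bar z_{\theta_t}\|\le\delta'$. By \Cref{lem:iterates bounds},
\[
\|\bar z^t-\bar z_{\theta_t}\|\le\|\bar z^{t+1}-\bar z^t\|\le\delta,
\qquad
\|z_i^t-\bar z_{\theta_t}\|\le\|z_i^t-\bar z^t\|+\delta\le\Big(\tfrac{\sqrt{3m}}{1-\rho}+1\Big)\delta\le\delta'.
\]
\Cref{lem:dual Lipcshitz} then yields $\|\nabla f_i(\bar x^t)-\nabla f_i(x_i^t)\|_{H^*}\le\sL\lambda\|\bar z^t-z_i^t\|_{H^*}$ with $\lambda$ as in \eqref{defn:lambda}. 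This produces the final term $\frac{\sL^2\lambda^2}{2m\alpha_2}\sum_i\|\bar z^t-z_i^t\|^2_{H^*}$. This radius verification is the step I expect to need the most care: everything must be measured in the one metric $H^*_{\theta_t}$, and that is possible only because all relevant points lie within $\delta'$ of $\bar z_{\theta_t}$.
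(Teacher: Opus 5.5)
Your proposal is correct and follows essentially the same route as the paper. Both arguments use the descent lemma plus the (symmetrized) three-point identity, a mean-value choice of $\theta_t$ that fixes the single metric $H^*_{\theta_t}$, the same three-term split handled by the clipping bound and two Young inequalities, and \Cref{lem:dual Lipcshitz} centered at $\bar z_{\theta_t}$ with radius $\frac{\sqrt{20m}}{1-\rho}\delta$. The only differences are cosmetic: you use the integral form of the mean value theorem, and you apply Jensen before Young on the $e^t$ term where the paper applies Young per agent.
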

This lemma captures the iterative descent (with controllable errors) of {\sf DMGT} using the local norms induced by the kernel's (conjugation's) Hessian, which satisfies the relationship that $\nabla^2 h^*(z) = [\nabla^2 h(x)]^{-1}$ for all $z=\nabla h(x)$. The proof of this lemma is relegated to \cref{proof of lem:first descent}. In the next lemma, we capture the iterative descent and how the consensus error evolves for {\sf DMGT}.  
\begin{lemma} 
\label{lem:consensus errors}
Given \Crefrange{assump:Legrendre}{as:matrix}, \Cref{assumption:HRUC}, and define the consensus potential  
\begin{equation}
    \label{def:E_t}
    \cE_t:=\frac1m{\sum}_{i=1}^m \Big(\|y_i^t - \bar y^t\|^2_{H_{\theta_t}^*} + \xi \cdot \|\bar z^{t} - z_i^{t}\|_{H_{\theta_{t}}^*}^2\Big),
\end{equation}  
with the constant $\xi = \frac{32\sL^2\lambda^2}{(1-\rho)^2}$. Then, the following bound is valid for all $t\in\N:$
\begin{equation}
    \label{eq:consensus E} 
    \cE_{t+1} \leq \frac{1+\rho}{2}\cdot \cE_t + \frac{108\sL^2\lambda^2}{(1-\rho)^{3}}\cdot \frac1m{\sum}_{i=1}^m\|s^t_i\|_{H_{\theta_t}^*}^2,
\end{equation} 
as long as we select the clipping radius $\delta$ such that $\zeta(2\delta)\leq \frac{1-\rho}{2}\in(0,1).$
\end{lemma}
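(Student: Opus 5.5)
We will derive separate contraction recursions for the two consensus components of $\cE_t$ using \Cref{lem:contraction}, each time with $H = H_{\theta_t}^*$ and $H^+ = H_{\theta_{t+1}}^*$. Then we combine them with the weight $\xi$.

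\textbf{Step 1: the metric-change condition.} We first verify $\|[H^*_{\theta_{t+1}}]^{1/2}[H^*_{\theta_t}]^{-1/2}\|^2\le 1+\alpha$ with $\alpha\le\frac{1-\rho}{2}$. Since $H^*_\theta=[\nabla^2 h(\nabla h^*(\bar z_\theta))]^{-1}$, this reduces to a HRUC comparison \eqref{eqn:HRUC-Hessian-comparison} between the primal points $\nabla h^*(\bar z_{\theta_t})$ and $\nabla h^*(\bar z_{\theta_{t+1}})$. Both $\bar z_{\theta_t}$ and $\bar z_{\theta_{t+1}}$ lie on the dual segment from $\bar z^t$ to $\bar z^{t+2}$. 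By \Cref{lem:iterates bounds}, $\|\bar z^{t+1}-\bar z^t\|\le\delta$ for every $t$, so their dual distance is at most $2\delta$. Hence $\rho_h\le 2\delta$, the constant is $1+\zeta(2\delta)\le 1+\frac{1-\rho}{2}$, and this explains the hypothesis on $\delta$. The identity $\|[H^+]^{1/2}H^{-1/2}\|^2=\|H^{-1/2}H^+H^{-1/2}\|$ converts the Löwner bound into the required norm bound.

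\textbf{Step 2: the $z$-consensus.} From \eqref{eq:update z} we have $\bz^{t+1}=W\bz^t+\bu$ with $\bu=-W\bs^t$. We apply \Cref{lem:contraction} and bound $\sum_i\|u_i\|_{H}^2\le\sum_i\|s_i^t\|_H^2$; this uses that $W$ is doubly stochastic and Jensen/convexity of the squared norm row-wise, or more simply $\|W\|\le1$ acting on the columns of $H^{1/2}$-weighted rows. This gives
\[
\textstyle\frac1m\sum_i\|z_i^{t+1}-\bar z^{t+1}\|^2_{H^*_{\theta_{t+1}}}\le \rho\,\frac1m\sum_i\|z_i^t-\bar z^t\|^2_{H^*_{\theta_t}}+\frac{3}{1-\rho}\frac1m\sum_i\|s_i^t\|^2_{H^*_{\theta_t}}.
\]

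\textbf{Step 3: the $y$-consensus.} From \eqref{eq:update y}, $\bu=\nabla\bff(\bx^{t+1})-\nabla\bff(\bx^t)$, and \Cref{lem:contraction} reduces the work to bounding $\sum_i\|\nabla f_i(x_i^{t+1})-\nabla f_i(x_i^t)\|^2_{H^*_{\theta_t}}$. This is the main obstacle. We invoke \Cref{lem:dual Lipcshitz} with reference covector $z=\bar z_{\theta_t}$. We need $\|z_i^{t+1}-\bar z_{\theta_t}\|$ and $\|z_i^t-\bar z_{\theta_t}\|$ to be at most the radius $\frac{\sqrt{20m}}{1-\rho}\delta$ defining $\lambda$. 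This follows from \Cref{lem:iterates bounds}: the consensus bound $\frac{\sqrt{3m}}{1-\rho}\delta$ plus $\|\bar z^{t+1}-\bar z^t\|\le\delta$ fits within $\frac{\sqrt{20m}}{1-\rho}\delta$. The Lipschitz factor is then $\sL\lambda$, giving the bound $\sL^2\lambda^2\|z_i^{t+1}-z_i^t\|^2_{H^*_{\theta_t}}$. Next we write $z_i^{t+1}-z_i^t=(z_i^{t+1}-\bar z^{t+1})-(z_i^t-\bar z^t)-\bar s^t$. Expanding with $(a+b+c)^2\le3(\cdot)$, and using the Step 2 bound for the $t+1$ consensus term (modulo the metric change, a factor $\le2$), yields
\[
\textstyle\frac1m\sum_i\|u_i\|^2_{H^*_{\theta_t}}\le c_1\sL^2\lambda^2\Big(\frac1m\sum_i\|z_i^t-\bar z^t\|^2_{H^*_{\theta_t}}+\frac{1}{1-\rho}\frac1m\sum_i\|s_i^t\|^2_{H^*_{\theta_t}}\Big),
\]
with a modest absolute constant $c_1$. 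Here $\|\bar s^t\|^2\le\frac1m\sum\|s_i^t\|^2$ by convexity.

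\textbf{Step 4: combining with $\xi$.} Adding $\xi$ times the Step 2 inequality to the Step 3 inequality gives the following coefficients. The $y$-part has coefficient $\rho\le\frac{1+\rho}{2}$. The $z$-part has coefficient $\xi\rho+\frac{3c_1\sL^2\lambda^2}{1-\rho}$, which is at most $\xi\frac{1+\rho}{2}$ exactly when $\xi\frac{1-\rho}{2}\ge\frac{3c_1\sL^2\lambda^2}{1-\rho}$; the choice $\xi=\frac{32\sL^2\lambda^2}{(1-\rho)^2}$ is designed for this. The $s$-coefficient collects $\frac{3\xi}{1-\rho}+\frac{3c_1\sL^2\lambda^2}{(1-\rho)^2}\lesssim\frac{\sL^2\lambda^2}{(1-\rho)^3}$, which we would track to get the constant $108$. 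The only delicate bookkeeping is keeping every norm in the $H^*_{\theta_t}$ metric on the right-hand side, which Steps 1–3 arrange.
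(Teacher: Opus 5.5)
Your architecture matches the paper's: the $1+\zeta(2\delta)$ metric-change check, two applications of \Cref{lem:contraction} with $H=H^*_{\theta_t}$ and $H^+=H^*_{\theta_{t+1}}$, and \Cref{lem:dual Lipcshitz} centered at $\bar z_{\theta_t}$ with radius $\frac{\sqrt{20m}}{1-\rho}\delta$. (A small correction in Step 1: $\bar z_{\theta_t}$ and $\bar z_{\theta_{t+1}}$ lie on the broken line $\bar z^t\to\bar z^{t+1}\to\bar z^{t+2}$, not on a single segment. The $2\delta$ bound still follows from the triangle inequality through $\bar z^{t+1}$.)

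The gap is in Step 3's bound on $\sum_i\|z_i^{t+1}-z_i^t\|^2_{H^*_{\theta_t}}$. It is too lossy to close with the constants fixed in the statement, and Step 4 has no slack. Write that bound as $c_Z\sum_i\|z_i^t-\bar z^t\|^2_{H^*_{\theta_t}}+c_S\sum_i\|s_i^t\|^2_{H^*_{\theta_t}}$. After combining, the $z$-coefficient is $\xi\rho+\frac{3c_Z\sL^2\lambda^2}{1-\rho}$. With $\xi=\frac{32\sL^2\lambda^2}{(1-\rho)^2}$, this is at most $\xi\frac{1+\rho}{2}$ if and only if $c_Z\le\frac{16}{3}$. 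Your route uses the split $(a+b+c)^2\le 3(a^2+b^2+c^2)$, bounds the $(t+1)$ consensus term via Step 2, and converts it back to $H^*_{\theta_t}$. This gives $c_Z=3+3\rho\,(1+\zeta(2\delta))\ge 3+3\rho$, which exceeds $\frac{16}{3}$ as soon as $\rho>\frac79$.

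Routing the $(t+1)$ term through Step 2 also imports its factor $\frac{3}{1-\rho}$. So $c_S\ge\frac{9}{1-\rho}+3$ rather than an absolute constant, and the total $s$-coefficient is at least $\frac{\sL^2\lambda^2}{(1-\rho)^3}\big(96+27(1-\rho)+9(1-\rho)^2\big)$. This exceeds $\frac{108\sL^2\lambda^2}{(1-\rho)^3}$ for every $\rho\le\frac12$. Hence your claims that $\xi$ ``is designed for this'' and that tracking yields $108$ are false for your decomposition. What you would actually prove is a variant with a larger $\xi$ and a larger $s$-constant, which in turn changes the step-size threshold in \Cref{prop:descent}.

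The missing step is to bound the increment directly at time $t$ in the $H^*_{\theta_t}$ metric, without passing through the consensus error at $t+1$. Since $(W-I)\one=0$,
\[
\bz^{t+1}-\bz^t=(W-I)\big(\bz^t-\one(\bar z^t)^\top\big)-W\bs^t .
\]
Young's inequality with $\varepsilon=\frac13$, together with $\|W-I\|\le 2$ and $\|W\|\le 1$ (Frobenius norms weighted row-wise by $[H^*_{\theta_t}]^{1/2}$), gives $c_Z=\frac{16}{3}$ and $c_S=4$. No metric change and no $\frac{1}{1-\rho}$ factor are needed. Then the three coefficients close as stated:
the $z$-coefficient is exactly $\xi\rho+\frac{16\sL^2\lambda^2}{1-\rho}=\xi\frac{1+\rho}{2}$;
the $y$-coefficient is $\rho\le\frac{1+\rho}{2}$;
the $s$-coefficient is $\frac{96\sL^2\lambda^2}{(1-\rho)^3}+\frac{12\sL^2\lambda^2}{1-\rho}\le\frac{108\sL^2\lambda^2}{(1-\rho)^3}$.
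This is precisely the paper's argument.
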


A key difference to standard analysis is the adoption of the local norms $\|\cdot\|_{H_{\theta_t}^*}$, which are introduced to handle the non-homogeneous geometry induced by the  nonlinear mirror map $\nabla h$. By the clipping bound (\Cref{lem:iterates bounds}), we know $\|\bar{z}_{\theta_{t+1}}-\bar{z}_{\theta_t}\|\leq 2\delta$, which, together with the $\zeta$-HRUC regularity of the kernel $h$ (\Cref{assumption:HRUC}), gives 
\begin{equation}
    \label{eqn:H-rel-diff}
    \Big\|[H_{\theta_{t+1}}^*]^{\frac{1}{2}}[H_{\theta_t}^*]^{-\frac{1}{2}}\Big\|^2
    =\Big\|[H_{\theta_t}^*]^{-\frac{1}{2}}H_{\theta_{t+1}}^*[H_{\theta_t}^*]^{-\frac{1}{2}}\Big\|
    \leq 1+\zeta(2\delta),
\end{equation}
It provides a smooth transition between the consecutive local norms: 
$$\frac{1}{1+\zeta(2\delta)}\leq \frac{\|\cdot\|_{H_{\theta_{t+1}}^*}}{\|\cdot\|_{H_{\theta_t}^*}\,\,\,\,\,} \leq    1+\zeta(2\delta),$$ which is sufficient for a contraction when  $\delta$ is small enough such that $\zeta(2\delta)\leq \frac{1-\rho}{2}\in(0,1)$. For succinctness, we move the rest of proof to 
  \cref{proof of lem:consensus errors}.

In view of \Cref{lem:first descent,lem:consensus errors}, we introduce the following potential sequence 
$$\cM_{t}:=f(\bar x^{t}) + \frac{1}{8\sL}\cdot \cE_t.$$
The next lemma establishes the monotonicity of this sequence.  

\begin{lemma}\label{prop:descent}
Given the setting of \Cref{lem:consensus errors},  and select $0<\eta \leq \frac{(1-\rho)^{3}}{25\sL\lambda^2}$, then    
\begin{equation}\label{eq:prop:descent}
\cM_{t+1}\leq \cM_t - \frac{1}{12\eta}\cdot \frac{1}{m}{\sum}_{i=1}^m\|s^t_i\|_{H_{\theta_t}^*}^2 - \sL  \cD(\bar x^t,\bar x^{t+1}) -\frac{(1-\rho)}{32 \sL}\cdot \cE_t  \qquad \forall\;t\in\N.
\end{equation}
\end{lemma}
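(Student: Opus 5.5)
This is a direct Lyapunov combination of \Cref{lem:first descent} and \Cref{lem:consensus errors}. I will add $\frac{1}{8\sL}$ times \eqref{eq:consensus E} to the descent inequality of \Cref{lem:first descent}. Writing $S_t:=\frac1m\sum_i\|s_i^t\|^2_{H^*_{\theta_t}}$, this gives
\begin{equation*}
\begin{aligned}
\cM_{t+1}-\cM_t \leq{}& -\sL\cD(\bar x^t,\bar x^{t+1}) -\Big[\frac1\eta-\frac{2\sL+\alpha_1+\alpha_2}{2}-\frac{108\sL\lambda^2}{8(1-\rho)^3}\Big]S_t\\
&+\frac{1}{2\alpha_1}\cdot\frac1m{\sum}_i\|y_i^t-\bar y^t\|^2_{H^*_{\theta_t}} +\frac{\sL^2\lambda^2}{2\alpha_2}\cdot\frac1m{\sum}_i\|\bar z^t-z_i^t\|^2_{H^*_{\theta_t}} -\frac{1-\rho}{16\sL}\cE_t ,
\end{aligned}
\end{equation*}
where the last term uses $\frac{1}{8\sL}\big(\frac{1+\rho}{2}-1\big)\cE_t=-\frac{1-\rho}{16\sL}\cE_t$.

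The key step is choosing $\alpha_1,\alpha_2$ so that the two positive consensus terms absorb at most half of $\frac{1-\rho}{16\sL}\cE_t$, leaving $-\frac{1-\rho}{32\sL}\cE_t$. I will take $\alpha_1=\frac{32\sL}{1-\rho}$. Then $\frac{1}{2\alpha_1}=\frac{1-\rho}{64\sL}$, which equals $\frac{1-\rho}{32\sL}\cdot\frac12$ times the $y$-part of $\cE_t$. For the $z$-part, recall $\xi=\frac{32\sL^2\lambda^2}{(1-\rho)^2}$, so I need $\frac{\sL^2\lambda^2}{2\alpha_2}\leq\frac{1-\rho}{32\sL}\cdot\xi\cdot\frac12=\frac{\sL\lambda^2}{2(1-\rho)}$. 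This holds with $\alpha_2=\frac{\sL}{1-\rho}$, and in fact with equality. The two consensus terms therefore together contribute at most $\frac{1-\rho}{32\sL}\cE_t$ (only half of it on the $y$-part), which gives the claimed $-\frac{1-\rho}{32\sL}\cE_t$.

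It remains to check that the bracket is at least $\frac{1}{12\eta}$. With the choices above, the bracket's subtracted constants total
\[
\sL+\frac{16\sL}{1-\rho}+\frac{\sL}{2(1-\rho)}+\frac{27\sL\lambda^2}{2(1-\rho)^3}\leq\frac{(1+16+\tfrac12+13.5)\sL\lambda^2}{(1-\rho)^3}=\frac{31\sL\lambda^2}{(1-\rho)^3},
\]
using $\lambda\ge1$ and $1-\rho\le1$. The step-size condition $\eta\le\frac{(1-\rho)^3}{25\sL\lambda^2}$ gives $\frac{\sL\lambda^2}{(1-\rho)^3}\leq\frac{1}{25\eta}$, so the subtracted constants are at most $\frac{31}{25\eta}$. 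That exceeds $\frac1\eta$, so this crude bound fails.

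The main obstacle is therefore this constant bookkeeping, and I will tighten it. The loss comes from $\alpha_1$. Instead I will take $\alpha_1$ as large as the $y$-budget permits: the $y$-term needs only $\frac{1}{2\alpha_1}\le\frac{1-\rho}{32\sL}$, i.e.\ $\alpha_1=\frac{16\sL}{1-\rho}$ suffices, which contributes $\frac{8\sL}{1-\rho}$. The total then becomes $(1+8+\tfrac12+13.5)\frac{\sL\lambda^2}{(1-\rho)^3}=\frac{23\sL\lambda^2}{(1-\rho)^3}\le\frac{23}{25\eta}$. This still leaves only $\frac{2}{25\eta}<\frac{1}{12\eta}$.

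So I will have to use the slack differently. One option is to bound $\sL\le\frac{(1-\rho)^3}{25\eta\lambda^2}$ more carefully, keeping the factors of $1-\rho$ for the lower-order terms. Another is to split the $-\frac{1-\rho}{16\sL}\cE_t$ budget unevenly between the $y$- and $z$-parts. I expect that adjusting $\alpha_1,\alpha_2$ within the available budget, together with the exact $(1-\rho)$ powers, closes the gap to $\frac{1}{12\eta}$; if it does not, I would recheck the constant $108$ in \Cref{lem:consensus errors} against its proof.
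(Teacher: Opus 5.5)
Your framework matches the paper's: add $\frac{1}{8\sL}$ times \eqref{eq:consensus E} to \Cref{lem:first descent}, then choose $\alpha_1,\alpha_2$ so that the positive consensus terms are absorbed by $\frac{1-\rho}{32\sL}\cE_t$. As written, however, the proposal does not prove the lemma. You end with a bracket constant of $23$, which leaves only $\frac{2}{25\eta}<\frac{1}{12\eta}$, and the remedies you list would not close this gap.

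At $\rho=0$, $\lambda=1$, $\eta=\frac{1}{25\sL}$ (all admissible), every estimate $1\le\frac{1}{1-\rho}\le\frac{1}{(1-\rho)^3}$ and $\lambda\ge1$ holds with equality. Your choices then give an $S_t$-coefficient of exactly $2\sL<\frac{25}{12}\sL=\frac{1}{12\eta}$, so keeping exact powers of $1-\rho$ gains nothing. Nor is there a shared budget to split unevenly. $\cE_t$ is a sum of two separate nonnegative parts, and each can absorb the full coefficient $\frac{1-\rho}{32\sL}$ on its own. You already did this for the $y$-part with $\alpha_1=\frac{16\sL}{1-\rho}$.

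The real problem is $\alpha_2$, and it comes from an algebra slip. With $\alpha_2=\frac{\sL}{1-\rho}$ you get $\frac{\sL^2\lambda^2}{2\alpha_2}=\frac{(1-\rho)\sL\lambda^2}{2}$, not $\frac{\sL\lambda^2}{2(1-\rho)}$. So the claimed equality is false, and you are using only a $\frac{(1-\rho)^2}{2}$ fraction of the available $z$-budget.

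Because $\xi=\frac{32\sL^2\lambda^2}{(1-\rho)^2}$, the $z$-budget is $\frac{1-\rho}{32\sL}\,\xi=\frac{\sL\lambda^2}{1-\rho}$. This allows $\alpha_2$ as small as $\frac{(1-\rho)\sL}{2}$, which is the paper's choice. Then:
\begin{itemize}
\item $\frac{\alpha_2}{2}=\frac{(1-\rho)\sL}{4}\le\frac{\sL}{4}$, so the bracket constants total $1+8+\frac14+13.5=22.75$.
\item $\frac{1}{\eta}-\frac{22.75}{25\eta}=\frac{0.09}{\eta}\ge\frac{1}{12\eta}$.
\item The two consensus terms contribute exactly $\frac{1-\rho}{32\sL}\cE_t$. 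Combined with $-\frac{1-\rho}{16\sL}\cE_t$, this leaves the claimed $-\frac{1-\rho}{32\sL}\cE_t$.
\end{itemize}
There is no need to recheck the constant $108$.
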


The proof is in \cref{appendix:proof of prop:descent}.
As a direct consequence of \eqref{eq:prop:descent}, the following bound holds:
\begin{equation*} \begin{aligned}
    \frac{1}{T}\sum_{t=0}^{T-1} \bigg[\frac{1}{12\eta}\cdot \frac{1}{m}{\sum}_{i=1}^m\|s^t_i\|_{H_{\theta_t}^*}^2 + \sL  \cD(\bar x^t,\bar x^{t+1}) + \frac{(1-\rho)}{32 \sL}\cdot \cE_t \bigg] \leq \frac{\cM_0- \cM_T}{T}.
\end{aligned} \end{equation*}
Note $\|\bar{z}^{t+1}-\bar{z}^t\|^2_{H_{\theta_t}^*} = \|\bar{s}^t\|^2_{H_{\theta_t}^*}\leq\frac{1}{m}\sum_{i=1}^m\|s^t_i\|_{H_{\theta_t}^*}^2$, we can define an optimality measure 
\begin{equation*} \mathcal{G}(\bx^t,\by^t,\bz^t):=\frac{1}{12\eta^2}\|\bar{z}^{t+1}-\bar{z}^t\|^2_{H_{\theta_t}^*} + \frac{\sL}{\eta}\cD(\bar x^t,\bar x^{t+1}) + \frac{(1-\rho)}{32 \sL\eta}\cdot \cE_t, \end{equation*}
which is a combination of the Bregman residual of primal iterates $\cD(\bar x^t,\bar x^{t+1})$, the local normed dual residual $\|\bar{z}^{t+1}-\bar{z}^t\|^2_{H_{\theta_t}^*}$, and the consensus error $\cE_t$. With the fact that $\cM_0 = f(x^0) + \cE_0/(8\sL)$ and $\cM_T \geq \underline{f}$, we obtain the following theorem.  

\begin{theorem}
\label{thm:complexity}
Given \Crefrange{assump:Legrendre}{as:matrix} and \Cref{assumption:HRUC}, select the step size $0<\eta \leq \frac{(1-\rho)^{3}}{25\sL\lambda^2}$, and select the clipping radius $\delta$ such that  $\zeta(2\delta)\leq\frac{1-\rho}{2}$,  then
\begin{equation*}
    \min_{t=0,\ldots,T-1}\; \mathcal{G}(\bx^t,\by^t,\bz^t) \leq  \frac{f(x^0)- \underline{f}{}+\cE_0/(8\sL)}{T\cdot\eta}  = \cO\Big(\frac{1}{\eta T}\Big) 
\end{equation*} 
holds for the iterates generated by \cref{alg:DMGT}, for any positive integer $T\geq 1$.
\end{theorem}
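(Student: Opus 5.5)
The plan is to telescope the Lyapunov inequality of \Cref{prop:descent} and then use a min-versus-average argument. The hypotheses of \Cref{thm:complexity} are exactly those of \Cref{prop:descent}: $\zeta(2\delta)\leq\frac{1-\rho}{2}$, which is inherited from \Cref{lem:consensus errors}, and $\eta\leq\frac{(1-\rho)^3}{25\sL\lambda^2}$. So \eqref{eq:prop:descent} holds for every $t\in\N$.

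First, I will rewrite the decrease term of \eqref{eq:prop:descent}. By \eqref{eq:update bar z}, $\bar z^{t+1}-\bar z^t=-\bar s^t$. Since $\|\cdot\|^2_{H^*_{\theta_t}}$ is convex, Jensen's inequality gives $\|\bar s^t\|^2_{H^*_{\theta_t}}\leq\frac1m\sum_i\|s_i^t\|^2_{H^*_{\theta_t}}$. Hence
\[
\frac{1}{12\eta}\|\bar z^{t+1}-\bar z^t\|^2_{H^*_{\theta_t}}+\sL\cD(\bar x^t,\bar x^{t+1})+\frac{1-\rho}{32\sL}\cE_t\leq \cM_t-\cM_{t+1},
\]
and the left-hand side equals $\eta\,\mathcal{G}(\bx^t,\by^t,\bz^t)$. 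Summing over $t=0,\dots,T-1$ telescopes the right-hand side to $\cM_0-\cM_T$. Bounding the minimum by the average then yields $\min_t\mathcal{G}\leq(\cM_0-\cM_T)/(\eta T)$.

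Next, I will evaluate the endpoints. Initialization gives $x_1^0=\cdots=x_m^0=x^0$, so $z_i^0=\nabla h(x^0)$ for every $i$. Therefore $\bar z^0=\nabla h(x^0)$, $\bar x^0=\nabla h^*(\bar z^0)=x^0$ (Legendre duality), and $\cM_0=f(x^0)+\cE_0/(8\sL)$. In fact the $\xi$-part of $\cE_0$ vanishes, leaving only the initial gradient disagreement. For $\cM_T$, the term $\cE_T$ is nonnegative because it is a sum of squared norms under the positive definite matrix $H^*_{\theta_T}$. Also $\bar x^T=\nabla h^*(\bar z^T)\in\idom{h}\subseteq\cZ$, so \Cref{assump:LowerBound} gives $f(\bar x^T)\geq\underline f$, and thus $\cM_T\geq\underline f$.

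The one point needing care is that $\bar x^T$ is well defined. This requires $\bar z^T\in\mathrm{Im}(\nabla h)=\idom{h^*}$. It follows because each $z_i^t$ lies in this convex set, which is the domain of $\nabla h^*$ under the Legendre assumption \Cref{assump:Legrendre}. So $\bar z^T$, a convex combination of the $z_i^T$, lies in it as well. Once this is in place the argument is routine, and the bound is $\cO(1/(\eta T))$ as claimed.
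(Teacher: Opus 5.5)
Your proposal is correct and is essentially the paper's own argument. Both telescope \Cref{prop:descent}, replace $\frac1m\sum_i\|s_i^t\|^2_{H^*_{\theta_t}}$ by the smaller $\|\bar z^{t+1}-\bar z^t\|^2_{H^*_{\theta_t}}$, bound the minimum by the average, and use $\cM_0=f(x^0)+\cE_0/(8\sL)$ and $\cM_T\geq\underline f$. Your extra remark that $\bar z^T\in\mathrm{Im}(\nabla h)$, by convexity of $\idom{h^*}$, is a harmless clarification that the paper leaves implicit.
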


\subsection{Stationarity and consensus bounds}\label{subsec:Stationarity and consensus bounds}
To interpret this bound, we consider two settings: (i) $h$ is the standard quadratic kernel $h(x)=\frac{1}{2}\|x\|^2$, for which the local geometry is Euclidean and \Cref{alg:DMGT} reduces to a standard gradient tracking scheme; and (ii) $h$ is a general HRUC kernel for which the dual images of the objective sublevel sets are bounded. \vspace{0.25cm} 

\noindent\textbf{Case I: Quadratic kernel.} By \cref{theorem:HURC-List} and \cref{table:kernels}, the kernel $h(x) = \frac{1}{2}\|x\|^2$ is $\zeta$-HRUC regular with $\zeta(\delta)\equiv0$. In this situation, $\cZ=\dom{h} = \mathbb{R}^d$, and we can set $\delta=+\infty$, then the clipping operator $\clip(v) \equiv \eta v$ is always inactive (cf. \eqref{eq:clipping operator}), and the constant $\lambda = 1$ (cf. \eqref{defn:lambda}). In addition, we also have $\cD(\bar{x}^t,\bar{x}^{t+1}) = \frac{1}{2}\|\bar{x}^t-\bar{x}^{t+1}\|^2$, $H^*_{\theta_t}\equiv I$, and $\bx^t = \bz^t$. As a consequence,  
\begin{equation}
\label{eq:cG-Quadratic}
\mathcal{G}(\bx^t,\by^t,\bz^t) = \left(\frac{1}{12}+\frac{\sL\eta}{2}\right)\|\bar{y}^t\|^2 + \frac{1-\rho}{32\sL\eta}\cdot\frac{1}{m}{\sum}_{i=1}^m\left(\|y_i^t-\bar{y}^t\|^2 + \xi\|x_i^t-\bar{x}^t\|^2\right),    
\end{equation} 
where $\xi = \frac{32\sL^2\lambda^2}{(1-\rho)^2}$ is defined in \Cref{lem:consensus errors}. Then, the following inequality holds: 
\begin{equation}\label{eq:Euclidean s bound}
\left\|\nabla f(\bar x^t)\right\|^2 
    \leq 2\, \Big\|\frac{1}{m}{\sum}_{i=1}^m (\nabla f(\bar x^t) - y^t_i)\Big\|^2 + 2\|\bar{y}^t\|^2 \leq \frac{2}{m}{\sum}_{i=1}^m \left\|\nabla f(\bar x^t) - y^t_i\right\|^2 + 2\|\bar{y}^t\|^2.
\end{equation} 
Moreover, as $\bar y^t=\frac{1}{m}\sum_{j=1}^m \nabla f_j(x_j^t)$ always holds, we have 
\begin{equation}\label{eq:Euclidean gradient bound}
\begin{aligned}
\|\nabla f(\bar x^t) - y_i^t\|^2 
    & =  \Big\|(\bar y^t - y_i^t)+ \frac1m{\sum}_{j=1}^m \big(\nabla f_j(\bar x^t) - \nabla f_j(x_j^t) \big)  \Big\|^2 \\
    & \leq  2\| \bar y^t - y_i^t \|^2 + \frac{2\sL^2}{m}{\sum}_{j=1}^m \|\bar x^t-x_j^t\|^2. 
\end{aligned}
\end{equation}
Then combining the inequalities \eqref{eq:cG-Quadratic}-\eqref{eq:Euclidean gradient bound} and \Cref{thm:complexity} indicates that 
$$\min_{0\leq t\leq T-1}\left\{\|\nabla f(\bar{x}^{t})\|^2 + \frac{\sL^2}{m}{\sum}_{i=1}^m\|\bar{x}^t-x_i^t\|^2\right\} = \cO\left(\frac{\sL(f(x^0)- \underline{f}{}+\cE_0/(8\sL))}{(1-\rho)^{3}T}\right) \qquad\forall\; T\geq 1.$$

\noindent\textbf{Case II: General HRUC kernel.} For a general Legendre-type HRUC kernel, we impose the following boundedness condition on the dual images of the objective sublevel sets.
\begin{assumption}\label[assumption]{as:bounded level}
    For any given $c\in\R$, the set $\{\nabla h(x):f(x) \leq c\}$ is bounded. 
\end{assumption}
This typically holds when $f$ explodes on $\mathrm{bdry\,}\dom{h}$. Valid examples include the D-optimal design problem \cite{atwood1969optimal,kiefer1960equivalence,lu2018relatively}, and various volumetric optimization problems \cite{anstreicher2000volumetric,lu2018relatively}, associated with the Burg's  entropy kernel, the quadratic inverse problem \cite{bolte2018first}, and more general problems with coercivity, associated with polynomial or power kernels, etc. As a consequence, define the set of all clipped iteration indices over the full trajectory as 
$${\cal T}:=\{t\in\N:\exists \,i\in[m] \text{ such that } \eta y_i^t \neq s_i^t\}.$$ 
The next proposition shows that there exists a finite constant $T_{\rm clip}<+\infty$ that only depends on the problem, kernel, and initialization, such that  $|{\cal T}|\leq T_{\rm clip}$, whose detailed definition and proof can be found in \cref{appendix:proof of prop:finite many clip}. Therefore, within any sufficiently long finite running time $T$, most updates in \Cref{alg:DMGT} remain unclipped, allowing a simple interpretation of \Cref{thm:complexity}. 
\begin{proposition}\label{prop:finite many clip}
    Under the setting of \Cref{thm:complexity}, suppose \Cref{as:bounded level} holds in addition. Then the clipping in \Cref{alg:DMGT} happens at most finitely many times.
\end{proposition}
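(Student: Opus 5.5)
The approach is to combine the monotone Lyapunov sequence of \Cref{prop:descent} with \Cref{as:bounded level}. Together they confine every dual iterate to a bounded set on which $\nabla h^*$ and the gradients are uniformly controlled. A summability argument then shows that only finitely many trackers $y_i^t$ can be large, and those are the only ones that get clipped.

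\textbf{Step 1: a bounded dual region.} Since $\cM_t$ is non-increasing and $\cE_t\ge 0$, we have $f(\bar x^t)\le \cM_t\le \cM_0$ for all $t$. By \Cref{as:bounded level}, the averages $\bar z^t=\nabla h(\bar x^t)$ then lie in a bounded set $B_0:=\{\nabla h(x): f(x)\le \cM_0\}$. \Cref{lem:iterates bounds} gives $\|z_i^t-\bar z^t\|\le \frac{\sqrt{3m}}{1-\rho}\delta$ and $\|\bar z^{t+1}-\bar z^t\|\le\delta$. Hence all $z_i^t$, all $\bar z_{\theta_t}$, and all points on the segments between them lie in the bounded enlargement $B:=B_0+\bar{\mathbb B}(0,(1+\sqrt{3m}/(1-\rho))\delta)$.

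\textbf{Step 2: uniform constants on the region.} I would next show that $\cl B$ stays inside $\mathrm{Im}(\nabla h)$, so that $\nabla h^*$ maps it to a compact subset of $\idom{h}$. The cleanest route is to apply the sublevel-set argument to the closed set $\{x: f(x)\le \cM_0\}\cap\idom{h}$ and use essential smoothness. Alternatively, one can simply note that $\mathrm{Im}(\nabla h)=\idom{h^*}$ is open and that HRUC keeps $\nabla^2 h^*$ comparable within a $\delta$-neighbourhood. The latter chains finitely many $\delta$-balls to cover $B$, which gives uniform two-sided bounds $c_1 I\preceq \nabla^2h^*(z)\preceq c_2 I$ on $B$. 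With these bounds, all local norms $\|\cdot\|_{H^*_{\theta_t}}$ are uniformly equivalent to the Euclidean norm along the trajectory. They also give a uniform Euclidean bound $G:=\sup_{z\in \cl B}\max_i\|\nabla f_i(\nabla h^*(z))\|<\infty$.

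\textbf{Step 3: summability forces finiteness.} Summing \eqref{eq:prop:descent} over all $t$ and using $\cM_t\ge\underline f$ gives
\[
\sum_{t}\Big[\tfrac{1}{12\eta m}\textstyle\sum_i\|s_i^t\|^2_{H^*_{\theta_t}}+\tfrac{1-\rho}{32\sL}\cE_t\Big]\le \cM_0-\underline f.
\]
Suppose $t\in\cT$, so some $i$ has $\eta\|y_i^t\|>\delta$ and hence $\|s_i^t\|=\delta$. Then $\frac{1}{m}\sum_i\|s_i^t\|^2_{H^*_{\theta_t}}\ge c_1\delta^2/m$. Each clipped iteration therefore consumes at least $c_1\delta^2/(12\eta m)$ of a finite budget, which yields
\[
|\cT|\le T_{\rm clip}:=\frac{12\eta m(\cM_0-\underline f)}{c_1\delta^2}.
\]
This bound depends only on the problem, the kernel, and the initialization. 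Note that Step 3 needs only the lower eigenvalue bound $c_1$ together with the boundedness from Step 1; the bound $G$ is useful only as a cross-check.

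\textbf{Main obstacle.} I expect Step 2, the positivity of $c_1=\inf_{z\in B}\lambda_{\min}(\nabla^2h^*(z))$, to be the delicate point. Boundedness of the dual set must be shown to exclude blow-up of $\nabla^2 h$, i.e.\ degeneracy of $\nabla^2h^*$. I would handle this with HRUC directly. Fix $z_0=\nabla h(x^0)$ and connect any $z\in B$ to $z_0$ by a straight segment inside the convex set $\mathrm{Im}(\nabla h)\supseteq B$; convexity holds because it is the interior of $\dom{h^*}$. Then iterate the comparison \eqref{eqn:HRUC-Hessian-comparison} across $\lceil \mathrm{diam}(B)/\delta\rceil$ steps to obtain
\[
\nabla^2h^*(z)\succeq (1+\zeta(\delta))^{-\lceil \mathrm{diam}(B)/\delta\rceil}\,\nabla^2h^*(z_0).
\]
This gives an explicit $c_1$ that requires no compactness in the primal space.
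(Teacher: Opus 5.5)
Your argument is correct and is essentially the paper's proof. Monotonicity of $\cM_t$ together with \Cref{as:bounded level} bounds $\bar z^t_{\theta_t}$, HRUC turns this into a uniform lower bound $H^*_{\theta_t}\succeq\mu^* I$, and each clipped iteration then consumes at least $\mu^*\delta^2/(12m\eta)$ of the finite budget $\cM_0-\underline f$. The differences are cosmetic: the paper applies \eqref{eqn:HRUC-Hessian-comparison} once at radius $R$ (legitimate because HRUC is assumed for every $\delta\ge0$), which gives the sharper $\mu^*=\lambda_{\min}(\widehat H^*)/(1+\zeta(R))$ instead of your chained factor $(1+\zeta(\delta))^{-N}$; and since the bound is only needed at the points $\bar z^t_{\theta_t}$, which lie in the convex set $\mathrm{Im}(\nabla h)$, you can drop your unproved claim $B\subseteq\mathrm{Im}(\nabla h)$ and the compactness and $G$ discussion in Step 2.
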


\paragraph{Local-norm stationarity and dual consensus}
Given the finite bound on $|\mathcal{T}|$, and suppose \cref{alg:DMGT} runs for at least $T\geq 2|\mathcal{T}|$ iterations. Let us denote $\mathcal{T}^c:=\{0,\cdots,T-1\}\backslash\mathcal{T}$ the unclipped steps. Then following \eqref{eq:Euclidean s bound} and \eqref{eq:Euclidean gradient bound}, the next inequality holds for all $t\in\mathcal{T}^c$:
\begin{equation*} 
\left\|\nabla f(\bar x^t)\right\|^2_{H_{\theta_t}^*} \leq \frac{4}{m}{\sum}_{i=1}^m\| \bar y^t - y_i^t \|^2_{H_{\theta_t}^*} + \frac{4}{m}{\sum}_{i=1}^m\left\|\nabla f_i(\bar x^t) - \nabla f_i(x^t_i)\right\|^2_{H_{\theta_t}^*} + 2\|\bar{y}^t\|^2_{H_{\theta_t}^*}. 
\end{equation*}
Note that the clipping step (\Cref{lem:iterates bounds}) guarantees that the dual covector $\bar{z}_{\theta_t}:=(1-\theta_t)\bar{z}^t+\theta_t\bar{z}^{t+1}$ that defines the matrix $H^*_{\theta_t}$ satisfies $\|\bar{z}_{\theta_t}-\bar{z}^t\|\leq \delta$, and $\|z_i^{t}-\bar{z}^t\|\leq\frac{\sqrt{3m}}{1-\rho}\cdot\delta$. Then the dual Lipschitz bound in \Cref{lem:dual Lipcshitz} holds with constant $\zeta\big((1+\frac{\sqrt{3m}}{1-\rho})\delta\big)$:   
\begin{equation*} 
    \| \nabla f_i(\bar{x}^t) - \nabla f_i(x_i^t) \|_{H^*_{\theta_t}} \leq \sL \Big[1+\zeta\big((1+\tfrac{\sqrt{3m}}{1-\rho})\delta\big)\Big] \cdot \left\| \bar{z}^t - z_i^t \right\|_{H^*_{\theta_t}}\leq \sL \lambda \left\| \bar{z}^t - z_i^t \right\|_{H^*_{\theta_t}}. 
\end{equation*}
Combining the above two inequalities, and the fact that  $\bar{s}^t = \eta\bar{y}^t$ for $t\in\mathcal{T}^c$, we obtain  
\begin{equation*}
\begin{aligned}
    & \min_{0\leq t\leq T-1} \!\bigg\{\!\|\nabla f(\bar{x}^{t})\|^2_{H^*_{\theta_t}} + \frac{\sL^2}{m}{\sum}_{i=1}^m\|\bar{z}^t-z_i^t\|^2_{H^*_{\theta_t}}\!\bigg\} \leq \min_{t\in\mathcal{T}^c} \left\{\!\|\nabla f(\bar{x}^{t})\|^2_{H^*_{\theta_t}} + \frac{\sL^2}{m}{\sum}_{i=1}^m\|\bar{z}^t-z_i^t\|^2_{H^*_{\theta_t}}\right\}\\
& \lesssim \frac{1}{T-|\mathcal{T}|}\sum_{t\in\mathcal{T}^c}  \mathcal{G}(\bx^t,\by^t,\bz^t) \leq  \cO\left(\frac{\sL(f(x^0)- \underline{f}{}+\cE_0/(8\sL))}{(1-\rho)^{3}T}\right) \quad\mbox{for all}\quad T\geq 2|\mathcal{T}|,    
\end{aligned}
\end{equation*}
where $\lesssim$ hides the numerical constants. The gradient size is measured by its local norm $\|\cdot\|_{H^*_{\theta_t}}$ and the consensus is measured in the dual space for $\bar{z}^t-z_i^t = \nabla h(\bar{x}^t)-\nabla h(x_i^t)$. To connect this local-norm stationarity and dual consensus bound to the original problem, we next convert it into a bound on the Euclidean gradient norm and primal consensus error.

\paragraph{Euclidean stationarity and primal consensus}
The boundedness of the dual images of objective sublevel sets in \Cref{as:bounded level}, together with the Lyapunov descent in \Cref{prop:descent}, ensures that $\{\bar z^t\}$ is bounded. The dual-disagreement bound $\|z_i^t-\bar z^t\|\leq\frac{\sqrt{3m}}{1-\rho}\delta$ from \Cref{lem:iterates bounds} then keeps every $z_i^t$ uniformly close to $\bar z^t$. Hence, the line segments joining $\bar z^t$ and $z_i^t$ remain in a bounded dual region. Moreover, due to $\nabla^2h^*(z)=[\nabla^2h(\nabla h^*(z))]^{-1}$ for all $z\in\mathrm{Im}(\nabla h)$, the HRUC comparison \eqref{eqn:HRUC-Hessian-comparison} implies that there exists $0<L^*<\infty$ such that 
\[
\nabla^2h^*\big((1-\tau)z_i^t+\tau\bar z^t\big)\preceq L^*I,\qquad \forall\,t\in\N,\ i\in[m],\ \tau\in[0,1].
\]
Together with $H_{\theta_t}^*\succeq\mu^*I$ (see \eqref{eq:prop Hessian bound}), this yields $\|\nabla f(\bar x^t)\|^2
\leq \frac{1}{\mu^*}\|\nabla f(\bar x^t)\|_{H_{\theta_t}^*}^2$ and 
\[
\|\bar x^t-x_i^t\|^2
=\left\|\int_0^1\nabla^2h^*\big((1-\tau)z_i^t+\tau\bar z^t\big)(\bar z^t-z_i^t)\,\mathrm d\tau\right\|^2\leq (L^*)^2\|\bar z^t-z_i^t\|^2\leq \frac{(L^*)^2}{\mu^*}\|\bar z^t-z_i^t\|_{H_{\theta_t}^*}^2.
\]
Substituting these estimates into the preceding bound yields
\[
\min_{0\leq t\leq T-1}\left\{\|\nabla f(\bar x^t)\|^2+\frac{\sL^2}{m}{\sum}_{i=1}^m\|\bar x^t-x_i^t\|^2\right\}=\cO\left(\frac{\sL(f(x^0)-\underline f+\cE_0/(8\sL))}{(1-\rho)^3T}\right),\qquad T\geq2|\mathcal T|.
\] 
\section{Numerical experiments}\label{sec:experiment}
We test the practical performance of our proposed {\sf DMGT} method (see \Cref{alg:DMGT}) 
on phase retrieval and Poisson inverse problems. All experiments are run in \texttt{MATLAB} R2024b on a macOS system with M1 Pro processor and 16 GB of memory. 
\paragraph{Baseline algorithms}
We compare {\sf DMGT} with decentralized baselines from the mirror descent and dual averaging literature.
We include {\sf DMD}, the distributed mirror descent method of \cite{li2016distributed}.
We also include {\sf DDA}, the distributed dual averaging method with gradient tracking ({\sf GT}) \cite{liu2023rate}, building on \cite{duchi2011dual,Qu2018Harnessing}.
Since both {\sf DMGT} and {\sf DDA} incorporate GT, for fair comparison with {\sf DMD}, we additionally consider a {\sf GT} variant of {\sf DMD}, denoted {\sf DGT}:
$$
\bz^{t+1} = \nabla \bh\big(W\bx^t\big) - \eta\by^t,\qquad
\bx^{t+1} = \nabla \bh^*(\bz^{t+1}),\qquad
\by^{t+1} = W\by^t + \nabla \bff(\bx^{t+1}) - \nabla \bff(\bx^t).
$$
In all experiments, agents communicate over a network encoded by a mixing matrix $W\in\mathbb{R}^{m\times m}$ associated with an Erd\H{o}s--R\'enyi random graph.

\paragraph{Performance evaluation and parameter tuning} To evaluate performance and consensus, we use the stationarity measure studied in  \Cref{subsec:Stationarity and consensus bounds}, i.e.,
    $$
    \|\nabla f(\bar x^t)\|^2+\frac{\sL^2}{m}{\sum}_{i=1}^m\|\bar x^t-x_i^t\|^2.
    $$
This indicates how close the agents' iterates are to a common stationary point.
Similar measures can be found in \cite{sun2019distributed,hajinezhad2019zone}.
Each method selects the best-performing step size $\eta$ from the logarithmic grid $\{10^{0},10^{\pm1},\ldots,10^{\pm4}\}$, and {\sf DMGT} tunes the clipping parameter $\delta$ over the same grid.

\subsection{Phase retrieval}
\begin{figure}[t]
    \centering
    \includegraphics[scale=0.24]{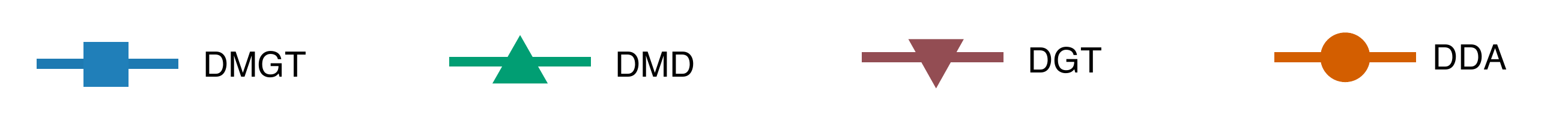}
\vspace{2mm}

\includegraphics[width=\linewidth]{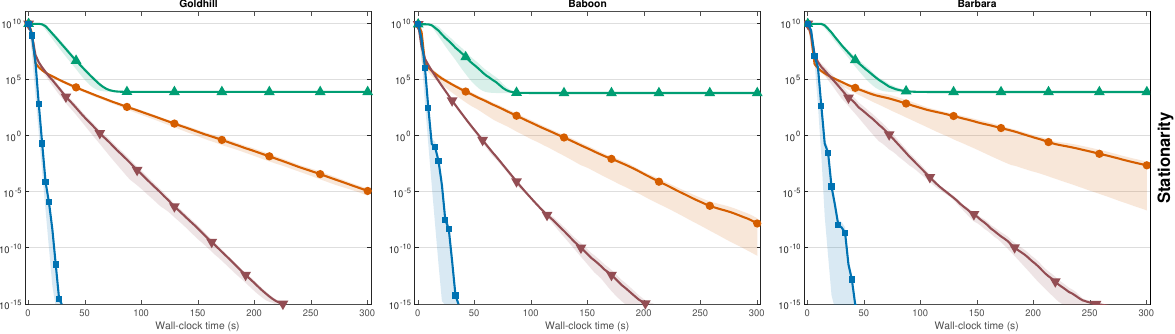}
\vspace{-.6cm}
    \caption{Performance of the decentralized methods on the phase retrieval problem. The curves show the mean, and the shaded regions show the range.}
    \label{fig:exp-1}
\end{figure}
We consider the phase retrieval problem, which is the distributed quadratic inverse problem studied in \cite{zhao2018distributed}. The problem is formulated as 
\begin{equation*}
\label{eq:quardratic inverse}
\mathop{\rm minimize}_{{x}_1,\cdots,\, {x}_m \in\mathbb{R}^d}\,\, \frac{1}{m}{\sum}_{i=1}^{m}\,\frac{1}{n}{\sum}_{\ell=1}^{n} \big(b_{i,\ell}-\langle a_{i,\ell},x_i\rangle^2\big)^2,  \qquad \text{s.t.} \quad x_i = x_j, \forall\, i,j\in[m],
\end{equation*}
where $a_{i,\ell}\in\mathbb{R}^d$ are sampling vectors and $b_{i,\ell}=\langle a_{i,\ell}, {x}_{\rm true}\rangle^2+\epsilon_{i,\ell}$ are measurement data  with ${x}_{\rm true}$ being the signal vector and $\epsilon_{i,\ell}$ being i.i.d. Gaussian noise.

\paragraph{Data generation} We consider three images: \texttt{Goldhill}, \texttt{Baboon} and \texttt{Barbara}. 
For each image, we reshape it into a vector and normalize it to obtain  $x_{\mathrm{true}}\in[0,1]^{d}$ with $d=1024$.
We use $m=32$ agents, where each agent $i\in[m]$ has a local dataset consisting of $n=200$ sensing vectors and the associated measurements.
For each $i\in[m]$ and $\ell\in[n]$, the sensing vector is sampled as $a_{i,\ell}\sim \mathcal{N}(0,I)$ and the additive noise is generated by $\epsilon_{i,\ell}\sim \mathcal{N}(0,0.1)$. 
To assess variability from data generation, we repeat each experiment with three random seeds. For each seed, we resample the sensing vectors $\{a_{i,\ell}\}$ and noise $\{\epsilon_{i,\ell}\}$, then generate the corresponding measurements $\{b_{i,\ell}\}$. The network and initialization are fixed across random seeds.

\paragraph{Kernel selection and initialization} We adopt the quartic kernel $h(x):=\frac{1}{4}\|x\|^4 + \frac12\|x\|^2$ for {\sf DMD}, {\sf DGT} and {\sf DMGT}. For {\sf DDA}, we consider a shifted kernel $\tilde h(x):=h(x-x^0)$, which is tailored to dual averaging-type updates to ensure that the initial point $x^0$ is the minimizer of the kernel \cite{liu2023rate}.  
By \cite[Lemma~5.1]{bolte2018first}, 
the phase retrieval problem is smooth 
relative to both $h$ and $\tilde h$.  All methods are initialized with a \emph{random positive Gaussian} vector $x^{0}={\rm abs}({\hat x^{0}})$, where $\hat x^{0}\sim{\cal N}(0,I)$.

\paragraph{Performance comparison}
As shown in \Cref{fig:exp-1}, {\sf DMGT} converges faster than the other methods across all three images.
{\sf DMD} initially reduces the stationarity measure but then stagnates because its consensus term remains high.
{\sf DGT} yields an improvement over {\sf DMD} by incorporating gradient tracking.
{\sf DDA} also decreases the stationarity measure steadily, but at a slower rate than {\sf DGT} and {\sf DMGT}.

In the reported runs, clipping occurs only during the early iterations of {\sf DMGT} and remains inactive thereafter. Across the three images and random seeds, the clipped steps account for at most 3\% of the total steps.

\begin{figure}[t]
    \centering
    \includegraphics[scale=0.24]{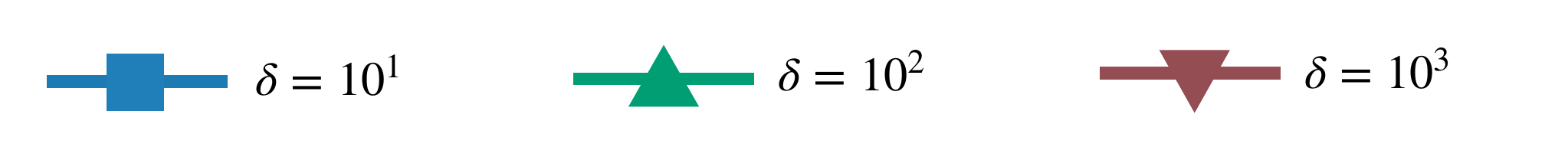}
\vspace{2mm}

\includegraphics[width=.98\linewidth]{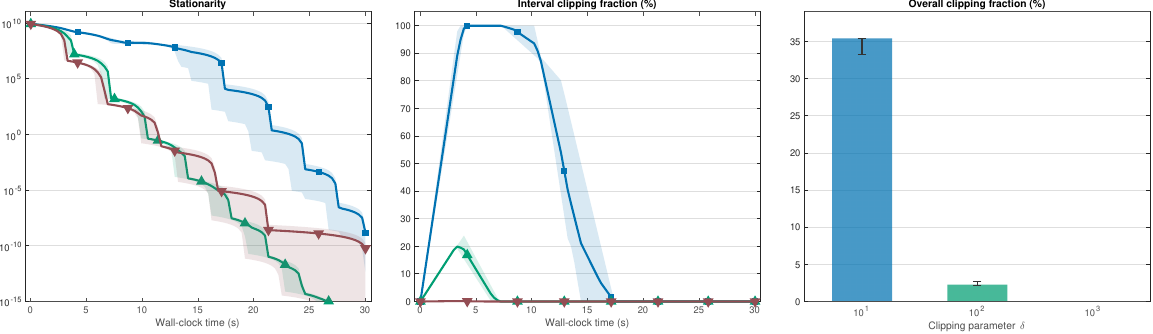}
    \caption{Sensitivity of {\sf DMGT} to the clipping parameter $\delta$ on \texttt{Goldhill}.}
    \label{fig:exp-1.5}
\end{figure}

\paragraph{Sensitivity to the clipping parameter} To assess the effect of $\delta$, we vary it for {\sf DMGT} on \texttt{Goldhill} and report the stationarity measure and the fraction of clipped steps against wall-clock time. As shown in \Cref{fig:exp-1.5}, larger values of $\delta$ lead to fewer clipped steps and generally yield lower stationarity values.

\subsection{Poisson inverse}\label{subsec:poisson inverse}
We evaluate the algorithms on Poisson inverse problems. We first conduct experiments on synthetic data, then assess performance on real image reconstruction tasks.
\subsubsection{Synthetic data}
Consider the Poisson inverse problem:
\[\mathop{\rm minimize}_{x_1,\cdots,x_m\in\mathbb{R}^d_{+}} \;\frac{1}{m}\,{\sum}_{i=1}^m {\cal D}_{\mathrm{KL}}(b_i,A_i x_i) \quad \text{s.t.}\quad x_i=x_j,\;\forall\, i,j\in[m].\]
Here, ${\cal D}_{\mathrm{KL}}(\cdot,\cdot)$ denotes the generalized Kullback--Leibler divergence \cite{csiszar1991least}. 

 \begin{figure}[t]
    \centering
\includegraphics[scale=0.24]{plots/legend.pdf}
\vspace{2mm}

\includegraphics[width=\linewidth]{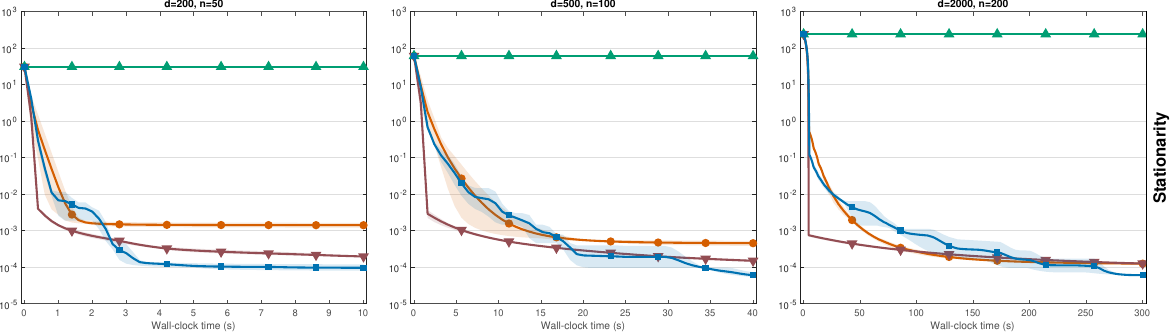}
\vspace{-.6cm}
    \caption{Performance of the decentralized methods on the synthetic Poisson inverse problem. The curves show the mean, and the shaded regions show the range. 
}
    \label{fig:exp-2}
\end{figure}
\paragraph{Data generation}
The ground-truth vector $x_{\mathrm{true}}\in\Rd_{+}$ is sampled uniformly from $[0,1]^d$.
We use $m=32$ agents.
On each agent $i\in[m]$, the local measurement matrix $A_i\in\mathbb{R}^{n\times d}_{+}$ is generated by sampling i.i.d.\ entries from a Student-$t$ distribution with $5$ degrees of freedom and taking absolute values.
Local observations $b_i\in\mathbb{R}^{n}_+$ are drawn according to the Poisson model $
b_i \sim \mathrm{Poisson}(A_i \, x_{\mathrm{true}})$ 
with the Poisson distribution applied componentwise. We test three scales: $(d,n)=(200,50)$, $(d,n)=(500,100)$, $(d,n)=(2000,200)$. 
We repeat each experiment with three random seeds, resampling the measurement matrices $\{A_i\}$ and corresponding observations $\{b_i\}$. The ground-truth vector, network, and initialization remain fixed.
\begin{figure}[t]
    \centering
\hspace{1cm}\includegraphics[scale=0.24]{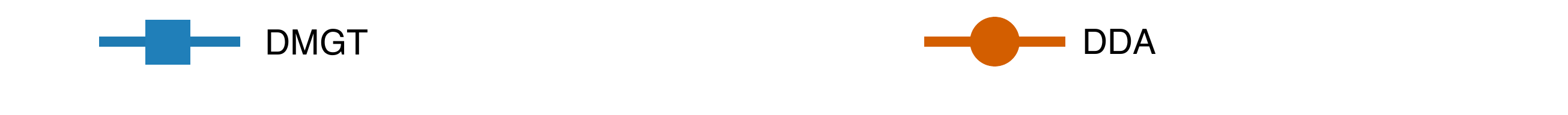}
\vspace{2mm}
\includegraphics[width=\linewidth]{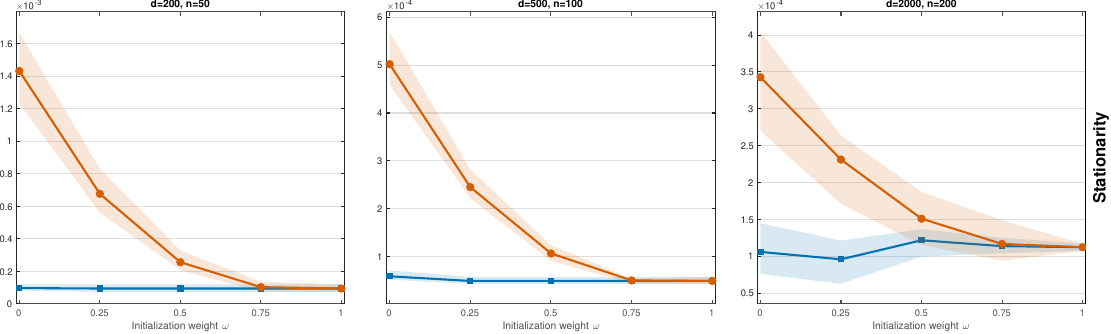}
\vspace{-.6cm}
    \caption{Best stationarity values attained under $x^0(\omega)$ within wall-clock budgets of $10$, $40$, and $300$ seconds for the three problem scales, respectively. The curves show the mean, and the shaded regions show the range.}
    \label{fig:initialization sensitivity}
\end{figure}
\paragraph{Kernel selection and initialization} For {\sf DMD}, {\sf DGT} and {\sf DMGT}, we adopt the regularized Burg's entropy $
h(x):=-\sum_{i=1}^{d}\log(x_i)+\frac{1}{2}\|x\|^{2}.$ Note that Poisson inverse problems are smooth relative to $h$, see \cite{bauschke2017descent}. 
For {\sf DDA}, we use the shifted kernel $
\tilde h(x):=h(x-x^{0}+\one)$, as suggested in \cite{liu2023rate}, to ensure the prescribed initial point $x^0\in\Rd_{++}$ is the minimizer of the kernel. 
All methods are initialized with a \emph{random positive Gaussian} vector $
x^{0}={\rm abs}({\hat x^{0}})$, $\hat x^{0}\sim{\cal N}(0,I)$, which is
consistent with the phase retrieval experiments. 
\paragraph{Performance comparison} As shown in \Cref{fig:exp-2}, {\sf DMGT} attains the lowest stationarity levels across all three problem scales, with fewer than 0.1\% clipped steps across all tested problems and random seeds. {\sf DMD} stagnates because its consensus error remains high, and gradient tracking enables {\sf DGT} to reduce this error and improve stationarity. {\sf DDA} makes initial progress but soon stagnates. Since its shifted kernel depends on the initial point, we next examine whether this stagnation is sensitive to initialization. 

\paragraph{Sensitivity to initialization}
Specifically, we compare {\sf DDA} and {\sf DMGT} under the weighted initializations:
$$ 
x^0(\omega):=(1-\omega){\rm abs}(\hat x^0)+\omega\one\qquad \text{with}\quad  \hat x^0\sim\mathcal N(0,I) \quad \text{and} \quad \omega\in[0,1].
$$
Here, $\omega=0$ yields the random initialization used in \Cref{fig:exp-2}. When $\omega=1$, we have $x^0=\one$, and the shifted kernel of {\sf DDA} reduces to the regularized Burg's entropy.

\Cref{fig:initialization sensitivity} reports the best stationarity value attained by each algorithm within the wall-clock budget. The result for {\sf DMGT} changes little with $\omega$, but {\sf DDA} is sensitive to the initialization.

\subsubsection{Image reconstruction}
Next, we apply these methods to reconstruct images from blurred and Poisson-corrupted observations.

\paragraph{Problem setup} For implementation, we crop all images to square matrices.
Let $X\in\mathbb{R}^{d\times d}_{+}$ denote the image, and let $\{{\cal A}_i\}_{i=1}^{m}$ be linear imaging operators, where each map ${\cal A}_i:\mathbb{R}^{d\times d}_+\to\mathbb{R}^{d\times d}_{+}$ models a blur operation\footnote{Implemented via \texttt{imfilter} in MATLAB.}.
Each agent $i\in[m]$ observes a blurred and Poisson-corrupted image $B_i\in\mathbb{R}^{d\times d}_{+}$ generated componentwise by $
B_i \sim \,\mathrm{Poisson}(\alpha\cdot {\cal A}_i(X_{\mathrm{true}}))/\alpha,$ 
where $X_{\mathrm{true}}\in[0,255]^{d\times d}$ is the ground truth and $\alpha>0$ controls the noise intensity.
Throughout, we set $\alpha=10$. In this experiment, we consider images with different resolutions: \texttt{Cameraman} ($d=256$) and \texttt{Peppers} ($d=512$).
The ground-truth images and the corrupted observations are shown in \Cref{fig:exp-3}.
\begin{figure}[t]
    \centering
\includegraphics[width=\linewidth]{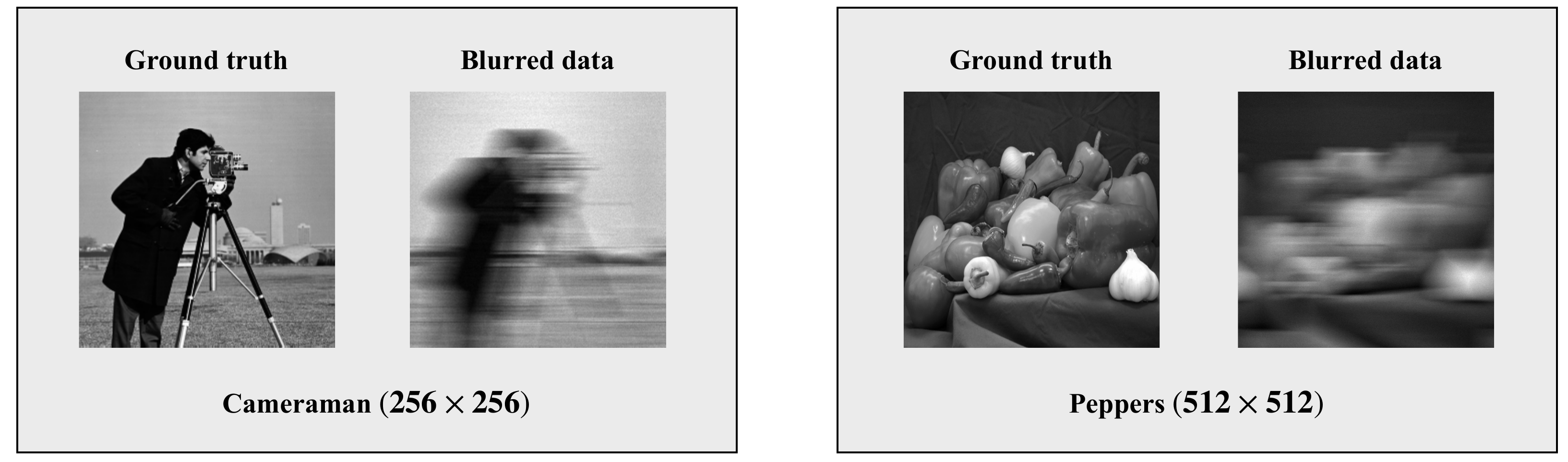}
\vspace{-4mm}
    \caption{The blurred image is the observation (from one of the agents) generated using motion blur with length $50$ for \texttt{Cameraman} and $100$ for \texttt{Peppers}, and corrupted by Poisson noise.}
    \label{fig:exp-3}
\end{figure}

To reconstruct the image from blurred and corrupted observations, we solve the Poisson inverse problem with a smoothed total variation (TV) penalty:
\[
\mathop{\rm minimize}_{X_1,\cdots,X_m\in\mathbb{R}^{d\times d}_{+}} \;
\frac{1}{m}{\sum}_{i=1}^{m} \Big[{\cal D}_{\mathrm{KL}}\!\big(B_i, {\cal A}_i(X_i)\big)
\;+\;
\lambda\cdot\mathrm{TV}(X_i)\Big]\quad \text{s.t.}\quad X_i=X_j,\;\forall\,i,j\in[m],
\]
where $\lambda=10^{-4}$ and the number of agents $m=8$.
The smoothed TV regularizer is defined as
\[\mathrm{TV}(X)
:= {\sum}_{p=1}^{d}{\sum}_{q=1}^{d}
\sqrt{\big[X_{(p+1,q)}-X_{(p,q)}\big]^2+\big[X_{(p,q+1)}-X_{(p,q)}\big]^2+\varepsilon_{\mathrm{TV}}^2},\]
with $\varepsilon_{\mathrm{TV}}=10^{-10}$ and the boundary convention $X_{(d+1,q)}=X_{(d,q)}$, $X_{(p,d+1)}=X_{(p,d)}$.

\paragraph{Kernel selection}
 For {\sf DMD}, {\sf DGT} and {\sf DMGT}, we employ the regularized Burg entropy
$h(X):=-\sum_{p=1}^{d}\sum_{q=1}^{d}\log\!\big(X_{(p,q)}\big)+\frac{1}{2}\|X\|_{F}^{2}$.
For {\sf DDA}, we use the shifted kernel $\tilde h(X):=h(X-X^{0}+\one)$, where $\one$ denotes the all-ones matrix.

\begin{figure}[t]
    \centering
\includegraphics[width=\linewidth]{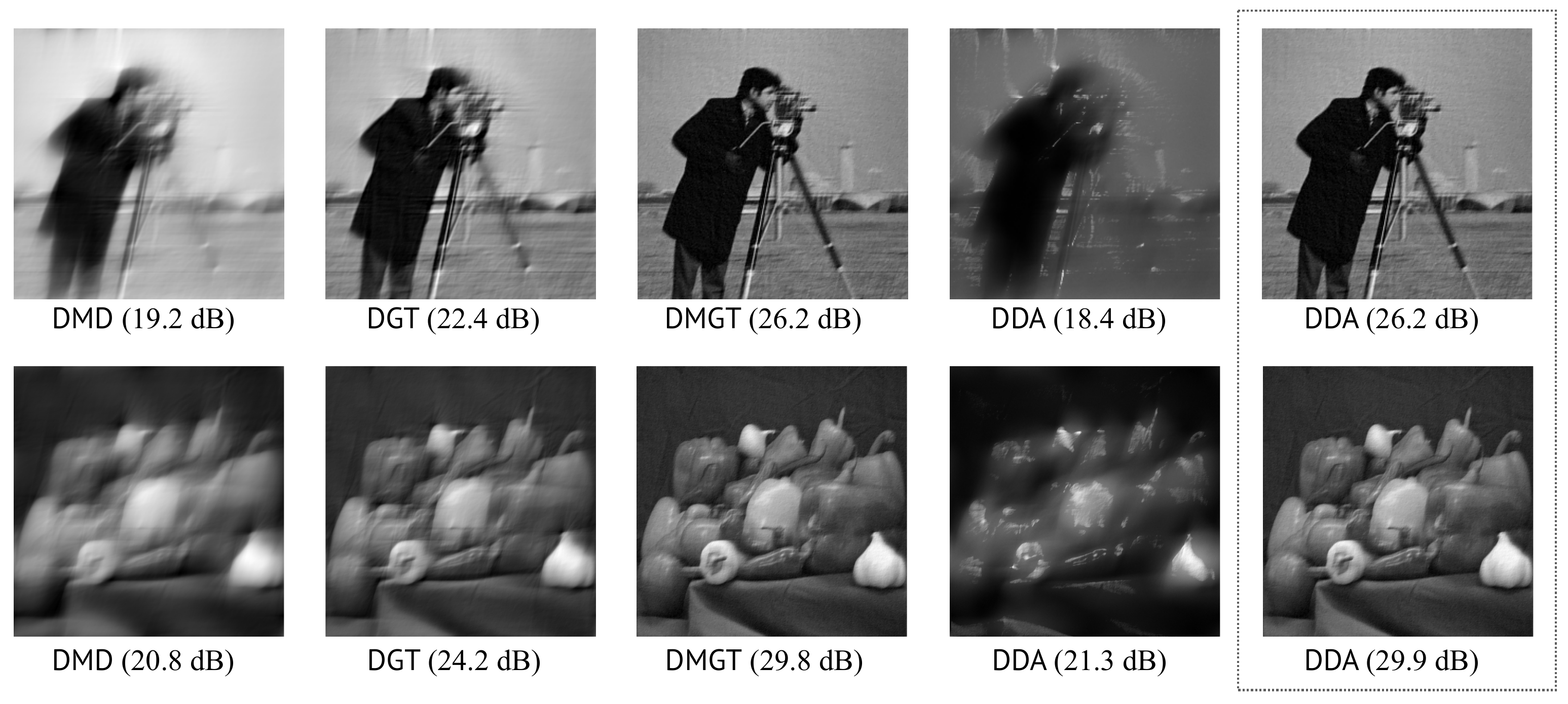}
\vspace{-6mm}
    \caption{Recovered images (with PSNR in dB) after $60$ seconds of wall-clock time for \texttt{Cameraman} (top row) and $240$ seconds for \texttt{Peppers} (bottom row).
The first four columns report reconstructions produced by {\sf DMD}, {\sf DGT}, {\sf DMGT} and {\sf DDA} under the observation-based initialization.
For {\sf DDA}, we additionally display the reconstruction obtained from a favorable initialization (dotted frame, last column).
}
    \label{fig:exp-4}
\end{figure}
\paragraph{Initialization} In imaging, a standard choice is the observation-based initialization $X^{0}_{\rm obs}:=\frac{1}{m}\sum_{i=1}^{m} B_i$.
Since {\sf DDA} is sensitive to initialization, we report its results under both $X^{0}_{\rm obs}$ and a favorable initialization $X^{0}_{\rm fav}$. Following the initialization-sensitivity experiment (see \Cref{fig:initialization sensitivity}), $X^{0}_{\rm fav}$ is selected by tuning the weight in a convex combination of $X^{0}_{\rm obs}$ and the all-ones matrix.

\paragraph{Performance comparison} We report reconstructions from blurred and Poisson-corrupted observations, together with the peak signal-to-noise ratio (PSNR), after $60$ and $240$ seconds of wall-clock time for \texttt{Cameraman} and \texttt{Peppers}, respectively\footnote{Since \texttt{Peppers} contains four times as many pixels as \texttt{Cameraman}, we use four times the wall-clock budget.}. 
As shown in \Cref{fig:exp-4}, {\sf DMGT} produces sharp reconstructions for both images. {\sf DMD} produces lower-quality reconstructions, and {\sf DGT} improves on these results through gradient tracking. With the observation-based initialization, {\sf DDA} produces poor reconstructions (fourth column of \Cref{fig:exp-4}). With the favorable initialization (dotted frame), its performance is comparable to {\sf DMGT}. The consensus error of {\sf DMD} remains above $10$, and all other methods attain consensus errors on the order of $10^{-5}$.  Clipping of {\sf DMGT} remains inactive throughout the reconstructions of both images.


%
%
%
\appendix
\section{Proof of \texorpdfstring{\Cref{lem:contraction}}{Lemma \getrefnumber{lem:contraction}}}
\label{appendix: Lemma-contraction} 
\begin{proof}
    Let $J=\frac1m\,\one\one^\top$, then direct computation gives $(I-J)W=(W-J)(I-J)$. Together with the update $\bv^+=W\bv + \bu$, we obtain
    \begin{equation*} \begin{aligned}
        &\hspace{5mm}(I-J)\bv^+  [H^+]^{\frac12} = (I-J)W\bv  [H^+]^{\frac12} + (I-J)\bu [H^+]^{\frac12} \\
        &=(W-J) (I-J)\bv H^{\frac12} H^{-\frac12}[H^+]^{\frac12} + (I-J)  \bu H^{\frac12} H^{-\frac12}[H^+]^{\frac12}.
    \end{aligned} \end{equation*}
    From the inequality $\|ABC\|_F \leq \|A\|\|B\|_F\|C\|$ for all $A,B,C$ of compatible dimensions, we deduce
\begin{equation}\label{eq:Appendix contraction}
\begin{aligned}
    \big\|(I\!-\!J)\bv^+ [H^+]^{\frac12}\!\big\|_{\!F}  &\leq \big\| H^{-\frac12}[H^+]^{\frac12} \big\| \Big( \|W\!-\!J\| \|(I\!-\!J)\bv  H^{\frac12}\|_F + \|I\!-\!J\| \| \bu H^{\frac12}\|_F \Big)\\
    & \leq  \sqrt{1+\alpha} \Big(\rho \| (I-J)\bv H^{\frac12}\|_F + \| \bu H^{\frac12}\|_F \Big),
   \end{aligned}
\end{equation}
where the last line is by $\big\| H^{-\frac12}[H^+]^{\frac12} \big\| = \big\| [H^+]^{\frac12} H^{-\frac12} \big\| \leq \sqrt{1+\alpha}$, $\|(I-J) \|=1$ and $\|W-J\|=\rho$.
Squaring both sides and invoking the inequality $(a+b)^2 \leq (1+\varepsilon)a^2 + (1+\varepsilon^{-1})b^2$ with $\varepsilon=(1-\rho)/(1+\rho)$, we obtain 
\begin{equation*} \begin{aligned}
    \big\|(I-J)\bv^+ [H^+]^{\frac12}\big\|_F^2 & \leq  (1+\alpha)\left(\frac{2\rho^2}{1+\rho} \, \big\| (I-J)\bv H^{\frac12} \big\|_F^2 + \frac{2}{1-\rho}\, \big\| \bu H^{\frac12} \big\|_F^2 \right)\\
    & \leq  \rho \big\| (I-J)\bv H^{\frac12} \big\|_F^2 + \frac{3}{1-\rho}  \big\| \bu H^{\frac12} \big\|_F^2,
\end{aligned} \end{equation*} 
where the last inequality is because $(1+\alpha)\cdot \frac{2\rho}{1+\rho} \leq 1$ and $1+\alpha \leq \frac32$. 
Upon substituting 
$\| \bu H^{\frac12}\|_F^2={\sum}_{i=1}^m \|u_i\|^2_{H}$, $\| (I-J)\bv H^{\frac12}\|_F^2 = {\sum}_{i=1}^m\|v_i - \bar v\|^2_{H}$ and the equality 
\[ \big\|(I-J)\bv^+ [H^+]^{\frac12} \big\|_F^2 = {\sum}_{i=1}^m \big\|v_i^+ - \bar v^+ \big\|^2_{H^+},\] 
we obtain the desired bound.
\end{proof}
 
\section{Complete proof of \texorpdfstring{\Cref{theorem:HURC-List}}{Theorem \getrefnumber{theorem:HURC-List}}}\label{sec:kernel verifications}
\subsection{Proof of \texorpdfstring{\Cref{lem:verify kernels}}{Proposition \getrefnumber{lem:verify kernels}}}\label{appendix:separable kernel}
\begin{proof}
Since $h$ is separable, we have $\cR_h(x,y) = \max_{i\in[d]}\;\cR_{h_i}(x,y)$. For each $i\in[d]$,
\begin{equation*} \begin{aligned}
\cR_{h_i}(x,y) = \bigg|\frac{h_i^{\prime\prime}(x_{(i)})}{h_i^{\prime\prime}(y_{(i)})}-1 \bigg|
   &\leq \exp\Big(\big|\log\big(h_i^{\prime\prime}(x_{(i)}) \big)-\log\big(h_i^{\prime\prime}(y_{(i)}) \big) \big|\Big)-1\\
   &= \exp\Big(\big|g_i(\phi_i(x_{(i)})) - g_i(\phi_i(y_{(i)}))\big| \Big) -1 .
\end{aligned} \end{equation*}
Note that $g_i$ is $\sG$-Lipschitz and $|\phi_i(x_{(i)})-\phi_i(y_{(i)})|\leq \sH  \delta$, it follows that 
\[\exp\Big(\big|g_i(\phi_i(x_{(i)})) - g_i(\phi_i(y_{(i)}))\big| \Big) \leq \exp(\sG\sH \delta),\qquad \forall\, i\in[d].\]
Therefore, $\cR_h(x,y) = \max_{i\in[d]}\,\cR_{h_i}(x,y) \leq \exp(\sG\sH \delta)-1$.
\end{proof}

\subsection{Proof of \texorpdfstring{\Cref{prop:HRUC self-concordant}}{Proposition \getrefnumber{prop:HRUC self-concordant}}}\label{appendix:HRUC self-concordant}
\begin{proof}
Fix $x,y\in\mathbb{R}^d$ and set $v := x-y$. Define the line segment and the associated Hessian:
\begin{equation*}
    x(s) := y + s v,\; s\in[0,1]\qquad \text{and} \qquad  H(s) := \nabla^2 h\big(x(s)\big).
\end{equation*}
By strong convexity, $H(s)\succ 0$ for all $s\in[0,1]$. For any nonzero vector $u\in\mathbb{R}^d$, define
\[
    \phi_u(s) := \frac{1}{2}\log\big(u^\top H(s)\, u\big),\quad \text{then}\quad  \phi_u^\prime(s)
    = \frac{1}{2}\,\frac{u^\top \dot H(s) u}{u^\top H(s)u}
    = \frac{1}{2}\,\frac{\nabla^3 h(x(s))[u,u,v]}{u^\top H(s)u}.
\]
By the self-concordance bound \cite[Lemma 5.1.2]{Nesterov2018}, we have
\[
    \big|\nabla^3 h(x(s))[u,u,v]\big|
    \;\le\; 2M\,\|u\|_{H(s)}^2\,\|v\|_{H(s)}
    = 2M\,\big(u^\top H(s)u\big)\,\|v\|_{H(s)}.
\]
Hence $|\phi_u^\prime(s) |\;\le\; M\,\|v\|_{H(s)}.$ 
Integrating yields 
\[
    \big|\phi_u(1) - \phi_u(0)\big|
    \;\leq\;
    M \int_0^1 \|v\|_{H(s)}\,{\rm d}s \quad \Longrightarrow \quad \left|
        \log\frac{u^\top H_1 u}{u^\top H_0 u}
    \right|
    \;\leq\;
    2M \int_0^1 \|v\|_{H(s)}\,{\rm d}s,
\]
where we denote $H_0:=H(0)$ and $H_1:=H(1)$ for brevity.
Thus, for all $u\neq 0$,
\[
    \exp\Big(-2M \int_0^1 \|v\|_{H(s)}\,{\rm d}s\Big)
    \;\leq\;
    \frac{u^\top H_1 u}{u^\top H_0 u}
    \;\leq\;
    \exp\Big(2M \int_0^1 \|v\|_{H(s)}\,{\rm d}s\Big).
\]
By the Rayleigh-quotient characterization, each eigenvalue $\lambda$ of $H_0^{-1/2}H_1H_0^{-1/2}$ lies in
\[
    \lambda\in\bigg[\exp\Big(-2M \int_0^1 \|v\|_{H(s)}\,{\rm d}s\Big),\exp\Big(2M \int_0^1 \|v\|_{H(s)}\,{\rm d}s\Big)\bigg].
\]
This, together with $H_0 = \nabla^2 h(y)$ and $H_1= \nabla^2 h(x)$, gives 
\begin{equation}\label{eq:verify self 1}
    \cR_h(x,y) 
    \;\leq\;
    \exp\Big(2M \int_0^1 \|v\|_{H(s)}\,{\rm d}s\Big) - 1.
\end{equation}
Note that 
$\nabla h(x) - \nabla h(y)
    = \int_0^1 H(s) v\,{\rm d}s$, then
\[
    v^\top\big(\nabla h(x) - \nabla h(y)\big)
    = \int_0^1 v^\top H(s) v\,{\rm d}s
    = \int_0^1 \|v\|_{H(s)}^2\,{\rm d}s.
\]
On one hand, the condition $\rho_h(x,y)\leq \delta$ implies 
\[
    \int_0^1 \|v\|_{H(s)}^2\,{\rm d}s = v^\top\big(\nabla h(x) - \nabla h(y)\big)
    \;\le\;
    \rho_h(x,y)\,\|v\|
    \leq \delta\,\|v\|.
\]
On the other hand, $\mu$-strong convexity of $h$, together with $\rho_h(x,y)\leq \delta$, yields
$
\|v\| = \|x-y\| \leq \rho_h(x,y)/\mu \leq \delta/\mu$. 
Therefore, $ \int_0^1 \|v\|_{H(s)}^2\,{\rm d}s \leq \delta^2/\mu$. By Jensen's inequality,
\[
    \int_0^1 \|v\|_{H(s)}\,{\rm d}s
    \;\leq \Big(\int_0^1 \|v\|_{H(s)}^2\,{\rm d}s\Big)^{1/2} \leq \delta/\sqrt{\mu}.
\]
Combining this bound with \eqref{eq:verify self 1}, we obtain
\[
   \hspace{2cm}\cR_h(x,y) 
    \;\leq\;
    \exp\!\bigg(2M \int_0^1 \|v\|_{H(s)}\,{\rm d}s\bigg) - 1
    \;\leq\;
    \exp\!\bigg(\frac{2M}{\sqrt{\mu}}\,\delta\bigg) - 1.
\]
\end{proof}

\subsection{Derivation of HRUC modulus of power kernels}\label{appendix:power kernel}
\begin{proof}
The gradient and Hessian of $h:\Rd\to\R$ are
\[
\nabla h(x)= (\mu+\|x\|^r)\cdot x \quad \text{and} \quad \nabla^2 h(x)= \big(\mu+\|x\|^r\big) \cdot  I + r\|x\|^{r-2}xx^\top.
\]
Moreover, by $\nabla^2 h(y) \succeq (\mu+\|y\|^r) I$, we have
\[   \cR_h(x,y)  \leq \frac{1}{\mu+\|y\|^r}\cdot \big\|\nabla^2 h(x) - \nabla^2 h(y)\big\|,\quad \forall\,x,y\in\Rd.
\]
Introduce the curve $x(s):=(1-s)\cdot y + s \cdot x$ with $s\in[0,1]$, then
for every $r>0$, the map $s\mapsto\nabla^2h(x(s))$ is absolutely continuous. Thus,
\begin{equation}\label{eq:power kernel 2}
    \cR_h(x,y)  \leq \frac{1}{\mu+\|y\|^r} \int_0^1\Big\|\frac{{\rm d}\, \nabla^2 h(x(s))}{{\rm d}s}\Big\|\;{\rm d}s \overset{\text{(i)}}{\leq} \frac{1}{\mu+\|y\|^r} \int_0^1\big\|\nabla^3 h(x(s)) [x-y] \big\|\;{\rm d}s,
\end{equation}
where (i) is by chain rule ${\rm d} \nabla^2 h(x(s))/{\rm d}s = \nabla^3 h(x(s)) \,[\dot x(s)] = \nabla^3 h(x(s)) [x-y]$ for almost every $s\in[0,1]$
and $\nabla^3 h(x)[\cdot]$ is the directional derivative of $\nabla^2 h(x)$ at $x$. Next, consider
\begin{equation*}
\begin{aligned}
&\hspace{4.5mm}\big\|\nabla^3 h(x)[u] \big\| \\ &= \Big\| r\|x\|^{r-2}(x^\top u)I + r(r-2)\|x\|^{r-4}(x^\top u) xx^\top + r\|x\|^{r-2}(ux^\top + x u^\top)\Big\|    \\
&\leq \big(3r+|r(r-2)|\big)\|x\|^{r-1} \|u\|.
\end{aligned} \end{equation*}
Using this estimate, we can further write \eqref{eq:power kernel 2} as
\begin{equation}\label{eq:power kernel 3}
    \cR_h(x,y)  \leq \frac{\big(3r+|r(r-2)|\big)\|x-y\|}{\mu+\|y\|^r} \cdot \int_{0}^1 \|x(s)\|^{r-1} {\rm d}s.
\end{equation}
\noindent 
\textbf{Case I: $r> 1$.} Indeed, $\rho_h(x,y)\leq \delta$ implies $\|x-y\| \leq \delta/\mu$ and $\|x(s)\| = \|y+s(x-y)\| \leq \|y\|+\delta/\mu$. If $y=0$, then $\int_0^1\|x(s)\|^{r-1}{\rm d}s=\|x\|^{r-1}/r$, and \eqref{eq:power kernel 3} yields the claimed bound. Below we assume $\|y\|>0$. Then, \eqref{eq:power kernel 3} becomes
\begin{equation}\label{eq:power kernel 4}
\cR_h(x,y)  \leq \frac{(r^2+3r)\delta}{\mu}\cdot \frac{(\|y\| + \delta/\mu)^{r-1}}{\|y\|^r+\mu} 
\leq \frac{4\delta r^2 }{\mu}\cdot 2^{r-1} \bigg(\frac{1}{\|y\|+\mu/\|y\|^{r-1}} + \frac{\delta^{r-1}}{\mu^r}\bigg). 
\end{equation}
By Young's inequality $a^p/p+b^q/q \geq ab$ for all $a,b\geq 0,p,q\geq 1$ and $1/p+1/q=1$, set $p=r/(r-1)$, $q=r$, it holds for all $t>0$ that 
\[
t+ \mu/t^{r-1} \geq  \frac{\big(t^{1/p}\big)^p}{p} + \frac{\big(\mu^{1/q}/t^{(r-1)/q}\big)^q}{q} \geq \mu^{1/r}.
\] 
Substituting $t=\|y\|$ in the above estimate,
and merging the obtained bound into \eqref{eq:power kernel 4} leads to $$\cR_h(x,y)  \leq \frac{2^{r+1}r^2}{\mu}\Big(\frac{\delta}{\mu^{1/r}}+\frac{\delta^r}{\mu^r}\Big).$$


\noindent 
\textbf{Case II: $0< r\leq  1$.} To upper bound $\int_{0}^1 \|x(s)\|^{r-1} \,{\rm d}s$, we introduce
\[
z:=x-y\quad \text{and} \quad v:=\frac{x-y}{\|x-y\|}.
\]
By Schwarz's inequality, then $\|x(s)\| = \|y + sz\|\cdot \|v\| \geq | \langle y+sz,v\rangle  | = | \langle y,v\rangle + s\|z\| |$.
With this bound, the integral is further written as
\begin{equation*}
\int_{0}^1 \|x(s)\|^{r-1} {\rm d}s \leq \int_0^1 \big| \langle y,v\rangle + s\|z\| \big|^{r-1} {\rm d}s = \|z\|^{r-1} \cdot \int_{\beta}^{ \beta +1} |t|^{r-1} {\rm d}t\qquad \text{with}\quad  \beta:=\frac{\langle y,v\rangle}{\|z\|}    
\end{equation*}
after changing variable $t := s + \beta$. Since $\int_{\beta}^{ \beta +1} |t|^{r-1} {\rm d}t \leq 2\int_0^1 t^{r-1} {\rm d}t = \frac{2}{r},$ we obtain 
$
\int_{0}^1 \|x(s)\|^{r-1} {\rm d}s \leq \frac{2}{r}\|z\|^{r-1}$, 
 which, combined with \eqref{eq:power kernel 3}, leads to 
\[
\cR_h(x,y)
\leq \frac{[3r+r(2-r)]\|z\|}{\mu+\|y\|^r}\cdot\frac{2\|z\|^{r-1}}{r}
\leq \frac{10\|z\|^r}{\mu}
\leq \frac{10\delta^r}{\mu^{r+1}}.
\]
In view of \textbf{Case I \& II}, we obtain a uniform upper bound when $r>0$, i.e., \[
\cR_h(x,y) \leq \zeta(\delta):=\frac{\max\{10, r^2 2^{r+1}\}}{\mu}\Big(\frac{\delta}{\mu^{1/r}}+\frac{\delta^r}{\mu^r}\Big). \]
This completes the proof.
\end{proof}

\section{Preparatory tools}
\begin{lemma}[Three points identity \cite{chen1993convergence}] 
\label{lem:three point}
Let $h: \mathbb{R}^d \to (-\infty,+\infty]$ be a kernel function associated with $\mathcal{Z}$. For any
$x \in \dom{h}$, and $y,z \in \idom{h}$, we have
\begin{equation*}
\cD(x,z) - \cD(x,y) - \cD(y,z) = \langle\nabla h(y) - \nabla h(z), x-y\rangle.    
\end{equation*}
\end{lemma}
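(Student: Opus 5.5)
The plan is to expand both sides directly from the definition $\cD(u,v)=h(u)-h(v)-\langle\nabla h(v),u-v\rangle$. The hypotheses $y,z\in\idom{h}$ guarantee that $\nabla h(y)$ and $\nabla h(z)$ exist, so all three divergences appearing in \Cref{lem:three point} are well defined. The hypothesis $x\in\dom{h}$ guarantees that $h(x)$ is finite. No convexity, Legendre property or HRUC regularity is needed: the identity is purely algebraic.

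Concretely, I will write out
\begin{align*}
\cD(x,z)&=h(x)-h(z)-\langle\nabla h(z),x-z\rangle,\\
\cD(x,y)&=h(x)-h(y)-\langle\nabla h(y),x-y\rangle,\\
\cD(y,z)&=h(y)-h(z)-\langle\nabla h(z),y-z\rangle,
\end{align*}
and form $\cD(x,z)-\cD(x,y)-\cD(y,z)$.

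Collecting terms, the function values cancel completely: $h(x)-h(z)-h(x)+h(y)-h(y)+h(z)=0$.

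The inner-product terms that remain are
\[
-\langle\nabla h(z),x-z\rangle+\langle\nabla h(y),x-y\rangle+\langle\nabla h(z),y-z\rangle.
\]
Merging the two terms involving $\nabla h(z)$ gives $-\langle\nabla h(z),x-y\rangle$. Adding this to $\langle\nabla h(y),x-y\rangle$ yields $\langle\nabla h(y)-\nabla h(z),x-y\rangle$, which is the claimed right-hand side.

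There is no genuine obstacle here. The only point requiring care is the domain bookkeeping: $x$ may lie on the boundary of $\dom{h}$, so no gradient at $x$ may be used. The expansion above never invokes $\nabla h(x)$, so the identity holds under exactly the stated hypotheses.
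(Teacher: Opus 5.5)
Your proposal is correct: expanding the three divergences from the definition, the function values cancel and the two $\nabla h(z)$ terms merge into $-\langle \nabla h(z), x-y\rangle$, which gives exactly the right-hand side. The paper gives no proof of its own and simply cites \cite{chen1993convergence}, where the argument is this same direct expansion; your remark that $\nabla h(x)$ is never used is the right reason why $x$ may lie anywhere in $\dom{h}$, including its boundary.
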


\begin{lemma}\label{lem:matrix norm bound}
Given symmetric matrices $A,B \succ 0$ and the constant $\alpha>0$. If $\|B^{-\frac12}AB^{-\frac12}-I\| \leq \alpha$, then $\|A^{\frac12}B^{-\frac12}\|\leq \sqrt{1+\alpha}$.
\end{lemma}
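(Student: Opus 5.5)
The plan is to use the identity $\|A^{\frac12}B^{-\frac12}\|^2=\|(A^{\frac12}B^{-\frac12})^\top(A^{\frac12}B^{-\frac12})\|$ and reduce the claim to a spectral bound on a symmetric matrix. Since $A,B\succ0$ are symmetric, their square roots are symmetric, so $(A^{\frac12}B^{-\frac12})^\top A^{\frac12}B^{-\frac12}=B^{-\frac12}AB^{-\frac12}$. By the C$^*$-identity for the spectral norm, $\|M\|^2=\|M^\top M\|$ for any real matrix $M$, we obtain
\[
\big\|A^{\frac12}B^{-\frac12}\big\|^2=\big\|B^{-\frac12}AB^{-\frac12}\big\|.
\]

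Next, we bound the right-hand side. The matrix $C:=B^{-\frac12}AB^{-\frac12}$ is symmetric and positive definite, so $\|C\|=\lambda_{\max}(C)$. The hypothesis $\|C-I\|\leq\alpha$ says that every eigenvalue $\lambda$ of $C$ satisfies $|\lambda-1|\leq\alpha$, because $C-I$ is symmetric and has eigenvalues $\lambda-1$. Hence $\lambda_{\max}(C)\leq1+\alpha$. Alternatively, the triangle inequality gives $\|C\|\leq\|I\|+\|C-I\|\leq1+\alpha$ directly.

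Combining the two steps gives $\|A^{\frac12}B^{-\frac12}\|^2\leq1+\alpha$. Taking square roots yields the statement.

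There is no real obstacle here. The only point requiring care is the transpose computation in the first step, which relies on the symmetry of the principal square roots; that symmetry holds because $A$ and $B$ are symmetric positive definite. This is also the identity the paper uses implicitly in \eqref{eqn:H-rel-diff}.
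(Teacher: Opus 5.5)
Your proof is correct and follows the paper's argument: the paper likewise uses $\|A^{\frac12}B^{-\frac12}\|^2=\|B^{-\frac12}AB^{-\frac12}\|$ and then bounds the right-hand side by $1+\alpha$ directly from the hypothesis $\|B^{-\frac12}AB^{-\frac12}-I\|\leq\alpha$. Your explicit transpose computation and the eigenvalue/triangle-inequality justification simply fill in the steps the paper states without detail.
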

\begin{proof}
    Indeed, $\|A^{\frac12}B^{-\frac12} \|^2 = \|B^{-\frac12} A B^{-\frac12}\|$. Since $\|B^{-\frac12}AB^{-\frac12}-I\| \leq \alpha$, we have $\|B^{-\frac12}AB^{-\frac12}\|\leq 1+\alpha$. Thus, $\big\|A^{\frac12}B^{-\frac12} \big\|^2 \leq 1+\alpha$.
\end{proof}

\section{Proof of technical lemmas}
\subsection{Proof of \texorpdfstring{\Cref{lem:first descent}}{Lemma \getrefnumber{lem:first descent}}}\label{proof of lem:first descent}
\begin{proof}
By the update \eqref{eq:update bar z} of $\bar z^t=\nabla h(\bar x^t)$ and the three points identity (\Cref{lem:three point}), we get 
\[\langle \bar s^t, \bar x^{t+1} - \bar x^t \rangle = \langle \nabla h(\bar x^t) - \nabla h(\bar x^{t+1}) , \bar x^{t+1} - \bar x^t \rangle = - \cD(\bar x^{t+1},\bar x^t) - \cD(\bar x^{t},\bar x^{t+1}),\]
which, along with the extended descent \eqref{eq:extended descent}, gives
    \begin{equation}
        \label{eq:descent 1}
          \begin{aligned}
        f(\bar x^{t+1}) & \leq f(\bar x^t) + \langle \nabla f(\bar x^t) , \bar x^{t+1} - \bar x^t \rangle + \sL \cdot \cD(\bar x^{t+1},\bar x^t)\\
        &\leq f(\bar x^t) + \langle \nabla f(\bar x^t) - \sL \bar s^t, \bar x^{t+1} - \bar x^t \rangle - \sL \cdot \cD(\bar x^t,\bar x^{t+1}).
    \end{aligned}
    \end{equation}
    For any vector $v\in\Rd$, note that $\bar x^t=\nabla h^*(\bar z^t)$ and $\bar z^{t+1} = \bar z^t - \bar s^t$, by mean value theorem, there exists a constant $\theta_t\in[0,1]$ such that
    \begin{equation}\label{eq:mean value of bar x} 
       \langle v, \bar x^{t+1} - \bar x^t \rangle = \langle v,\nabla h^*(\bar z^{t+1}) - \nabla h^*(\bar z^{t}) \rangle  = -v^\top\big[\nabla^2 h^*(\bar z^t - \theta_t \bar s^t)\big]\,\bar s^t. 
    \end{equation}  
    Substituting $v=\nabla f(\bar x^t) - \sL \bar s^t$ into \eqref{eq:mean value of bar x} and
   denoting $\bar z_\theta^t:= \bar z^t - \theta_t \bar s^t$ and $ H_{\theta_t}^*:=\nabla^2 h^*(\bar z_\theta^t)$, then we can express the inner product term in \eqref{eq:descent 1} as
   \begin{equation}
        \label{eq:inner product 1}
   \begin{aligned}
     \langle \nabla f(\bar x^t) - \sL \bar s^t, \bar x^{t+1} - \bar x^t \rangle &= 
       -\langle \nabla f(\bar x^t), \bar s^t \rangle_{H_{\theta_t}^*} +  \sL \|\bar s^t\|^2_{H_{\theta_t}^*}\\
       &\leq -\big \langle \nabla f(\bar x^t), \tfrac1m {\textstyle \sum}_{i=1}^m \, s_i^t \big \rangle_{H_{\theta_t}^*} + \frac{\sL}{m}{\sum}_{i=1}^m \|s_i^t\|^2_{H_{\theta_t}^*}.
   \end{aligned}
   \end{equation}
   The rest of proof is to bound the following inner product:
   \begin{equation}
        \label{eq:inner product 2}
        \begin{aligned}
           & \hspace{5mm} -\big \langle \nabla f(\bar x^t), \tfrac1m {\textstyle \sum}_{i=1}^m \, s_i^t \big \rangle_{H_{\theta_t}^*} \\
           &= - \frac1m {\sum}_{i=1}^m 
            \big\langle (\nabla f(\bar x^t) - \bar y^t) + (\bar y^t - y_i^t) + y_i^t , s_i^t \big\rangle_{H_{\theta_t}^*} \\
            &= \underbracket[.3mm][2mm]{- \frac1m {\sum}_{i=1}^m \langle y_i^t , s_i^t\rangle_{H_{\theta_t}^*}}_{T_1} + \underbracket[.3mm][2mm]{\frac1m {\sum}_{i=1}^m \langle y_i^t - \bar y^t, s_i^t \rangle_{H_{\theta_t}^*}}_{T_2} + \underbracket[.3mm][2mm]{\langle \bar y^t - \nabla f(\bar x^t), \bar s^t \rangle_{H_{\theta_t}^*}}_{T_3}.
        \end{aligned}
    \end{equation}
    First, we bound $T_1$. Since $s_i^t=\clip(y_i^t)$, we have $y_i^t=s_i^t\cdot \max\{1/\eta,\|y_i^t\|/\delta\}$. Thus, 
    \begin{equation}
        \label{eq:bound T1}
        \langle y_i^t, s_i^t \rangle_{H_{\theta_t}^*} \geq \frac1\eta \cdot \|s_i^t\|^2_{H_{\theta_t}^*} \qquad \Longrightarrow \qquad T_1 \leq -\frac{1}{m\eta} \cdot {\sum}_{i=1}^m\|s_i^t\|^2_{H_{\theta_t}^*}.
    \end{equation}
We directly bound $T_2$ via Cauchy-Schwarz inequality:
    \begin{equation}
        \label{eq:bound T2}
        T_2 \leq \frac{1}{2m \alpha_1} {\sum}_{i=1}^m\|y_i^t - \bar y^t\|^2_{H_{\theta_t}^*} + \frac{\alpha_1}{2m} {\sum}_{i=1}^m\|s_i^t\|^2_{H_{\theta_t}^*}.
    \end{equation}
    To bound $T_3$, we invoke the update $\by^{t+1} = W \by^t + \nabla \bff(\bx^{t+1}) - \nabla \bff(\bx^{t})$, together with the double stochasticity of $W$, the average $\bar y^t = \frac{1}{m} (\by^t)^\top \one$, and the fact that $[\nabla \bff(\bx^t)]^\top \one = \sum_{i=1}^m \nabla f_i(x_i^t)$, this yields
    \begin{equation*} \begin{aligned}
        &\hspace{1.2cm}\bar y^{t+1} = \bar y^t + \frac{1}{m}{\sum}_{i=1}^m \nabla f_i(x_i^{t+1}) - \frac{1}{m}{\sum}_{i=1}^m \nabla f_i(x_i^{t}) \\[2mm]
        &\Longleftrightarrow \quad \bar y^t - \frac{1}{m}{\sum}_{i=1}^m \nabla f_i(x_i^{t}) = \cdots = \bar y^0 - \frac{1}{m}{\sum}_{i=1}^m \nabla f_i(x_i^{0}).
    \end{aligned} \end{equation*}
    Since $\bar y^0 = \frac{1}{m}{\sum}_{i=1}^m \nabla f_i(x_i^{0})$, we have $\bar
    y^t = \frac{1}{m}{\sum}_{i=1}^m \nabla f_i(x_i^{t})$. Hence, $T_3$ can be expressed as:
    \begin{equation*} \begin{aligned}
        T_3 &= \frac{1}{m} {\sum}_{i=1}^m \langle \nabla f_i(x_i^t) - \nabla f_i(\bar x^t), \bar s^t \rangle_{H_{\theta_t}^*}\\
        &\leq \frac{1}{2m\alpha_2} {\sum}_{i=1}^m \|\nabla f_i(x_i^t) - \nabla f_i(\bar x^t)\|_{H_{\theta_t}^*}^2 + \frac{\alpha_2}{2}\|\bar s^t\|_{H_{\theta_t}^*}^2\\
        &\leq \frac{1}{2m\alpha_2} {\sum}_{i=1}^m \|\nabla f_i(x_i^t) - \nabla f_i(\bar x^t)\|_{H_{\theta_t}^*}^2 + \frac{\alpha_2}{2m} {\sum}_{i=1}^m\|s^t_i\|_{H_{\theta_t}^*}^2.
    \end{aligned} \end{equation*}
    Applying \Cref{lem:dual Lipcshitz} and the bounds $\|\bar z^t-\bar z_{\theta_t}^t\|\leq\delta$ and $\|z_i^t-\bar z_{\theta_t}^t\|\leq\|z_i^t-\bar z^t\|+\|\bar z^t-\bar z_{\theta_t}^t\|\leq\frac{\sqrt{20m}}{1-\rho}\delta$ from \Cref{lem:iterates bounds}, we have 
    \begin{equation*} 
        \|\nabla f_i(x_i^t) - \nabla f_i(\bar x^t)\|_{H_{\theta_t}^*}^2 
        \leq \sL^2 \lambda^2 \cdot  \|\bar z^t - z_i^t\|_{H_{\theta_t}^*}^2\qquad 
    \forall\, i\in[m].
    \end{equation*} 
    Therefore, the term $T_3$ can be bounded by
    \begin{equation}
        \label{eq:bound T3}
    T_3 \leq \frac{\sL^2\lambda^2}{2m\alpha_2} {\sum}_{i=1}^m \|\bar z^t - z_i^t\|_{H_{\theta_t}^*}^2 + \frac{\alpha_2}{2m} {\sum}_{i=1}^m\|s^t_i\|_{H_{\theta_t}^*}^2.
    \end{equation}
   Combining the estimates \eqref{eq:inner product 1}--\eqref{eq:bound T3}, we obtain
   \begin{equation*} \begin{aligned}
        \langle \nabla f(\bar x^t) - \sL \bar s^t, \bar x^{t+1} - \bar x^t \rangle
       \leq &-\Big(\frac{1}{\eta}-\frac{2\sL+\alpha_1+\alpha_2}{2}\Big)\cdot \frac{1}{m}{\sum}_{i=1}^m\|s^t_i\|_{H_{\theta_t}^*}^2 \\&+ \frac{1}{2m \alpha_1} {\sum}_{i=1}^m\|y_i^t - \bar y^t\|^2_{H_{\theta_t}^*} + \frac{\sL^2\lambda^2}{2m\alpha_2} {\sum}_{i=1}^m \|\bar z^t - z_i^t\|_{H_{\theta_t}^*}^2.
   \end{aligned} \end{equation*}
Merging the above estimate into \eqref{eq:descent 1}, we thus complete the proof.
\end{proof}

\subsection{Proof of \texorpdfstring{\Cref{lem:consensus errors}}{Lemma \getrefnumber{lem:consensus errors}}}\label{proof of lem:consensus errors}
\begin{proof}
Given the bound \eqref{eqn:H-rel-diff}, applying \Cref{lem:contraction} with
    \[
    \bv=\bz^t,\quad  \bv^+=\bz^{t+1},\quad \bu=-W\bs^t,\quad H=H^*_{\theta_t} \quad  \text{and} \quad  H^+=H_{\theta_{t+1}}^*,
    \]
    we obtain the following bound 
    \[
    {\sum}_{i=1}^m \|\bar z^{t+1} - z_i^{t+1}\|_{H_{\theta_{t+1}}^*}^2 \leq \rho \cdot {\sum}_{i=1}^m \|\bar z^{t} - z_i^{t}\|_{H_{\theta_{t}}^*}^2 + \frac{3}{1-\rho} \cdot {\sum}_{i=1}^m\|(W\bs^t)_i\|_{H_{\theta_t}^*}^2.
    \]
Note that 
\begin{equation}
\label{eq:bound of Ws}
   {\sum}_{i=1}^m\|(W\bs^t)_i\|_{H_{\theta_t}^*}^2 = \Big\|W\bs^t\big[H^*_{\theta_t}\big]^{1/2}\Big\|_F^2 \leq \|W\|^2 \cdot \Big\|\bs^t\big[H^*_{\theta_t}\big]^{1/2}\Big\|_F^2 = {\sum}_{i=1}^m
\|s^t_i\|_{H_{\theta_t}^*}^2,
\end{equation}
where the inequality is by $\|AB\|_F \leq \|A\| \|B\|_F$ and the last equality is due to $\|W\|=1$ inferred from symmetric double stochasticity. 
Based on \eqref{eq:bound of Ws}, we obtain the bound
\begin{equation}\label{eq:consensus z}
         {\sum}_{i=1}^m \|\bar z^{t+1} - z_i^{t+1}\|_{H_{\theta_{t+1}}^*}^2 
 \leq \rho \cdot {\sum}_{i=1}^m \|\bar z^{t} - z_i^{t}\|_{H_{\theta_{t}}^*}^2 + \frac{3}{1-\rho} \cdot {\sum}_{i=1}^m
\|s^t_i\|_{H_{\theta_t}^*}^2.
\end{equation}  
Again, invoking \Cref{lem:contraction} with
    \[
    \bv=\by^t,\quad  \bv^+=\by^{t+1},\quad \bu=\nabla \bff(\bx^{t+1}) - \nabla \bff(\bx^{t}),\quad H=H^*_{\theta_t} \quad  \text{and} \quad  H^+=H_{\theta_{t+1}}^*,
    \]
    which, together with $\nabla \bff(\bx)=[(\nabla f_1(x_1))^\top,\cdots,(\nabla f_m(x_m))^\top]^\top$, yields
  \begin{equation}
        \label{eq:consensus bound for y 1}
        \begin{aligned}
       &\hspace{5mm} {\sum}_{i=1}^m\|y_i^{t+1} - \bar y^{t+1}\|^2_{H_{\theta_{t+1}}^*}\\ 
       &\leq \rho  {\sum}_{i=1}^m\|y_i^t - \bar y^t\|^2_{H_{\theta_t}^*} + \frac{3}{1-\rho}\cdot {\sum}_{i=1}^m \|\nabla f_i(x_i^{t+1})-\nabla f_i(x_i^t)\|_{H^*_{\theta_t}}^2\\
       &\leq \rho  {\sum}_{i=1}^m\|y_i^t - \bar y^t\|^2_{H_{\theta_t}^*} + \frac{3\sL^2\lambda^2}{1-\rho}\cdot {\sum}_{i=1}^m \|z_i^{t+1} - z_i^t\|_{H^*_{\theta_t}}^2,
    \end{aligned}
    \end{equation}
    where the last line is due to dual Lipschitz bound \Cref{lem:dual Lipcshitz} and the bound from \Cref{lem:iterates bounds}, i.e.,
    \[
    \|z_i^\tau-\bar z_{\theta_t}^t\|\leq\|z_i^\tau-\bar z^\tau\|+\|\bar z^\tau-\bar z_{\theta_t}^t\|\leq\frac{\sqrt{3m}}{1-\rho}\delta+\delta\leq\frac{\sqrt{20m}}{1-\rho}\delta,\qquad \tau\in\{t,t+1\}.
    \]
    From the update $\bz^{t+1}  = W (\bz^t -\bs^t)$, we deduce
    \begin{equation*} \begin{aligned}
        {\sum}_{i=1}^m \|z_i^{t+1} - z_i^t\|_{H^*_{\theta_t}}^2 &= \Big\|(\bz^{t+1} - \bz^t)\cdot  \big[H^*_{\theta_t}\big]^{1/2}\Big\|^2_F\\
        &=\Big\|\big[(W-I) \bz^t - W\bs^t \big]\cdot \big[H^*_{\theta_t}\big]^{1/2}\Big\|^2_F\\
        &\overset{\text{(i)}}{=}\Big\|\big[(W-I) (\bz^t - \one (\bar z^t)^\top) - W\bs^t \big] \cdot \big[H^*_{\theta_t}\big]^{1/2}\Big\|^2_F\\
        &\overset{\text{(ii)}}{\leq} \frac{4}{3}\, \Big\|(W-I) (\bz^t - \one (\bar z^t)^\top) \cdot \big[H^*_{\theta_t}\big]^{1/2} \Big\|^2_F + 4 \Big\|W\bs^t \big[H^*_{\theta_t}\big]^{1/2} \Big\|^2_F\\
        &\overset{\text{(iii)}}{\leq} \frac{16}{3} \, {\sum}_{i=1}^m \|\bar z^{t} - z_i^{t}\|_{H_{\theta_{t}}^*}^2 + 4\, {\sum}_{i=1}^m
    \|s^t_i\|_{H_{\theta_t}^*}^2,
    \end{aligned} \end{equation*}
    where (i) uses the fact that $(W-I) \cdot  \one(\bar z^t)^\top=0$, (ii) is by $\|A+B\|_F^2 \leq (1+\varepsilon)\|A\|_F^2 + (1+\varepsilon^{-1})\|B\|_F^2$ with $\varepsilon=1/3$, and (iii) invokes the estimate \eqref{eq:bound of Ws} and the relations $\|AB\|_F \leq \|A\|\|B\|_F$ and $\|W-I\|\leq 2$. Plugging the bound of $\sum_{i=1}^m \|z_i^{t+1} - z_i^t\|_{H^*_{\theta_t}}^2$ into \eqref{eq:consensus bound for y 1}, then
    \begin{equation}
        \label{eq:consensus bound for y 2}
        \begin{aligned}
    {\sum}_{i=1}^m\|y_i^{t+1} - \bar y^{t+1}\|^2_{H_{\theta_{t+1}}^*} \leq  &\;\rho \, {\sum}_{i=1}^m\|y_i^t - \bar y^t\|^2_{H_{\theta_t}^*} \\& + \frac{16\sL^2\lambda^2}{1-\rho} {\sum}_{i=1}^m \|\bar z^{t} - z_i^{t}\|_{H_{\theta_{t}}^*}^2 + \frac{12\sL^2\lambda^2}{1-\rho} {\sum}_{i=1}^m\|s^t_i\|_{H_{\theta_t}^*}^2.
        \end{aligned}
     \end{equation}
Multiplying \eqref{eq:consensus z} by $\xi=32\sL^2\lambda^2/(1-\rho)^2$ and summing it with \eqref{eq:consensus bound for y 2}, we get 
\begin{equation*} \begin{aligned}
     &{\sum}_{i=1}^m \Big(\|y_i^{t+1} - \bar y^{t+1}\|^2_{H_{\theta_{t+1}}^*} + \xi \|\bar z^{t+1} - z_i^{t+1}\|_{H_{\theta_{t+1}}^*}^2 \Big)\\[1mm] \leq & \frac{1+\rho}{2} \cdot {\sum}_{i=1}^m \Big(\|y_i^t - \bar y^t\|^2_{H_{\theta_t}^*} + \xi\|\bar z^{t} - z_i^{t}\|_{H_{\theta_{t}}^*}^2\Big) + \frac{108\sL^2\lambda^2}{(1-\rho)^{3}} \cdot  {\sum}_{i=1}^m\|s^t_i\|_{H_{\theta_t}^*}^2.
\end{aligned} \end{equation*}
By definition, $\cE_t=\frac1m {\sum}_{i=1}^m (\|y_i^t - \bar y^t\|^2_{H_{\theta_t}^*} + \xi\|\bar z^{t} - z_i^{t}\|_{H_{\theta_{t}}^*}^2)$. Therefore,
\begin{equation*}
      \cE_{t+1} \leq \frac{1+\rho}{2}\cdot \cE_t + \frac{108\sL^2\lambda^2}{(1-\rho)^{3}} \cdot \frac1m{\sum}_{i=1}^m\|s^t_i\|_{H_{\theta_t}^*}^2,
\end{equation*}
which completes the proof.
\end{proof}

\subsection{Proof of \texorpdfstring{\Cref{prop:descent}}{Lemma \getrefnumber{prop:descent}}}\label{appendix:proof of prop:descent}
\begin{proof}
    Setting $\alpha_1=16\sL/(1-\rho)$ and $\alpha_2=(1-\rho)\sL/2$ in \Cref{lem:first descent}, we have 
    \begin{equation}\label{eq:prop descent}
    f(\bar x^{t+1}) \leq  f(\bar x^t) - \sL \cD(\bar x^t,\bar x^{t+1}) -\bigg(\frac{1}{\eta}-\frac{9.25\,\sL}{1-\rho}\bigg)\cdot \frac{1}{m}{\sum}_{i=1}^m\|s^t_i\|_{H_{\theta_t}^*}^2 + \frac{1-\rho}{32\sL}\cdot \cE_t. 
    \end{equation}
    Next we invoke the consensus bound from \Cref{lem:consensus errors}. Multiplying \eqref{eq:consensus E} by $1/(8\sL)$, combining the result with \eqref{eq:prop descent}, and rearranging terms yield the sufficient descent:
     \begin{equation*} \begin{aligned}
       \cM_{t+1}&=f(\bar x^{t+1}) + \frac{1}{8\sL} \cdot \cE_{t+1}\\[1mm]
       &\leq 
       f(\bar x^t) - \bigg\{\frac{1}{\eta}-\bigg[\frac{9.25}{1-\rho} + \frac{13.5\lambda^2}{(1-\rho)^{3}}\bigg]\sL \bigg\}\cdot \frac{1}{m}{\sum}_{i=1}^m\|s^t_i\|_{H_{\theta_t}^*}^2 - \sL \cdot \cD(\bar x^t,\bar x^{t+1}) \\&\hspace{1.3cm}+ \Big(\frac{1}{8\sL} - \frac{1-\rho}{32 \sL} \Big) \cdot \cE_t\\[1mm]
       &\leq \cM_t - \bigg[\frac{1}{\eta} - \frac{22.75\cdot \lambda^2\sL}{(1-\rho)^{3}} \bigg]\cdot \frac{1}{m}{\sum}_{i=1}^m\|s^t_i\|_{H_{\theta_t}^*}^2 - \sL \cdot \cD(\bar x^t,\bar x^{t+1}) - \frac{1-\rho}{32\sL}\cdot \cE_t,
    \end{aligned} \end{equation*}
    where the last inequality holds by substituting the definition of $\cM_t$ and invoking $1/(1-\rho)\geq 1$ and $\lambda \geq 1$. Finally, we complete the proof by noting that 
    \begin{equation*}
        0<\eta \leq \frac{(1-\rho)^{3}}{25\lambda^2\sL} \qquad \Longrightarrow \qquad \frac1\eta - \frac{22.75\cdot \lambda^2\sL}{(1-\rho)^{3}} \geq \frac{1}{12\eta}. 
     \end{equation*}
\end{proof}

\subsection{Proof of \texorpdfstring{\Cref{prop:finite many clip}}{Proposition \getrefnumber{prop:finite many clip}}}
\label{appendix:proof of prop:finite many clip}
\begin{proof}
According to \Cref{prop:descent}, the sequence $\{\cM_t\}$ is non-increasing, meaning that $\cM_t \leq \cM_0$ for all $t\in\N$. Since $\cE_t \geq 0$, it holds that \begin{equation*}  f(\bar x^t) \leq f(\bar x^t) + \frac{\cE_t}{8\sL}  = \cM_t \leq \cM_0 = f(x^0){}+\frac{\cE_0}{8\sL}. \end{equation*} 
Therefore, the sequence $\{\bar z^t\}=\{\nabla h(\bar x^t)\}\subseteq$ $\overline{\mathrm{lev}_0}:=\{\nabla h(x):f(x) \leq f(x^0){}+\cE_0/(8\sL)\}$, which is bounded according to \Cref{as:bounded level}. Denote $\bar z^t_\theta:=(1-\theta)\bar z^t + \theta \bar z^{t+1}$, then $\{\bar z^t_\theta\}$ is bounded for any $\theta\in[0,1]$. Recall that $H_{\theta_t}^*=\nabla^2 h^*(\bar z^t_{\theta_t})$ with some $\theta_t\in[0,1]$. Fix $\hat z\in\mathrm{Im}(\nabla h)$, set $\widehat{H}^*:=\nabla^2h^*(\hat z)$, and choose $R<\infty$ such that $\|\bar z^t_{\theta_t}-\hat z\|\leq R$ for all $t\in\N$. 

Let $\bar{x}_{\theta_t}:=\nabla h^*(\bar z^t_{\theta_t})$ and $\hat x:=\nabla h^*(\hat z)$. Then,
$\rho_h(\bar{x}_{\theta_t},\hat x)\leq R$. By the HRUC comparison \eqref{eqn:HRUC-Hessian-comparison}, we have
$\nabla^2h(\bar{x}_{\theta_t})
\preceq (1+\zeta(R))\cdot \nabla^2h(\hat x)
$. 
Legendre duality gives $H_{\theta_t}^*=[\nabla^2h(\bar{x}_{\theta_t})]^{-1}$ and $\widehat{H}^*=[\nabla^2h(\hat x)]^{-1}$. Thus, it holds for all $t\in\N$ that $\widehat{H}^*\preceq \big(1+\zeta(R)\big) H_{\theta_t}^*$, implying
\begin{equation}
    \label{eq:prop Hessian bound}
    \mu^* I \preceq H_{\theta_t}^*\qquad \text{where}\qquad \mu^*:=\frac{\lambda_{\min}(\widehat{H}^*)}{1+\zeta(R)}.
\end{equation}
Clearly, $\max_{i\in[m]}\{\|s_i^t\|\}=\delta$ for all $t\in{\cal T}$. Then, it follows from \eqref{eq:prop:descent} and \eqref{eq:prop Hessian bound} that  
$$|{\cal T}|\cdot\frac{\mu^*\delta^2}{12m\eta}\leq \frac{\mu^*}{12\eta}\cdot \sum_{t\in\mathcal{T}}\frac{1}{m}{\sum}_{i=1}^m\|s^t_i\|^2\leq \cM_0-\underline{f}=f(x^0)-\underline{f}{}+\frac{\cE_0}{8\sL},$$
which suggests that $|{\cal T}|\leq \frac{12m\eta}{\mu^*\delta^2}\cdot(f(x^0)-\underline{f}{}+\cE_0/(8\sL))<+\infty$. This characterizes the finite constant $T_{\rm clip}$. 
\end{proof}




\bibliographystyle{siam}
\bibliography{references}

\end{document}